\numberwithin{equation}{section}
\newtheorem{theorem}{Theorem}[section]
\newtheorem{proposition}[theorem]{Proposition}
\newtheorem{lemma}[theorem]{Lemma}
\newtheorem{remark}[theorem]{Remark}
\def\e{{\varepsilon}}
\def\l{{\lambda}}
\def\a{{\alpha}}
\def\de{\partial}
\newcommand{\R}{\mathbb{R}}
\newcommand{\N}{\mathbb{N}}
\newcommand{\Z}{\mathbb{Z}}
\newif\ifcomment \commentfalse
\def\commentON{\commenttrue}
\long\outer\def\BC#1\EC{\ifcomment \sloppy \par \# \ldots\dotfill
{\em #1} \dotfill \# \par \fi } \commentON
\newcommand{\remove}[1]{}
\def\sideremark#1{\ifvmode\leavevmode\fi\vadjust{\vbox to0pt{\vss% the remark
 \hbox to 0pt{\hskip\hsize\hskip1em%                          will appear only
 \vbox{\hsize2.1cm\tiny\raggedright\pretolerance10000%          on the side
  \noindent #1\hfill}\hss}\vbox to15pt{\vfil}\vss}}}%
\definecolor{cadmiumgreen}{rgb}{0.0, 0.42, 0.24}
\definecolor{darkgreen}{rgb}{0.0, 0.5, 0.2}
\definecolor{purple}{rgb}{0.5, 0.0, 0.5}
\newcommand{\taglia}{\color{cyan}}
\title[monotonicity under symmetry]{A monotonicity result under symmetry and Morse index constraints in the plane}
\author[F.~Gladiali]{Francesca Gladiali}
\thanks{This work was supported by Gruppo Nazionale per l'Analisi Matematica, la Probabilit\`a e le loro Applicazioni (GNAMPA) of the Istituto Nazionale di Alta Matematica (INdAM), by Prin-2015KB9WPT and by Fabbr.}
\address{Dipartimento di Chimica e Farmacia, Universit\`a di Sassari, via Piandanna 4, 07100 Sassari, Italy. \texttt{fgladiali@uniss.it}}
\begin{document}
\maketitle 
\begin{abstract}
%We consider solutions of semilinear elliptic equations of the type
This paper deals with solutions of semilinear elliptic equations of the type
\[
\left\{\begin{array}{ll}
-\Delta u = f(|x|, u) \qquad & \text{ in } \Omega, \\
u= 0 & \text{ on } \partial \Omega,
\end{array} \right.
\]
where $\Omega$ is a radially symmetric domain of the plane that can be bounded or unbounded. We consider solutions $u$ that are invariant by rotations of a certain angle $\theta$ and which have a bound on their Morse index in spaces of functions invariant by these rotations. We can prove that or $u$ is radial, or, else, there exists a direction $e\in \mathcal S$ such that $u$ is symmetric with respect to $e$ and it is strictly monotone in the angular variable in a sector of angle $\frac{\theta}2$. \\
The result applies to least-energy and nodal least-energy solutions in spaces of functions invariant by rotations and produces multiplicity results. 
\end{abstract}
{\small
\noindent{{{\bf{Keywords:}} semilinear elliptic equations, symmetry and monotonicity results, bounded and unbounded
domains, linearized equation, first eigenvalue and first eigenfunction, Morse index.}}}

\section{Introduction}
In this paper we study symmetry and monotonicity properties of classical solutions of semilinear elliptic problems of the type 
\begin{equation}\label{P}
-\Delta u = f(|x|, u) \qquad \  \text{ in } \Omega, 
\end{equation}
where $\Omega$ is a radially symmetric domain of $\R^2$
which can be bounded, in which case $\Omega$ is either a ball $B$ or an annulus centered at the origin, or 
%$\Omega$ 
can be unbounded in which case either $\Omega=\R^2$ or $\Omega=\R^2\setminus B$. When $\Omega$ is a ball $B$, an annulus or an exterior domain $\R^2\setminus B$ we also assume that $u$ satisfies Dirichlet boundary conditions
\begin{equation}\label{eq:bc}
u=0 \qquad \  \text{ on } \partial\Omega. 
\end{equation}
Throughout the paper 
$f:\bar \Omega\times \R\to \R$ is (locally) a $C^{0,\a}$-function whose first derivative with respect the second variable, that we denote hereafter $f'(|x|,s):=\frac{\partial f}{\partial s}(|x|,s)$, belongs to $C^{0,\a}$. \\
When $\Omega$ is a ball and $f$ is nonicreasing in the radial variable,
%does not depend on $|x|$, or when $f$ is decreasing in $|x|$, 
positive solutions to \eqref{P} and \eqref{eq:bc} are radially symmetric by the well known results of \cite{GNN}
where the moving plane method has been employed. Similar results hold also when $\Omega=\R^2$ at least under some decay assumption at infinity, see \cite{GNN2} or under some summability conditions as in \cite{CL}. 
However, when $\Omega$ is not convex
or when $f$ depends increasingly on the radial variable or when $u$ is a sign changing solution, the symmetry of all solutions does not hold anymore and indeed, in each of these cases, there are examples where radial and nonradial solutions coexist. 
We quote here the seminal paper \cite{SSW} dealing with positive solutions of the H\'enon problem, where $f(|x|,s)=|x|^\a s^p$ for $\a>0$ and $p>1$ in a ball, and it is proved that least energy solutions, namely solutions which minimize the energy functional, are nonradial provided $\a$ is large enough. 
 This symmetry breaking result is stated in dimension $N\geq 3$, but it holds also in the plane
as one can see in the examples in Section \ref{se:6}. Nonradial positive solutions have been found also in an annulus
and one can see the papers \cite{BN}, \cite{C}, \cite{Li} and \cite{GGPS}.
%This last paper deals with the Lane-Emden problem, namely $f(|x|,u)=|u|^{p-1}u$ for $p>1$ and shows the existence of infinitely many positive nonradial solutions using the bifurcation theory. 
Also least energy nodal solutions are nonradial, when $\Omega$ is bounded as proved in \cite{AP}.  
%In both example the existence of nonradial solutions is linked to the fact that the Morse index of the radial positive solution becomes very hight.
\\
Nevertheless in some situations, or for a certain class of solutions, it is natural to expect that solutions inherit some of the symmetry of the domain, even if $\Omega$ is not convex, $u$ changes sign and $f$ is increasing in the radial variable. This is indeed the case for solutions of low Morse index, under some convexity assumption on $f$ that we shall make clear very soon, and it has been proved in \cite{Pacella} and \cite{PW} when $\Omega$ is bounded, and in \cite{GPW} when $\Omega$ is unbounded, that every solutions to \eqref{P} (and possibly \eqref{eq:bc}) of low Morse index is axially symmetric with respect to an axis passing through the
origin and nonincreasing in the polar angle from this axis, i.e. only depends on $r = |x|$ and $\theta=\arccos(x\cdot p)$, for a certain unit vector $p$, and $u$ is nonincreasing in $\theta$. This kind of symmetry 
is often called foliated Schwarz symmetry. See also the papers \cite{DP}, \cite{DGP1} and \cite{DGP2} where some extensions to the case of systems are considered.
\\ 
The foliated Schwarz symmetry for minimizers of certain variational problems has been obtained in \cite{SW} for positive solutions and in \cite{BWW} in the case on nodal solutions, using a completely different method based on 
symmetrization techniques.\\
Let us recall that the Morse index of a solution $u$ to \eqref{P} is the maximal dimension of a subspace 
of 
$C^1_0(\Omega)$ in which the quadratic form 
\[Q_{u,\Omega}(v,v):=\int_{\Omega}|\nabla v|^2-f'(|x|,u)v^2\ dx\]
 associated to the linearized operator
\[L_u(v):=-\Delta v -f'(|x|,u)v\]
is negative defined. Here and in the following by $f'(|x|,u)$ we mean $\frac{\partial f}{\partial s}(|x|,s) $.
%When $L_u$ is compact the Morse index of $u$  is the number of the negative eigenvalues of $L_u$ (with Dirichlet boundary conditions when $\Omega$ has a boundary) counted with multiplicity.

\

In this paper we are interested in solutions which admit some invariances, namely they belong to suitable symmetric spaces, in the case when $\Omega$ is contained in the plane. Inspiring to the previous depicted papers \cite{PW} and \cite{GPW} we consider solutions which have low Morse index in these symmetric spaces and we prove, that under some convexity assumptions on $f$ or they are radial or they 
inherit only one extra-symmetry among the ones they can possess. This extra-symmetry of low Morse index solutions is what we think is the right generalization to the foliated Schwarz symmetry to this symmetric setting.
\\
Entering the details let us explain which type of symmetric spaces we are concern with. 
To this end, for any angle $\psi$ we denote by $R_\psi$ the rotation of angle $\psi$ in counterclockwise direction centered at the origin and by $\mathcal G_\psi$ the subgroup of $SO(2)$ generated by $R_\psi$. In particular we consider angles $\psi=\frac {2\pi}k$ with $k\in \N$, $k\geq 1$, so that $\mathcal G_{\frac {2\pi}k}$ is a proper subgroup of $SO(2)$. 
We say that 
a function $u$ defined in $\Omega$ is $k$-invariant if it satisfies
\begin{equation}\label{k-inv}
u(x)=u\left(g(x)\right) \ \ \text{ for all }x\in \Omega, \ \ \text{ for every } g\in \mathcal G_{\frac {2\pi}k}
\end{equation}
Next we denote by
$m_k(u)$ the $k$-invariant Morse index of $u$, namely the 
maximal dimension of a subspace of $C^1_{0,k}$, i.e. the subspace of $C^1_0(\Omega)$ given by functions that satisfy \eqref{k-inv},
in which the quadratic form $Q_{u,\Omega}$ is negative defined.\\ 

\begin{figure}[t!]
\includegraphics[scale=2]{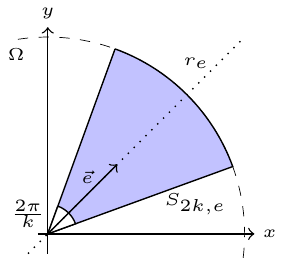}
\quad \quad \quad \quad \quad
\includegraphics[scale=2]{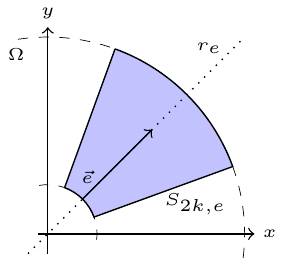}
\caption{The sector $S_{2k,e}$ when $\Omega$ is a ball or when $\Omega$ is an annulus}
\label{fig1}
\end{figure}

\

Since we are considering functions that satisfy \eqref{k-inv} it is enough to know them in any sector of $\Omega$ of angle $\frac{2\pi}k$. Then, for any direction $e\in\mathcal S$, $\mathcal S$ being the unit sphere, we denote by $S_{2k,e}$ the sector of $\Omega$ of angle $\frac {2\pi}k$, centered in the origin which has the straight line of direction $e$ passing through the origin, that we call $r_e$, as symmetry axis and lies in the halfplane $x\cdot e>0$, see Fig. \ref{fig1}.\\
In order to state our results we also need to introduce the two semisectors $S_{k,e}^+$ and $S_{k,e}^-$ that cover $S_{2k,e}$ and which are the part of $S_{2k,e}$ on one side of $r_e$ and on the other side respectively, see  Fig. \ref{fig2} and see Section \ref{se:2} where they are defined in a rigorous way.

\begin{figure}[t!]
\includegraphics[scale=2]{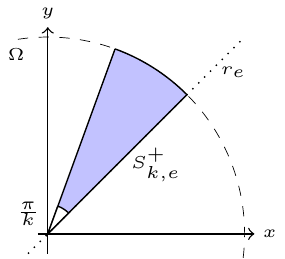}
\quad \quad \quad \quad \quad
\includegraphics[scale=2]{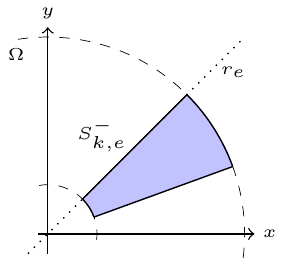}
\caption{The sector $S_{k,e}^+$ when $\Omega$ is a ball and the sector $S_{k,e}^-$ when $\Omega$ is an annulus}
\label{fig2}
\end{figure}

\

Now we can state the main results of this paper.
In particular we can prove the following result for solutions of low Morse index, in the case when the nonlinear term $f$ is convex in the second variable:
\begin{theorem}\label{teo:1}
Assume $\Omega$ is bounded. 
Let $u$
%\in H^1_{0,k}$ 
be a solution to \eqref{P} and \eqref{eq:bc} that satisfies \eqref{k-inv} and such that 
\[m_k(u)\leq 2.\]
Suppose $f(|x|,s)$ is  convex in the $s$-variable. 
Then, or $u$ is radial or else there exists a direction $e\in \mathcal S$ such that 
$u$ is symmetric with respect to $e$ in the sector $S_{2k,e}$ and
it is strictly monotone in the angular variable in the sectors $S_{k,e}^+$ and $S_{k,e}^-$.
\end{theorem}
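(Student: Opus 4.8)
The plan is to read the extra symmetry off the angular derivative of $u$ and to use convexity to locate the symmetry axis. Working in polar coordinates $(r,\theta)$, set $v:=\partial_\theta u$. Since $f$ depends on $x$ only through $|x|$ and the Laplacian commutes with rotations, differentiating \eqref{P} in $\theta$ gives $L_u(v)=0$ in $\Omega$; moreover $v$ is $k$-invariant because $u$ is, and $v=0$ on $\partial\Omega$ since $u\equiv0$ there, so its tangential derivative vanishes. Thus $v\in C^1_{0,k}$ lies in the kernel of the $k$-invariant linearized operator. If $v\equiv0$ then $u$ depends only on $r$ and we are in the radial alternative, so from now on assume $v\not\equiv0$. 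Being $k$-invariant, $v$ is $\tfrac{2\pi}{k}$-periodic in $\theta$ and has zero mean over one period (as $u$ is periodic), hence it changes sign; let $2m$, with $m\ge1$, be the number of its nodal rays in one period.

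The first step is a Morse-index lower bound in terms of $m$. On each nodal domain $D$ of $v$ the function $|v|$ is a positive first Dirichlet eigenfunction of $L_u$ with eigenvalue $0$, so $Q_{u,\Omega}(v\,\mathbf 1_{D},v\,\mathbf 1_{D})=0$; grouping the $k$ rotated copies of each nodal domain produces $2m$ $k$-invariant, disjointly supported, pairwise $Q$-orthogonal functions $\phi_1,\dots,\phi_{2m}$ with $Q_{u,\Omega}(\phi_i,\phi_i)=0$. By strict monotonicity of the first Dirichlet eigenvalue under domain inclusion, fusing two adjacent rotated families of nodal domains across their common nodal ray produces a function with strictly negative quadratic form; arranging these fusions along the cyclically ordered nodal domains yields $2m-1$ independent directions on which $Q_{u,\Omega}$ is negative definite, so $m_k(u)\ge 2m-1$. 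The hypothesis $m_k(u)\le2$ then forces $2m-1\le2$, i.e. $m=1$: the angular derivative $v$ has exactly one ray $r^+$ where it changes from positive to negative, an angular-maximum ray, and one ray $r^-$ where it changes sign back, per period. Consequently $u$ is strictly monotone in $\theta$ on each of the two open arcs between consecutive nodal rays.

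It remains to identify these two rays with the edges of a semisector and to prove the reflection symmetry, and this is where convexity enters. Let $e$ be the unit vector along the angular-maximum ray $r^+$, let $\sigma_e$ be the reflection across the line $r_e$, and set $w:=u-u\circ\sigma_e$. Since $u$ is $k$-invariant and $\sigma_e R_{\frac{2\pi}{k}}=R_{-\frac{2\pi}{k}}\sigma_e$, the function $u\circ\sigma_e$ is again $k$-invariant, hence so is $w$, and $w$ is antisymmetric across $r_e$. From $-\Delta w=f(|x|,u)-f(|x|,u\circ\sigma_e)$ and the convexity of $f$ in $s$ one obtains the pointwise bound $f(|x|,u)-f(|x|,u\circ\sigma_e)\le f'(|x|,u)\,w$, that is $L_u(w)\le0$; testing against $w^+\ge0$ gives $Q_{u,\Omega}(w^+,w^+)\le0$, with $w^+\in C^1_{0,k}$. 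The plan is then to show $w\equiv0$: if not, $w^+\not\equiv0$ contributes, via the first Dirichlet eigenfunction of $L_u$ on the open set $\{w>0\}$, a negative $k$-invariant direction independent of those already built from the nodal domains of $v$, pushing $m_k(u)$ beyond $2$ and contradicting the hypothesis. Hence $u\circ\sigma_e=u$, i.e. $u$ is symmetric with respect to $e$; together with $m=1$ this pins the two nodal rays of $v$ at $r_e$ and at the opposite edge of the sector $S_{2k,e}$, so the two monotonicity arcs are exactly $S^+_{k,e}$ and $S^-_{k,e}$.

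Finally the monotonicity is upgraded to the strict statement: $v=\partial_\theta u$ has constant sign on each nodal domain by construction, so it does not vanish in the interior of either semisector, and the Hopf lemma at the nodal rays $r_e$ and $\partial S_{2k,e}$ completes the strict monotonicity on $S^+_{k,e}$ and on $S^-_{k,e}$. I expect the delicate point to be the Morse-index bookkeeping of the third paragraph: one must guarantee that the negative direction produced by $w^+$ is genuinely linearly independent from the directions extracted from the nodal domains of $v$, and that the borderline case $Q_{u,\Omega}(w^+,w^+)=0$, in which $w^+$ would be a nontrivial first eigenfunction with eigenvalue $0$, is ruled out. Here convexity and the strict domain-monotonicity of the principal eigenvalue, rather than mere sign information, are what make the count strict and close the argument.
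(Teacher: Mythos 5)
There are several genuine gaps here, and two of them are fatal. First, your second paragraph rests on the unproven assumption that the nodal set of $v=\partial_\theta u$ consists of rays, so that its nodal domains are angular sectors. Nothing forces this: $v$ is merely a solution of $L_u v=0$, and its zero set can a priori be an arbitrary collection of curves; without knowing the nodal domains are sectors you can neither speak of ``nodal rays'' nor identify the regions of angular monotonicity with the semisectors $S^{\pm}_{k,e}$, which is precisely what the theorem asserts. Second, and independently, the Morse-index bookkeeping does not close. With $m=1$ your fusion argument produces one genuinely negative $k$-invariant direction, i.e. $m_k(u)\ge 1$; if $w\not\equiv 0$ produced one further independent negative direction you would only reach $m_k(u)\ge 2$, which is perfectly compatible with the hypothesis $m_k(u)\le 2$. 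So even granting every claim you make, there is no contradiction, and the symmetry $u\circ\sigma_e=u$ is not established. Moreover the ``independence'' is not available for free: the support of $w^+$ overlaps the nodal domains of $v$, so the cross terms $Q_{u,\Omega}(w^+,\phi_i)$ do not vanish, and $Q_{u,\Omega}(w^+,w^+)\le 0$ gives only a nonpositive, not a negative, direction. Your closing remark that the borderline case $Q_{u,\Omega}(w^+,w^+)=0$ ``is ruled out'' is an assertion, not an argument; in fact this case cannot be ruled out, and the correct move is to exploit it.

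For comparison, the paper resolves exactly these points by different means. The spectral information is extracted not from nodal domains of $u_\theta$ but from the first Dirichlet eigenfunctions $\varphi_e^{\pm}$ of $L_u$ on the two semisectors: the combination $\xi_e=A_e\varphi^+_e-B_e\varphi^-_e$ is $L^2$-orthogonal to the first eigenfunction $\varphi_1$ by the choice of $A_e,B_e$, and an intermediate-value argument in the angle $\psi$ (using $\tilde\xi_{\pi/k}=-\tilde\xi_0$) yields a direction $e'$ where $\tilde\xi_{e'}$ is also orthogonal to $\varphi_2$; then $m_k(u)\le 2$ forces $Q_{u,\Omega}(\tilde\xi_{e'},\tilde\xi_{e'})\ge 0$, hence $\lambda_1(L_u,S^{+}_{k,e'})\ge 0$ or $\lambda_1(L_u,S^{-}_{k,e'})\ge 0$ (Proposition 3.2). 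Convexity is then used much as you use it, but the borderline case is embraced rather than dismissed: either $w_{e'}\equiv 0$, or $w_{e'}^{-}$ is a first eigenfunction and hence $w_{e'}$ has a strict sign (Lemma 3.1); in the latter case a rotating-direction argument (Proposition 2.1) --- entirely absent from your proposal, and not replaceable by simply choosing $e$ along an ``angular-maximum ray'' --- moves the direction until $w\equiv 0$ with $\lambda_1\ge 0$ preserved. Only then does the $u_\theta$ argument with which you begin (Proposition 2.2) yield radial symmetry or strict angular monotonicity. To salvage your approach you would need to supply a replacement for the unproven ray structure of the nodal set of $u_\theta$, and a mechanism, such as the paper's orthogonality-plus-intermediate-value device, that makes the index count actually exceed $2$.
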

 
\

Also in the case when the nonlinear term $f$ has a first derivative convex in the second variable we can prove an analogous result: 
\begin{theorem}\label{teo:2}
Assume $\Omega$ is bounded. 
Let $u$ be a solution to \eqref{P} and \eqref{eq:bc} that satisfies \eqref{k-inv} and such that 
%Let $u\in H^1_{0,k}$ be a solution to \eqref{P} that satisfies 
\[m_k(u)\leq 2.\]
Suppose $f(|x|,s)$ has a convex derivative $f'(|x|,s)=\frac{\partial f}{\partial s}(|x|,s)$.
Then, or $u$ is radial or else there exists a direction $e\in \mathcal S$ such that 
$u$ is symmetric with respect to $e$ in the sector $S_{2k,e}$ and
it is strictly monotone in the angular variable in the sectors $S_{k,e}^+$ and $S_{k,e}^-$.
\end{theorem}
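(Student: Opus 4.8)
The plan is to argue exactly as in the proof of Theorem \ref{teo:1}, the convexity of $f'$ replacing that of $f$ only at the step where one controls the sign of $Q_{u,\Om}$ on a reflected test function. First I would dispose of the radial case. Since $f$ depends on $x$ only through $|x|$, differentiating \eqref{P} in the angular variable shows that the angular derivative $\de_\theta u$ solves the linearized equation $L_u(\de_\theta u)=0$; it is $k$-invariant and, being the tangential derivative of a function that is constant on each boundary circle, vanishes on $\de\Om$, so $\de_\theta u\in C^1_{0,k}$. If $\de_\theta u\equiv0$ then $u$ is radial and we are in the first alternative; assume henceforth $\de_\theta u\not\equiv0$.

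Next, for a direction $e\in\cS$ let $\s_e$ denote the reflection across the line $r_e$ and put $w_e:=u-u\circ\s_e$. Then $w_e$ is $k$-invariant and antisymmetric with respect to $\s_e$ (hence vanishes on $r_e$), and since $|x|$ is $\s_e$-invariant a mean value computation gives
\[
-\D w_e=c_e(x)\,w_e,\qquad c_e(x)=\int_0^1 f'\!\big(|x|,(1-t)\,u\circ\s_e+t\,u\big)\,dt,
\]
where $c_e$ is itself symmetric under $\s_e$. Integrating by parts yields
\[
Q_{u,\Om}(w_e,w_e)=\int_\Om\big(c_e(x)-f'(|x|,u)\big)\,w_e^2\,dx,
\]
and grouping each point with its $\s_e$-image the integrand becomes a multiple of $2c_e-f'(|x|,u)-f'(|x|,u\circ\s_e)$. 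Here the hypothesis of Theorem \ref{teo:2} enters: $c_e$ is the mean of $f'(|x|,\cdot)$ over the segment joining $u\circ\s_e$ and $u$, so the convexity of $s\mapsto f'(|x|,s)$ together with the Hermite--Hadamard inequality gives $2c_e\le f'(|x|,u)+f'(|x|,u\circ\s_e)$ and therefore $Q_{u,\Om}(w_e,w_e)\le0$ for every $e\in\cS$. This is the exact counterpart of the one-sided estimate $Q_{u,\Om}(w_e^+,w_e^+)\le0$ supplied by the convexity of $f$ in Theorem \ref{teo:1}, and from here the two proofs run in parallel.

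It then remains to locate the symmetry direction and to upgrade the sign information to strict monotonicity. For each $e$ let $\l_1^a(e)$ be the first eigenvalue of $L_u$ among the $k$-invariant functions antisymmetric with respect to $\s_e$; since either $w_e$ or, when $w_e\equiv0$, the function $\de_\theta u$ is an admissible test function with $Q_{u,\Om}\le0$, we have $\l_1^a(e)\le0$ for all $e$. The bound $m_k(u)\le2$ restricts how negative the antisymmetric spectrum can become as $e$ rotates through a fundamental sector, and a continuity argument yields a direction $e$ with $\l_1^a(e)=0$ for which $u\circ\s_e=u$, i.e. $u$ is symmetric with respect to $r_e$ in $S_{2k,e}$. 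For this $e$ the operator $L_u$ commutes with $\s_e$, the antisymmetric subspace is $L_u$-invariant, and $\de_\theta u$ is an antisymmetric null-eigenfunction; the Morse index bound forces it to be a first antisymmetric eigenfunction, hence of one sign on each semisector. The strong maximum principle and the Hopf lemma then give $\de_\theta u>0$ in $S_{k,e}^+$ and $\de_\theta u<0$ in $S_{k,e}^-$ (or the reverse), which is the asserted strict angular monotonicity.

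The hard part is the passage from the pointwise bound $Q_{u,\Om}(w_e,w_e)\le0$ to an exact symmetry axis. Because the functions $w_e$ do not have disjoint supports, the mixed terms $Q_{u,\Om}(w_e,w_{e'})$ obstruct a naive ``three negative directions'' count against $m_k(u)\le2$, and this must be replaced by a careful continuity or degree analysis of the map $e\mapsto\l_1^a(e)$. A second delicate point is that $f'$ is assumed convex but not strictly so, so the equality case of the Hermite--Hadamard inequality has to be examined in order to exclude a non-symmetric $u$ for which $Q_{u,\Om}(w_e,w_e)=0$ while $w_e\not\equiv0$.
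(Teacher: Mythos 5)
Your setup is correct as far as it goes: your Hermite--Hadamard estimate is exactly the paper's inequality \eqref{eq:confronto-V} ($V_e\le V_{es}$), the disposal of the radial case is fine, and your final step (a vanishing antisymmetric function $u_\theta$ that, when nonzero, must be a first eigenfunction and hence of one sign) is essentially Proposition \ref{prop-2.2}. But the core of the theorem is missing. You write that ``a continuity argument yields a direction $e$ with $\l_1^a(e)=0$ for which $u\circ\sigma_e=u$'', and you yourself flag in your last paragraph that this is the hard part; no such argument is supplied, and the two obstructions you name are fatal to the route you sketch. First, since the $w_e$ for different $e$ are neither disjointly supported nor $Q$-orthogonal, the family of inequalities $Q_{u,\Om}(w_e,w_e)\le 0$ produces no contradiction with $m_k(u)\le 2$ by itself. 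Second, even at a direction where $\l_1^a(e)=0$, your only proposed tool for concluding $w_e\equiv 0$ is the equality case of Hermite--Hadamard; but when $f'$ is affine in $s$ (e.g. $f$ quadratic, perfectly admissible under the hypothesis ``$f'$ convex''), that inequality is an identity for \emph{every} $e$, symmetric or not, so it carries no information and this step collapses.

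The paper's mechanism, which is what you would need to reproduce, is structurally different. Instead of testing with $w_e$ itself, one splits it into the two reflected parts $w_e^1=w_e^+\chi_{S_{k,e}^+}-w_e^-\chi_{S_{k,e}^-}$ and $w_e^2=-w_e^-\chi_{S_{k,e}^+}+w_e^+\chi_{S_{k,e}^-}$ of \eqref{eq:w1e2}: these are $\sigma_e$-symmetric, have \emph{disjoint supports}, and each satisfies $Q_{u,S_{2k,e}}(w_e^i,w_e^i)\le 0$ --- this is the one place where convexity of $f'$ enters, via \eqref{eq:confronto-V} and the $\sigma_e$-symmetry of $w_e^i$. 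Disjointness of supports is what allows the combination $\xi_e=A_ew_e^1-B_ew_e^2$ to stay $Q$-nonpositive while being made $L^2$-orthogonal to $\varphi_1$; an intermediate value argument ($h(\tfrac\pi k)=-h(0)$) then yields a direction $e'$ where $\tilde\xi_{e'}$ is orthogonal to $\varphi_2$ as well, so the Morse index bound forces $Q_{u,\Om}(\tilde\xi_{e'},\tilde\xi_{e'})=0$; hence $\tilde\xi_{e'}$ solves $L_u\tilde\xi_{e'}=0$, and the truncated function \eqref{hat-xi} --- again a weak solution because $\de\xi_{e'}/\de\nu=0$ on $\Gamma_{2,e'}$ --- violates unique continuation. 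This is how sign-changing $w_e$ is excluded with no strict convexity whatsoever. Moreover, your proposal misses the second case the paper must treat (the set $S_*$ of \eqref{eq:defS} nonempty): when $u$ is already symmetric about some $r_\psi$ but $\l_1(L_u,S_{k,\psi}^+)<0$, the whole argument has to be rerun at the bisecting direction $\psi+\frac{\pi}{2k}$, with odd extensions $\tilde g_\psi$, $\tilde g_{\psi+\pi/(2k)}$ of semisector eigenfunctions replacing $\varphi_2$ in the orthogonality count; this is where $m_k(u)\le 2$ is exploited a second time. Without these ingredients what you have is a correct reduction plus an acknowledged conjecture, not a proof.
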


\

To our knowledge this is the first general result in this direction, i.e. in showing that solutions, constraint with some invariances, when they are nonradial, inherit a symmetry axis in their free sector $S_{2k,e}$, but they repulse additional symmetries, favoring the strict angular monotonicity in their free semisectors $S_ {k,e}^\pm$. It seems that this symmetry in $S_{2k,e}$ and monotonicity in $S_{k,e}^\pm$ is the exact generalization of the foliated Schwarz symmetry to the case of $k$-invariant functions or to the case of
sectors.\\
Let us explain the difficulties of the achievements. As in the previous papers the convexity assumptions on $f$ are needed in order to compare the quadratic form $Q_u$ with the quadratic form associated with the equation satisfied by the difference of two solutions to \eqref{P}. In particular we compare a solution $u$ with its reflection with respect to $r_e$ in the sector $S_{2k,e}$ and we move the direction $e$. 
\\
Next, the assumption on the $k$-Morse index of $u$ allows to say 
that at least in one of the semisectors $S_{k,e}^+$, for a suitable direction $e$, the first eigenvalue of the linearized operator $L_u$ is nonnengative. This implies that the angular derivative of $u$ in this sector $S_{k,e}^+$ or it is zero or it has a sign. However  it is possibile to prove that the first eigenvalue is nonnegative only when $u$ is symmetric with respect to $r_e$ in $S_{2k,e}$ so that the solution $u$ when nonradial admits this extra-symmetry. 
%{\F This last step is due to the construction of a continuous functions $f:\R \to \R$ which changes sign due to the invariances of $u$.}
\\
Let us note in particular that the proof of Theorem \ref{teo:2} is technically complicated when $m_k(u)=2$ and also the sectors of amplitude $\frac{\pi}{2k}$ play an important role.

\

As one can see in the applications in Section \ref{se:6} the assumption $m_k(u)\leq 2$ is not very restrictive
and allows to treat the case of solutions of variational problems which minimizes the energy functional associated with \eqref{P} constraint either 
on the Nehari manifold or on the nodal Nehari manifold in the case of solutions that change sign. 
So also nodal solutions can be considered. According to the previous results in \cite{PW} and \cite{GPW} we believe moreover that this assumption should be optimal and  that allowing a higher $k$-Morse index would produce more symmetries in the sectors $S_{2k,e}$.\\
Note that the convexity assumption of Theorem \ref{teo:1} is satisfied by the exponential nonlinearities of Gelfand type, i.e., $f(r,s)=\l V(r)e^s$ and, for $s>0$ by the Lane-Emden type, i.e. $f(r,s)=|s|^{p-1}s$ for $p>1$ and by their extension of H\'enon type, namely $f(r,s)=r^\a e^s$ and $f(r,s)=r^\a |s|^{p-1}s$ for $\a>0$. \\
%positive solutions of the Lane-Emden problem, where $f(|x|,s)=s^p$, with $p>1$, of the H\'enon problem, namely $f(|x|,s)=|x|^\a s^p$ for $\a>0$ and $p>1$ and by solutions of the {\F Gelfand} problem where $f(|x|,s)=e^s$ as well as its extension of H\'enon type. 
The convexity assumption in Theorem \ref{teo:2}, instead, allows to deal with nodal solutions to the Lane-Emden problem, for $p\geq 2$ and with positive or nodal solutions to the sinh-Poisson problem, namely when $f(r,s)=\e(e^s-e^{-s})$ as well as their extensions of H\'enon type.
%or H\'enon problem as well as nodal solutions of the H\'enon problem when $p>2$. 
Section \ref{se:6} provides a broad range of problems that satisfy the convexity assumptions and to which these topics can be applied and some multiplicity results are produced.

\

Finally in the case when the domain $\Omega$ is not bounded we can prove the same type of monotonicity results. 
\begin{theorem}\label{teo:3}
Assume $\Omega$ is unbounded. 
Let  $u$ be a solution to \eqref{P} and possibly \eqref{eq:bc} that satisfies \eqref{k-inv} and such that 
$|\nabla u|\in L^2(\Omega)$ and 
\[m_k(u)\leq 2.\]
Assume further 
that $f$ or $f'$ are convex in the second variable.  Then, or $u$ is radial or else there exists a direction $e\in \mathcal S$ such that 
$u$ is symmetric with respect to $e$ in $S_{2k,e}$ and
it is strictly monotone in the angular variable in the sectors $S_{k,e}^+$ and $S_{k,e}^-$.
\end{theorem}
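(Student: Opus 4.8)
\emph{Sketch of the argument.} The plan is to carry over, step by step, the architecture developed for Theorems~\ref{teo:1} and~\ref{teo:2} in the bounded case, inserting at each stage the decay control furnished by the hypothesis $|\na u|\in L^2(\Om)$. Fix $e\in\mathcal S$, let $\sigma_e$ denote the reflection across the axis $r_e$, and set $w_e:=u-u\circ\sigma_e$. This function is antisymmetric with respect to $r_e$; by \eqref{k-inv} it is determined by its restriction to the free sector $S_{2k,e}$, where it vanishes on $r_e$. By the mean value theorem $w_e$ solves a linear equation $-\Delta w_e=c_e(x)\,w_e$ in the semisector $S_{k,e}^+$, and in the region $\{w_e>0\}$ the convexity of $f$ gives the pointwise bound $c_e(x)\le f'(|x|,u)$. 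Testing the equation for $w_e$ against itself and using the antisymmetry then yields $Q_{u,\Om}(w_e,w_e)=\int(c_e-f'(|x|,u))\,w_e^2\le0$, with strict inequality whenever $f$ is strictly convex and $w_e\not\equiv0$. When only $f'$ is convex one replaces this single comparison by the more delicate device of Theorem~\ref{teo:2}, in which the auxiliary sectors of amplitude $\frac{\pi}{2k}$ intervene; the outcome is again that a nontrivial $w_e$ is a nonpositive direction for $Q_{u,\Om}$.

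Next I would exploit $m_k(u)\le2$. Each $w_e$ lies, after the necessary cutoff and completion described below, in the $k$-invariant form domain of $Q_{u,\Om}$, so if $u$ were symmetric across no axis $r_e$ then $w_e\not\equiv0$ for every $e$ and each would be a strictly negative direction. Rotating $e$ and using the antisymmetry of the $w_e$ together with the way they vary with the direction, one expects to exhibit a negative subspace of dimension greater than two, contradicting $m_k(u)\le2$. Hence some direction $e$ is a symmetry axis, i.e. $w_e\equiv0$ in $S_{2k,e}$, and moreover the same bound forces the first eigenvalue of $L_u$ on at least one semisector $S_{k,e}^+$, defined through the Rayleigh quotient on the completion of the compactly supported $k$-invariant functions, to be nonnegative.

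With the symmetry axis in hand, the monotonicity follows from the angular derivative. Differentiating \eqref{P} in $\theta$ shows $L_u(u_\theta)=0$ in $S_{k,e}^+$; the symmetry of $u$ across $r_e$, combined with \eqref{k-inv}, makes both bounding rays of $S_{k,e}^+$ axes of symmetry, so $u_\theta$ vanishes on these two rays, while on the remaining part of $\partial S_{k,e}^+$ it vanishes through \eqref{eq:bc} where $\partial\Om$ is present and is controlled at infinity by the finite-energy condition. Since the first eigenvalue there is nonnegative, $u_\theta$ is either identically zero---in which case $u$ depends only on $|x|$ and is radial---or it is a first eigenfunction and hence of one sign; the latter gives strict angular monotonicity in $S_{k,e}^+$, and by antisymmetry in $S_{k,e}^-$.

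The principal obstacle is to make all of the above valid on the unbounded $\Om$. The hypothesis $|\na u|\in L^2(\Om)$ is exactly what is required, but it must be deployed through cutoff functions and a limiting procedure: one has to check that the integrations by parts defining $Q_{u,\Om}(w_e,w_e)$ and the Dirichlet energy of $u_\theta$ carry no boundary contribution at infinity, and that the first eigenvalue of $L_u$ on the unbounded semisectors is well defined and that a sign-definite first eigenfunction exists, or is attained in the limit. Reconciling this analysis at infinity with the already delicate case $m_k(u)=2$ of Theorem~\ref{teo:2}---where the convexity of $f'$ and the sectors of angle $\frac{\pi}{2k}$ must be followed through the decay estimates---is where the substantive work lies.
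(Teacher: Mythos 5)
Your sketch reproduces the paper's high-level architecture (reflection differences $w_e$, convexity comparison of quadratic forms, Morse index bound, angular derivative), but the step in which you actually use $m_k(u)\le 2$ contains a genuine gap, and it is the heart of the proof. First, the inequality you assert under convexity of $f$ is not available: testing $-\Delta w_e=c_e w_e$ against $w_e$ gives $Q_{u,\Omega}(w_e,w_e)=\int(c_e-f'(|x|,u))w_e^2$, and the pointwise bound $c_e\le f'(|x|,u)$ holds only on $\{w_e>0\}$ (on $\{w_e<0\}$ it reverses); folding the integral onto one semisector by antisymmetry turns the integrand into $2c_e-f'(|x|,u(x))-f'(|x|,u(\sigma_e(x)))$, whose nonpositivity is exactly the Hermite--Hadamard inequality \eqref{eq:confronto-V} and therefore requires $f'$ convex, not $f$ convex. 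For $f$ convex the paper has only the differential inequality $L_uw_e\ge 0$, which must be tested against $w_e^-$ on a single semisector, as in Lemma \ref{lemma-fine-dim}. Second, and more fundamentally, even if every nontrivial $w_e$ satisfied $Q_{u,\Omega}(\tilde w_e,\tilde w_e)\le 0$, no contradiction with $m_k(u)\le 2$ would follow: semidefiniteness along individual vectors does not produce a subspace on which $Q_{u,\Omega}$ is negative definite, because the cross terms between different directions are uncontrolled. Indeed, for the nonradial solutions of Section \ref{se:6} (where $f'$ is convex) one has $w_e\not\equiv0$ and $Q_{u,S_{2k,e}}(w_e,w_e)\le 0$ for \emph{every} direction $e$ off the symmetry axis, while $m_k(u)=1$ or $2$; so the dichotomy ``symmetry axis, or too many negative directions'' cannot be the mechanism. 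What the paper actually does (Propositions \ref{prop-3.2}, \ref{prop-4.1} and their unbounded versions \ref{prop-5.3}, \ref{prop-5.4}) is build, for each $e$, two nonnegative functions with disjoint supports (the semisector eigenfunctions $\varphi_e^\pm$, resp.\ the reflected parts $w_e^1,w_e^2$ of \eqref{eq:w1e2}), combine them into $\xi_e$ chosen $L^2$-orthogonal to $\varphi_1$, use continuity in the angle and the identity $h(\frac\pi k)=-h(0)$ to find $e'$ with $\tilde\xi_{e'}$ orthogonal to $\varphi_2$ as well, and then play $Q_{u,\Omega}(\tilde\xi_{e'},\tilde\xi_{e'})\ge0$ (forced by $m_k(u)\le2$) against the opposite inequality coming from the construction; the output is either a nonnegative semisector eigenvalue or, via unique continuation applied to the function $\widehat\xi_{e'}$ of \eqref{hat-xi}, a contradiction. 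None of this appears in your sketch.

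Two further structural points. The symmetry axis is not produced by the Morse index bound directly, as you claim: the bound yields only ``$w_e\equiv0$ with nonnegative eigenvalue, or $w_e$ of one sign''; passing from a signed $w_e$ to an actual symmetry direction is the separate rotating argument of Propositions \ref{prop-2.1} and \ref{prop-5.2} (the supremum $\tilde\psi$ over rotated angles, strong maximum principle, Hopf lemma on $\Gamma_{2,\tilde\psi}$ and $\Gamma_{3,\tilde\psi}$, and in the unbounded case a splitting into $B_{R_1}$ and its complement handled with cut-offs, Lemma 2.3 of \cite{GPW} and unique continuation), which your proposal omits entirely. Finally, the difficulties you list in your closing paragraph --- nonexistence of first eigenfunctions on unbounded sectors, well-posedness of the infima of the forms, the precise role of $|\nabla u|\in L^2(\Omega)$, and the reduction to $\Omega\cap B_R$ where $L_u$ has exactly two negative $k$-invariant eigenvalues (via Lemma 2.9 of \cite{GPW}) --- are precisely the content of Propositions \ref{prop-5.1}--\ref{prop-5.4}; acknowledging them as ``where the substantive work lies'' without carrying them out leaves Theorem \ref{teo:3} essentially unproved.
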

Let us remark that very few results are available when $f$ depends on the radial varaibale, or when the solutions change sign or when the underlying domain is $\R^2\setminus B$ and we think that Theorem \ref{teo:3} is a first step in this direction. \\
When passing from the bounded to the unbounded case we have to take into account either the fact that the first eigenvalue and the first eigenfunction
is not defined anymore in an unbounded domain either that some of the functions constructed do not have the right sommability. Then we have to divide the problem considering first a bounded section of the sectors $S_{2k,e}$ and then looking at the unbounded part of the sectors. In this last issue it is important that, in a certain sense, the bound on the $k$-Morse index, means that the maximum principle should hold in the unbounded part.

\

Finally let us observe that we are confident that similar results should hold also in higher dimensions. Nevertheless while the $k$-invariance in \eqref{k-inv} is very natural for solutions in a radially symmetric domain of the plane, since they are the same invariances of the Spherical Harmonics, in higher dimension the Spherical Harmonics are far more complicated and many different invariances should be taken into account.

%{\F Finally we can obtain better results without any convexity assumption of the nonlinearity $f$ in the case when the solution $u$ is $k$-invariant and has $k$-Morse index zero. For $k=1$ solutions of this type are known as {\it stable}. Generalizing this notion to $k$-invariant functions we say that $u$ is {\it $k$-stable} if $m_k(u)=0$. 
%\begin{theorem}\label{teo:4}
%Let  $u$ be a solution to \eqref{P} and possibly \eqref{eq:bc} that satisfies \eqref{k-inv} and such that $m_k(u)=0$. Assume further that if $\Omega$ is unbounded $|\nabla u|\in L^2(\Omega)$. Then $u$ is radial.
%\end{theorem}
%As a corollary of this last result we have 
%\begin{corollary}\label{cor:5}
%Let  $u$ be a solution to \eqref{P} that satisfies \eqref{k-inv}  in $\R^2$, such that $m_k(u)=0$. Assume $f$ does not depend on $|x|$. Then $u$ is constant.
%\end{corollary}}
%\edz{ripensare perch\'e se \'e $k$-stabile allora \'e anche stabile}

\

\

\section{Notations and preliminary results}\label{se:2}
In this section we introduce all the notations we need to prove the main Theorems and we give some preliminary results in the case when $\Omega$ is bounded.
Let us denote by $e\in \mathcal S$ any direction, $e=e_\psi=(\cos\psi,\sin\psi)$  with $\psi\in[0,2\pi)$ whose orthogonal vector is given by $e_{\Tiny\perp}:=(-\sin \psi, \cos\psi)$, and by $r_e$ or $r_{\psi}$ the straight line passing through the origin of direction $e$, namely $r_e:=x\cdot e_{\Tiny\perp}=0$.\\

Let $\sigma_e:\R^2\to \R^2$ be the reflection with respect the line $r_e$, i.e. $\sigma_e(x):=x-2(x\cdot e_{\Tiny\perp})e_{\Tiny\perp}$ for every $x\in \Omega$ and, if $u$ is any solution to \eqref{P} we let
\begin{equation}\label{eq:difference}
w_e(x):=u(\sigma_e(x))-u(x)
\end{equation}
the difference between the reflection of $u$ with respect to $r_e$ and $u$.\\
Since we are interested in solutions with some invariances, as explained in the Introduction, 
for any angle $\psi$ we denote by $R_\psi$ the rotation of angle $\psi$ in counterclockwise direction centered at the origin and by $\mathcal G_\psi$ the subgroup of $SO(2)$ generated by $R_\psi$. In particular we consider angles $\psi=\frac {2\pi}k$ with $k\in \N$, $k\geq 1$, so that $\mathcal G_{\frac {2\pi}k}$ is a proper subgroup of $SO(2)$. Note that for $k=1$, $\mathcal G_{2\pi}=\{I\}$ is the trivial subgroup. \\
We say that 
a function $u$ defined in $\Omega$ is $k$-invariant if it satisfies
\begin{equation}\tag{\ref{k-inv}}
v(x)=v\left(g(x)\right) \ \ \text{ for all }x\in \Omega, \ \ \text{ for every } g\in \mathcal G_{\frac {2\pi}k}
\end{equation}
%and, when $\Omega$ is bounded we let
%\begin{equation}\label{H-k}
%H^1_{0,k}:=\{v\in H^1_0(\Omega) :  v(x)=v(g(x)) \hbox{ for any }x\in \Omega,  \hbox{ for every } g\in \mathcal G_{\frac {2\pi}k}\}\end{equation}
%while when $\Omega$ is unbounded we consider functions that satisfy \eqref{k-inv} and
%such that $|\nabla u|\in L^2(\Omega)$.
%Observe that solutions to \eqref{P} satisfy the boundary conditions \eqref{eq:bc}. 
%The resort to the Sobolev spaces $H^1_{0,k}$ will be more clear in the applications.\\
Since we will consider functions in that satisfy \eqref{k-inv} it is enough to know them in any sector of $\Omega$ of angle $\frac{2\pi}k$. Then, for any direction $e\in\mathcal S$ we denote by $S_{2k,e}$ the circular sector of $\Omega$ of angle $\frac {2\pi}k$, centered in the origin which has $r_e$ has symmetry line and lies in the halfplane $x\cdot e>0$, see Fig. \ref{fig1} in the Introduction.\\

In order to understand the behavior, and the symmetries of the solutions we need to work also in sectors of amplitude $\frac{\pi}k$ and so, for any $e\in\mathcal S$, such that $e=(\cos\psi,\sin\psi)$ we let
\[S_{k,e}^+:=
\{(r,\theta)\in S_{2k,e} : \psi<\theta<\psi+\frac \pi k\}\]
and by
\[S_{k,e}^-:
\{(r,\theta)\in S_{2k,e} : \psi-\frac \pi k<\theta<\psi\}\]
see Fig. \ref{fig2} in the Introduction.
Observe that $\partial S_{k,e}^+=\Gamma_{1,e}\cup \Gamma_{2,e}\cup\Gamma_{3,e}$ where $\Gamma_{1,e}$ is contained in $\partial \Omega$, $\Gamma_{2,e}$ is contained in $r_e$ while $\Gamma_{3,e}$ is contained in the line $r_{\psi+\frac \pi k}$ if $e=(\cos\psi, \sin\psi)$, 
see Fig. \ref{fig3}.

\begin{figure}[t!]
\includegraphics[scale=2]{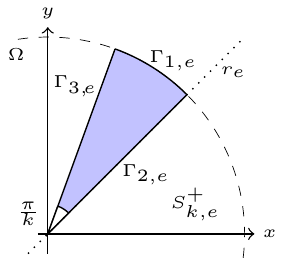}
\quad \quad \quad \quad \quad
\includegraphics[scale=2]{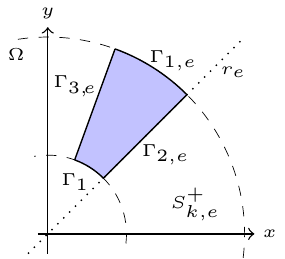}
\caption{The sector $S_{k,e}^+$ and its boundary when $\Omega$ is a ball or when $\Omega$ is an annulus}
\label{fig3}
\end{figure}

%Observe that $\partial S_{k,e}^+=\Gamma_{1,e}\cup \Gamma_{2,e}\cup\Gamma_{3,e}$ where $\Gamma_{1,e}$ is contained in $\partial \Omega$, $\Gamma_{2,e}$ is contained in $r_e$ while $\Gamma_{3,e}$ is contained in the line $r_{\psi+\frac \pi k}$ if $e=(\cos\psi, \sin\psi)$, see figure .......\\
When $e=e_\psi=(\cos\psi,\sin\psi)$ we will use also the notation $S_{k,\psi}^+$, $S_{k,\psi}^-$, $S_{2k,\psi}$ and $w_\psi$ to denote respectively $S_{k,e_\psi}^+$, $S_{k,e_\psi}^-$, $S_{2k,e_\psi}$ and $w_{e_\psi}$.\\

%If $u\in H^1_0(\Omega)$ is a weak solution to \eqref{P} then, 
By elliptic regularity theory $u\in C^{3,\beta}(\bar \Omega)$ for some $\beta>0$. In particular $f'(|x|,u)=\frac{\partial f}{\partial u}(|x|,u)\in C^{0,\alpha}(\bar \Omega)$ for some $\alpha>0$.
% is continuous in $\bar \Omega$.
We can then define the linearized operator at a solution $u$
\[L_u:=-\Delta-f'(|x|,u)I\]
and, for any $D\subseteq \Omega$ the quadratic form associated with $L_u$ in $D$, namely
\[Q_{u,D}(v,w):=\int_D \nabla v\nabla w-f'(|x|,u)vw\]
which is defined for $v,w\in H^1_0(D)$. Next, for any direction $e\in \mathcal S$ and for any $x\in \Omega$  we denote by
\[V_e(x):=\int_0^1 f' ( |x|, tu(\sigma_e(x))+(1-t)u(x)) \ dt,\]
\[V_{es}(x)=\frac 12 \left(f'(|x|,u(x))+f'(|x|,u(\sigma_e(x)))\right)\]
and by
\[L_e:=-\Delta -V_e(x)I \ \ \ \ \ \  \ \  \ L_{es}:=-\Delta -V_{es}(x)I\]
the corresponding linear operators, which are associated with the quadratic forms
\[Q_{e,D}(v,w):=\int_D \nabla v\nabla w -V_e(x)vw \ \ \ \ \ \  \ Q_{es,D}(v,w):=\int_D \nabla v\nabla w  -V_{es}(x)vw
.\]
For any $e\in \mathcal S$, the function $w_e$ satisfies 
\[L_e w_e=0.\] 
Moreover $w_e=0$ on $\partial \Omega$ and on $r_e$ by construction.
Further, if $u$ satisfies \eqref{k-inv} then $w_e=0$ also on $r_{\psi+\frac \pi k}$ by the rotation invariance of $u$, showing that for any $e\in \mathcal S$, $w_e=0$ on $\partial S_{k,e}^+$ and on $\partial S_{k,e}^-$.\\
Observe also that, if $w_e\equiv 0$ in $D$ then $L_e=L_{es}=L_u$ in $D$.\\
Next, for any $D\subseteq \Omega$ and for any linear operator $L$ we denote by $\l_1(L,D)$ the first eigenvalue of the linear operator $L$ in $D$ with Dirichlet boundary conditions.

\

First we can prove the following
\begin{proposition}\label{prop-2.1}
Assume $\Omega$ is bounded.
Let $u$ be a solution to \eqref{P} and \eqref{eq:bc} that satisfies \eqref{k-inv}.
Suppose further that 
there exists a direction $e\in \mathcal S$ such that
\begin{equation}\label{eq:w-e-segno-costante}
w_e>0 \ \ \hbox{ in }S_{k,e}^+ \ \ \hbox{ or }\ w_e<0 \ \ \hbox{ in }S_{k,e}^+.
\end{equation}
Then there exists another direction $e'\in \mathcal S$ such that $w_{e'}\equiv 0$ in 
$S_{k,e'}^+$
(i.e. $u(\sigma_{e'}(x))=u(x)$ in $S_{k,e'}^+$) and
\begin{equation}\label{eq:primo-autov-nonnegativo}
\l_1(L_u, S_{k,e'}^+)\geq 0.
\end{equation}
\end{proposition}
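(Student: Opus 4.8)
The plan is to use a continuity/connectedness argument in the angular variable $\psi$, combined with the basic structural properties of the functions $w_e$ established just above the statement. The key object will be the map $\psi \mapsto w_\psi$ and the sign of $w_\psi$ on the semisector $S_{k,\psi}^+$. Recall that by construction $w_e = 0$ on $\partial S_{k,e}^+$ (on $\partial\Omega$, on $r_e$, and — using $k$-invariance — on $r_{\psi+\frac\pi k}$), and that $w_e$ solves $L_e w_e = 0$. A crucial symmetry to exploit is the relation between $w_\psi$ and $w_{\psi+\frac\pi k}$: reflecting across $r_\psi$ versus across $r_{\psi+\frac\pi k}$ interchanges, up to sign, the behavior on the two halves of the sector. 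In particular, I expect that on $S_{k,\psi}^+$ one has $w_{\psi+\frac\pi k} = -w_\psi \circ \sigma$ for an appropriate isometry $\sigma$ carrying $S_{k,\psi}^+$ to $S_{k,\psi+\frac\pi k}^-$, so that \emph{if $w_\psi > 0$ on $S_{k,\psi}^+$ then $w_{\psi+\frac\pi k} < 0$ on $S_{k,\psi+\frac\pi k}^+$} (or the analogous statement on the minus-semisector). This sign-reversal under the half-sector rotation is what will drive the argument.

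\emph{First} I would set up the continuous one-parameter family: for each $\psi$, consider the sign of $w_\psi$ on $S_{k,\psi}^+$. By hypothesis \eqref{eq:w-e-segno-costante} there is a starting direction $e=e_{\psi_0}$ where $w_{\psi_0}$ has a strict sign, say $w_{\psi_0}>0$, on $S_{k,\psi_0}^+$. \emph{Next}, using the sign-reversal just described, rotating $e$ by the angle $\frac\pi k$ produces a direction $e_{\psi_0+\frac\pi k}$ on whose semisector the corresponding difference has the \emph{opposite} strict sign. Since the quantity $\int_{S_{k,\psi}^+} w_\psi^{+}$ (or a suitable continuous functional detecting the sign of $w_\psi$) depends continuously on $\psi$ — continuity following from elliptic regularity, $u\in C^{3,\beta}(\bar\Omega)$, and continuous dependence of $\sigma_e$ on $e$ — an intermediate-value argument forces the existence of a direction $e'=e_{\psi'}$ between $\psi_0$ and $\psi_0+\frac\pi k$ at which $w_{\psi'}$ changes sign, i.e.\ $w_{e'}$ is neither strictly positive nor strictly negative on $S_{k,e'}^+$.

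\emph{Then} I would upgrade ``$w_{e'}$ has no strict sign'' to ``$w_{e'}\equiv 0$ on $S_{k,e'}^+$.'' This is the step where the maximum principle enters. The function $w_{e'}$ solves the linear equation $L_{e'} w_{e'} = 0$ in $S_{k,e'}^+$ with zero Dirichlet data on the whole boundary $\partial S_{k,e'}^+$. If $w_{e'}$ were somewhere positive and somewhere negative in $S_{k,e'}^+$, I would obtain a contradiction with the sign-definiteness structure: on the nodal domains of $w_{e'}$ the first Dirichlet eigenvalue $\l_1(L_{e'},\cdot)$ of the subdomain would be forced to vanish, and comparing eigenvalues across the one-parameter family (using monotonicity of $\l_1$ under domain inclusion and the fact that at the sign-changing direction the positive and negative parts occupy strictly smaller subdomains) should preclude a nontrivial sign-changing solution. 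The cleanest route is: if $w_{e'}\not\equiv 0$ and has no strict sign, it has a nonempty nodal set splitting $S_{k,e'}^+$ into at least two subdomains, on each of which $w_{e'}$ is a first eigenfunction with eigenvalue $0$; by strict domain monotonicity of $\l_1(L_{e'},\cdot)$ this is incompatible with the limiting sign-definite configurations on the neighboring directions, yielding $w_{e'}\equiv 0$. Once $w_{e'}\equiv 0$ in $S_{k,e'}^+$, the relation noted above gives $L_{e'}=L_{es}=L_u$ there, and $w_{e'}$ being identically zero means it is a first eigenfunction of $L_u$ on $S_{k,e'}^+$ associated with eigenvalue $\l_1(L_u,S_{k,e'}^+)$; the absence of a strictly signed solution nearby then forces $\l_1(L_u,S_{k,e'}^+)\geq 0$, which is \eqref{eq:primo-autov-nonnegativo}.

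\emph{The main obstacle} I anticipate is making the sign-change/continuity step fully rigorous: one must choose a functional of $\psi$ that genuinely detects when the strict sign of $w_\psi$ is lost, and rule out the degenerate possibility that $w_\psi$ passes through the zero function at intermediate angles for a whole interval rather than at an isolated direction. Equally delicate is converting ``$w_{e'}$ has no strict sign'' into the eigenvalue inequality without circularity — the natural tool is the variational characterization $\l_1(L_u,S_{k,e'}^+)=\inf_{v}Q_{u,S_{k,e'}^+}(v,v)/\|v\|_2^2$ together with the fact that $w_{e'}$ (or the limit of nearby strictly signed $w_\psi$) is an admissible nonnegative test function that is annihilated by the operator, pinning the infimum at a nonnegative value. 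Handling the borderline behavior on $r_e$ and on $r_{\psi+\frac\pi k}$, where Hopf's lemma must be invoked to exclude spurious sign loss coming only from the boundary, is the technical crux.
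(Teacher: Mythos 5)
Your global skeleton does match the paper's: start from a direction where $w_{\psi_0}>0$, use $k$-invariance to see the sign is reversed at $\psi_0+\pi/k$, locate a critical intermediate angle, show $w\equiv 0$ there, and obtain \eqref{eq:primo-autov-nonnegativo} from the fact that $\lambda_1(L_{e_\psi},S_{k,\psi}^+)=0$ whenever $w_\psi>0$, combined with $L_{e'}=L_u$ once $w_{e'}\equiv 0$. The gap is in the step you call the ``upgrade''. If the critical angle is defined as the paper does, $\tilde\psi=\sup\{\psi:\ w_{\psi'}\geq 0 \text{ in } S_{k,\psi'}^+ \text{ for all } \psi'<\psi\}$, then by continuity $w_{\tilde\psi}\geq 0$, so the genuinely sign-changing scenario your nodal-domain argument is designed to exclude cannot occur at $\tilde\psi$ at all; the case that actually must be excluded is $w_{\tilde\psi}>0$ strictly, with $w_{\tilde\psi+\varepsilon}$ dipping negative somewhere near $\partial S_{k,\tilde\psi+\varepsilon}^+$ for a sequence $\varepsilon\to 0$. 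Your proposed tool --- strict monotonicity of $\lambda_1$ under domain inclusion, compared against ``limiting sign-definite configurations at neighboring directions'' --- cannot handle this: for angles $\psi_n\downarrow\tilde\psi$ at which positivity fails, a negative nodal domain $D_n$ gives $\lambda_1(L_{e_{\psi_n}},D_n)=0$, hence $\lambda_1(L_{e_{\psi_n}},S_{k,\psi_n}^+)<0$, but these strictly negative values may perfectly well tend to $0=\lambda_1(L_{e_{\tilde\psi}},S_{k,\tilde\psi}^+)$ because $D_n$ shrinks toward the boundary; continuity of eigenvalues produces no contradiction between strict inequalities along a sequence and equality in the limit. (Conversely, if you locate $e'$ by an intermediate-value argument on $\int_{S_{k,\psi}^+} w_\psi$, then nearby directions have no known sign-definiteness, so there is nothing to compare with.)

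What is missing is precisely the quantitative ingredient of the paper's proof: the weak maximum principle in domains of small measure. One fixes a compact $K\subset S_{k,\tilde\psi}^+$ with $|S_{k,\tilde\psi}^+\setminus K|$ so small that the weak maximum principle holds for $L_{e_{\tilde\psi+\varepsilon}}$ in $S_{k,\tilde\psi+\varepsilon}^+\setminus K$ for small $\varepsilon$; since $w_{\tilde\psi}>\eta>0$ on $K$, continuity gives $w_{\tilde\psi+\varepsilon}>\eta/2$ on $K$, and the weak plus strong maximum principles in the thin region (on whose boundary $w_{\tilde\psi+\varepsilon}\geq 0$) then force $w_{\tilde\psi+\varepsilon}>0$ in all of $S_{k,\tilde\psi+\varepsilon}^+$, contradicting the maximality of $\tilde\psi$. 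Smallness of measure (equivalently a Faber--Krahn bound, which makes $\lambda_1$ of any nodal domain of tiny measure strictly positive), not domain inclusion, is the operative mechanism, and it is absent from your sketch; the Hopf-lemma route you allude to is a viable alternative but you do not carry it out (the paper only needs it in the unbounded analogue, Proposition \ref{prop-5.2}). Two further slips: ``$w_{e'}$ being identically zero means it is a first eigenfunction of $L_u$'' is meaningless, since the zero function is not an eigenfunction --- the inequality \eqref{eq:primo-autov-nonnegativo} must instead come from continuity of $\psi\mapsto\lambda_1(L_{e_\psi},S_{k,\psi}^+)$ along the angles where $w_\psi>0$; and that continuity argument only works if the critical direction is approached through angles of strict sign, which your intermediate-value formulation does not guarantee.
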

\begin{proof}
We can assume w.l.o.g. that $w_e>0$ in $S_{k,e}^+$ and that, up to a rotation, $e=(1,0)=(\cos \psi_0,\sin \psi_0)$ with $\psi_0=0$. Now we consider other directions $e_\psi:=(\cos \psi,\sin \psi)$ for $\psi>0$. Note that, if $w_\psi>0$ in $S_{k,\psi}^+$ for some $\psi>0$ then 
$\l_1(L_{e_\psi}, S_{k,\psi}^+)=0$, since $w_{\psi}$ solves $L_{e_\psi}(w_{\psi})=0$ in $S_{k,\psi}^+$ and $w_\psi=0$ on $\partial S_{k,\psi}^+$ by previous considerations. 
Next we define 
\begin{equation}\label{tilde-psi}
\tilde \psi=\sup\{\psi\in [0,\frac\pi k): \hbox{ s.t. } w_{\psi'}\geq  0 \ \hbox{ in } S_{k,\psi'}^+ \text{ for all }\psi'\in[0,\psi)\}\end{equation}
By continuity $\l_1(L_{e_{\tilde \psi}}, S_{k,\tilde \psi}^+)=0$
 and $w_{\tilde \psi}\geq 0$ in $S_{k,\tilde \psi}^+$. This implies that $\tilde \psi<\frac \pi k$, because, by the rotation invariance of $u$, we have 
$w_{\frac \pi k}=-w_e<0$ in $S_{k,\frac \pi k}^+$. \\
We would like to prove that $w_{\tilde \psi} \equiv  0$ in $S_{k,\tilde \psi}^+$. \\
Assume, by contradiction, that $w_{\tilde \psi}\neq 0$. Then, necessarely $w_{\tilde \psi}>0$ in $S_{k,\tilde \psi}^+$. Indeed, by definition of $\tilde \psi$ we have $w_{\tilde \psi}\geq 0$, and since $w_{\tilde \psi}$ satisfies $L_{e_\psi} w_\psi=0$ then the strong maximum principle implies the assertion. \\
Now let $K\subset S_{k,\tilde \psi}^+$ be a compact set such that $S_{k,\tilde \psi}^+\setminus K$ has small measure to allow the weak maximum principle hold for the operator $L_{e_{\tilde \psi+\e}}$ in $S_{k,\tilde \psi+\e}^+\setminus K$ for sufficiently small $\e$.\\
Since $w_{\tilde \psi}>\eta>0$ in $K$ for some $\eta>0$, then
\[w_{\tilde \psi+\e}>\frac \eta 2>0  \ \ \hbox{ in }K\subset  S_{k,\tilde \psi+\e}^+\]
for sufficiently small $\e>0$, while
\[w_{\tilde \psi+\e}>0  \ \hbox{ in }S_{k,\tilde \psi+\e}^+\setminus K\]
by the weak and the strong maximum principle in $S_{k,\tilde \psi+\e}^+\setminus K$. \\
Therefore $w_{\tilde \psi+\e}>0$ in $S_{k,\tilde \psi+\e}^+$ contradicting the definition of $\tilde \psi$. Hence $w_{\tilde \psi}$ must be identically zero in $S_{k,\tilde \psi}^+$. This implies that $0=\l_1(L_{e_{\tilde \psi}}, S_{k,\tilde\psi}^+)=\l_1(L_u, S_{k,\tilde\psi}^+)$ concluding the proof.
\end{proof}

\

Next we have 
\begin{proposition}\label{prop-2.2}
Assume $\Omega$ is bounded.
Let $u$ be a solution to \eqref{P} and \eqref{eq:bc} that satisfies \eqref{k-inv}.
Suppose further that there exists a direction $e\in \mathcal S$ such that $w_e\equiv 0$ in $S_{k,e}^+$ and 
\begin{equation}\label{eq:primo-autov-pos}
\l_1(L_u, S_{k,e}^+)\geq 0.
\end{equation}
Then or $u$ is radial or it is strictly monotone in the angular variable in $S_{k,e}^+$.
\end{proposition}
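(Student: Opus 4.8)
The plan is to analyze the angular derivative $u_\theta:=\frac{\partial u}{\partial\theta}$ of $u$ and to show that, whenever it does not vanish identically, it is (up to sign) the first eigenfunction of $L_u$ in $S_{k,e}^+$. First I would differentiate the equation \eqref{P} with respect to $\theta$. Since $f$ depends on $x$ only through $|x|$ and the Laplacian commutes with rotations, differentiating $-\Delta u=f(|x|,u)$ in $\theta$ gives $-\Delta u_\theta=f'(|x|,u)\,u_\theta$, that is $L_u(u_\theta)=0$ in $S_{k,e}^+$. Thus $u_\theta$ solves the linearized equation, and the whole proof reduces to locating it in the spectral picture of $L_u$.

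The crucial point is to verify that $u_\theta\in H^1_0(S_{k,e}^+)$, i.e. that it vanishes on each of the three pieces of $\partial S_{k,e}^+$. On $\Gamma_{1,e}\subset\partial\Omega$ this is immediate: $u\equiv 0$ on the circle(s) making up $\partial\Omega$, so its tangential, hence angular, derivative vanishes there. On $\Gamma_{2,e}\subset r_e$ the hypothesis $w_e\equiv 0$ in $S_{k,e}^+$ says exactly that $u\circ\sigma_e=u$, i.e. $u(r,\psi+s)=u(r,\psi-s)$; differentiating at $s=0$ yields $u_\theta=0$ on $r_e$. For the remaining piece $\Gamma_{3,e}\subset r_{\psi+\frac\pi k}$ I would combine this symmetry with the $k$-invariance: from the identity $\sigma_{\psi+\frac\pi k}=R_{\frac{2\pi}k}\circ\sigma_\psi$ together with \eqref{k-inv} (so $u\circ R_{\frac{2\pi}k}=u$) and $u\circ\sigma_e=u$, one gets $u\circ\sigma_{\psi+\frac\pi k}=u$, namely $u$ is symmetric also with respect to $r_{\psi+\frac\pi k}$. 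Differentiating at the axis then gives $u_\theta=0$ on $\Gamma_{3,e}$ as well, so $u_\theta\in H^1_0(S_{k,e}^+)$.

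Once $u_\theta\in H^1_0(S_{k,e}^+)$ solves $L_u(u_\theta)=0$, the statement follows from a dichotomy. If $u_\theta\equiv 0$ in $S_{k,e}^+$, then $u$ depends only on $r$ there; by the symmetry across $r_e$ it is radial on the whole sector $S_{2k,e}$, and then by \eqref{k-inv} it is radial on all of $\Omega$. If instead $u_\theta\not\equiv 0$, then $0$ is an eigenvalue of $L_u$ in $S_{k,e}^+$ with eigenfunction $u_\theta$; since by assumption $\l_1(L_u,S_{k,e}^+)\geq 0$ is the least eigenvalue, we must have $\l_1(L_u,S_{k,e}^+)=0$ and $u_\theta$ must be a first eigenfunction. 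By the simplicity of the first eigenvalue and the positivity of the first eigenfunction on the connected set $S_{k,e}^+$ (via the strong maximum principle), $u_\theta$ has a strict sign, which is precisely the strict monotonicity of $u$ in the angular variable.

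I expect the main obstacle to be the boundary analysis on $\Gamma_{3,e}$: recognizing that the interplay of $w_e\equiv 0$ with the $k$-invariance forces an extra reflection symmetry across $r_{\psi+\frac\pi k}$ is the one genuinely non-routine step, whereas the differentiation of the equation and the first-eigenvalue characterization are standard. A minor technical point to keep clean is the regularity needed to treat $u_\theta$ as an admissible function up to the corners of the sector and to apply the strong maximum principle; this is guaranteed by the regularity $u\in C^{3,\beta}(\bar\Omega)$ already recorded in the excerpt.
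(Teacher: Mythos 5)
Your proposal is correct and follows essentially the same route as the paper's proof: differentiate the equation in $\theta$ to get $L_u(u_\theta)=0$, show $u_\theta=0$ on all three pieces of $\partial S_{k,e}^+$ (using the Dirichlet condition, the symmetry $w_e\equiv 0$, and the combination of that symmetry with the $k$-invariance to obtain the extra reflection symmetry across $r_{\psi+\frac{\pi}{k}}$), and then conclude via the dichotomy that either $u_\theta\equiv 0$ (so $u$ is radial) or $u_\theta$ is a first eigenfunction of $L_u$ in $S_{k,e}^+$ and hence has one sign. Your explicit identity $\sigma_{\psi+\frac{\pi}{k}}=R_{\frac{2\pi}{k}}\circ\sigma_\psi$ just makes precise the symmetry argument the paper states in words.
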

\begin{proof}
We can assume w.l.o.g. that $e=(1,0)$ and we use the polar coordinates $(x_1,x_2)=(r\cos \theta, r\sin \theta)$. As said before, if $u$ solves \eqref{P} then $u\in C^{3,\beta}(\bar\Omega)$ and the derivative of $u$ with respect to the angular variable $\theta$, that we denote $u_{\theta}$, satisfies
\[
-\Delta u_\theta=f'(|x|,u)u_\theta\ \  \hbox{ in }S_{k,e}^+\]
Moreover $u_\theta=0$ on $\partial \Omega$, since $u$ satisfies zero Dirichlet boundary conditions on $\partial \Omega$,
and $u_\theta=0$ on the $\Gamma_{2,e}$ by the symmetry of $u$ with respect to $r_e$. 
Further, the symmetry of $u$ with respect to $r_e$ together with the invariance by rotations of angle $\frac{2\pi}k$ implies also that $u$ is symmetric with respect to the line $\theta={\frac \pi k}$ which is part of the boundary of $S_{k,e}^+$. Hence $u_\theta=0$ also on $\theta={\frac \pi k}$ meaning that $u_\theta=0$ on $\partial S_{k,e}^+$. \\
Now, since $\l_1(L_u,S_{k,e}^+)\geq 0$ we have or that $u_\theta=0$ in $S_{k,e}^+$, meaning that $u$ is radial, or, else that $u_\theta$, which is nonzero, is a first eigenfunction of $L_u$ in $S_{k,e}^+$ and so it has one sign in $S_{k,e}^+$, namely it is positive or negative in $S_{k,e}^+$. 
This shows that if $u$ is not radial it is strictly monotone in the polar angle $\theta$ and concludes the proof.
\end{proof}

By the symmetry assumption on $u$, whenever $u$ is nonradial then it is strictly monotone in the polar angle $\theta$ also in the sector $S_{k,e}^-$.

\

Next, for every real valued function $g$, we let $g^+(x)=\max\{g(x), 0\}$ and $g^-(x)=\min\{g(x),0\}$ denote the 
positive and negative part of $g$, respectively. For every $D\subseteq \Omega$,  $\chi_D$ stands for the characteristic function of $D$. If $D$ is a domain, we let $C^1_0(D)$ denote the space of all $C^1$-functions on $D$ with compact support strictly contained in $D$.\\ 
Finally, as explained in the Introduction, we denote by
$m_k(u)$ the $k$-Morse index of $u$, namely the 
maximal dimension of a subspace of $C^1_{0,k}$
in which the quadratic form $Q_{u,\Omega}$ is negative defined. Equivalently, when $\Omega$ is bounded, it is the number, counted with multiplicity, of the negative eigenvalues of $L_u$ with corresponding eigenfunction $k$-invariant.

\section{The case of $f$ convex}
In this section we prove the monotonicity and symmetry results in the case when $\Omega$ is bounded and $f$ is convex with respect to the second variable. Under this assumption
\[f(|x|,u(\sigma_e(x)))-f(|x|,u(x))\geq f'(|x|,u(x))w_e(x)\]
and, 
for any direction $e\in \mathcal S$, the function $w_e$ satisfies the inequality
\begin{equation}\label{eq:ineq-we}
-\Delta w_e-f'(|x|,u)w_e\geq 0 \ \ \hbox{ in }S_{k,e}^+\end{equation}
and also the boundary conditions
$w_e=0$ on $\partial S_{k,e}^+$.
First we prove the following
\begin{lemma}\label{lemma-fine-dim}
Assume $\Omega$ is bounded. 
Assume further that one among $\l_1(L_u,S_{k,e}^+)$ and $\l_1(L_u,S_{k,e}^-)$ is nonnegative. Then or $w_e\equiv 0$ in $S_{k,e}^+$ and $\l_1(L_u,S_{k,e}^+)\geq 0$
or $w_e$ has a sign in $S_{k,e}^+$.
\end{lemma}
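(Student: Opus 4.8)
The plan is to exploit that, by convexity, $w_e$ is a supersolution of $L_u$ on each of the two semisectors, to run a maximum-principle dichotomy on whichever semisector carries a nonnegative first eigenvalue, and finally to transport the conclusion to $S_{k,e}^+$ using the reflection antisymmetry of $w_e$. The borderline case of a vanishing first eigenvalue is the delicate point and will be handled separately.

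First I would record the two structural facts the argument rests on. The inequality \eqref{eq:ineq-we}, coming from convexity, reads $L_u w_e\ge 0$ in $S_{k,e}^+$; the identical computation (using that $-\Delta w_e=f(|x|,u(\sigma_e(x)))-f(|x|,u(x))$ holds throughout $\Om$ since $|\sigma_e(x)|=|x|$) gives $L_u w_e\ge 0$ in $S_{k,e}^-$ as well, while $w_e=0$ on $\partial S_{k,e}^\pm$ as noted in Section~\ref{se:2}. Moreover, applying $\sigma_e$ to \eqref{eq:difference} yields the antisymmetry $w_e(\sigma_e(x))=-w_e(x)$; since $\sigma_e$ is an isometry interchanging $S_{k,e}^+$ and $S_{k,e}^-$, the vanishing of $w_e$ (respectively, its having a fixed sign) on one semisector is equivalent to the same on the other (with reversed sign).

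Let $S$ be a semisector among $S_{k,e}^+,S_{k,e}^-$ with $\l_1(L_u,S)\ge 0$, which exists by hypothesis. On $S$ I would prove the dichotomy ``$w_e\equiv 0$ or $w_e$ has a sign'' by splitting on the value of $\l_1(L_u,S)$. If $\l_1(L_u,S)>0$, the maximum principle valid in domains with positive first eigenvalue applies to the supersolution $w_e$ (vanishing on $\partial S$) and gives $w_e\ge 0$ in $S$; the strong maximum principle, after shifting the zeroth order coefficient by a large constant so that it becomes nonnegative, then forces either $w_e\equiv 0$ or $w_e>0$ in $S$. If instead $\l_1(L_u,S)=0$, let $\varphi_1>0$ be the first eigenfunction; testing $L_u w_e\ge 0$ against $\varphi_1$ and using self-adjointness of $L_u$ together with the fact that both $w_e$ and $\varphi_1$ vanish on $\partial S$ gives $\int_S (L_u w_e)\varphi_1=\int_S w_e\,L_u\varphi_1=0$, whence $L_u w_e\equiv 0$ because the integrand is nonnegative and $\varphi_1>0$. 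Thus $w_e$ lies in the Dirichlet kernel of $L_u$ on $S$, which by simplicity of the first eigenvalue is spanned by $\varphi_1$, so $w_e$ is a multiple of $\varphi_1$ and again either vanishes identically or has a strict sign in $S$.

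It remains to transfer this to $S_{k,e}^+$ and to recover the eigenvalue statement in the degenerate alternative. If $w_e$ has a sign on $S$, the antisymmetry immediately gives that $w_e$ has a sign on $S_{k,e}^+$, which is the second alternative. If instead $w_e\equiv 0$ on $S$, then by antisymmetry $w_e\equiv 0$ on the whole of $S_{2k,e}$, so $u$ is symmetric with respect to $r_e$ there; consequently $f'(|x|,u)$ is $\sigma_e$-invariant on $S_{2k,e}$ and, $\sigma_e$ being an isometry mapping $S_{k,e}^+$ onto $S_{k,e}^-$, one gets $\l_1(L_u,S_{k,e}^+)=\l_1(L_u,S_{k,e}^-)=\l_1(L_u,S)\ge 0$, which is exactly the first alternative. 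I expect the only genuinely delicate point to be the case $\l_1(L_u,S)=0$, where the plain maximum principle is unavailable and one must instead use the self-adjoint testing against $\varphi_1$ to upgrade the supersolution $w_e$ into an exact solution, hence into a signed first eigenfunction.
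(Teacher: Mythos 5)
Your proof is correct, and it reaches the dichotomy by a route that genuinely differs from the paper's. The paper treats both possible values of $\lambda_1$ uniformly with a single test-function computation: multiplying \eqref{eq:ineq-we} by $w_e^-$ and integrating gives $Q_{u,S_{k,e}^+}(w_e^-,w_e^-)\le 0$; since $\lambda_1(L_u,S_{k,e}^+)\ge 0$ makes the quadratic form nonnegative on $H^1_0(S_{k,e}^+)$, equality must hold, so $w_e^-$ is a null minimizer of the form, hence solves $L_u w_e^-=0$, and the strong maximum principle applied to $w_e^-$ concludes. You instead split on $\lambda_1>0$ versus $\lambda_1=0$: for $\lambda_1>0$ you invoke the equivalence of a positive principal eigenvalue with the maximum principle (whose standard proof is exactly the paper's $w_e^-$ computation, so there you differ only in packaging), while for $\lambda_1=0$ you test against the positive first eigenfunction $\varphi_1$, use self-adjointness to get $\int_S (L_u w_e)\varphi_1=0$, deduce $L_u w_e\equiv 0$, and conclude from simplicity of $\lambda_1$ that $w_e$ is a multiple of $\varphi_1$. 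That degenerate-case argument has no counterpart in the paper and yields slightly more information ($w_e$ is an exact first eigenfunction when $\lambda_1=0$). You also make explicit what the paper compresses into ``w.l.o.g.'': since the conclusion of the lemma is asymmetric in $S_{k,e}^+$, when only $\lambda_1(L_u,S_{k,e}^-)\ge 0$ is assumed one must transfer the alternatives via the antisymmetry $w_e(\sigma_e(x))=-w_e(x)$ and, in the vanishing case, via the resulting $\sigma_e$-invariance of the potential $f'(|x|,u)$, which identifies $\lambda_1(L_u,S_{k,e}^+)$ with $\lambda_1(L_u,S_{k,e}^-)$; this correctly uses your observation that the supersolution inequality holds in $S_{k,e}^-$ as well, the convexity inequality being pointwise in all of $\Omega$. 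The only mild costs of your route are that it leans on two standard external facts (the eigenvalue characterization of the maximum principle and the simplicity of the principal eigenvalue on the sector), and that the Green/self-adjointness identity on the cornered sector should strictly be justified through the weak formulation (test the eigenvalue equation with $w_e\in H^1_0(S)$ and the pointwise inequality with $\varphi_1$ by density), at the same level of implicit rigor the paper uses when it passes from the null minimizer to the equation $L_u w_e^-=0$.
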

\begin{proof}
We can assume w.l.o.g. that $\l_1(L_u, S_{k,e}^+)\geq 0$. \\
If $w_e\equiv 0 $ in $S_{k,e}^+$
then we are done. Assume else that $w_e\neq 0 $ in $S_{k,e}^+$. We want to prove that $w_e$ has a sign in $S_{k,e}^+$. Assume not, then $w_e^-\neq 0$, and $w_e^-=0$ on $\partial S_{k,e}^+$, since $w_e=0$ on $\partial S_{k,e}^+$ by previous considerations.
Multiplying \eqref{eq:ineq-we} by $w_e^-$ and integrating over $S_{k,e}^+$ we have
\[Q_{u, S_{k,e}^+}(w_e^-,w_e^- )\leq 0\]
which, together with the relation 
$\l_1(L_u, S_{k,e}^+)\geq 0$ implies
\[Q_{u,  S_{k,e}^+ }(w_e^-,w_e^-)=0\]
meaning that $w_e^-$ reaches the infimum of $Q_{u,  S_{k,e}^+ }(v,v)$ in $H^1_0(S_{k,e}^+)$ and hence it solves 
\[L_u(w_e^-)=0 \ \ \hbox{ in }S_{k,e}^+.\]
Now, since $w_e^-\leq 0$ the strong maximum principle implies either that $w_e^-\equiv 0$ in $S_{k,e}^+$, which is not possible by assumption, or that $w_e^-<0$ in $S_{k,e}^+$ which implies $w_e<0$. This concludes the proof.
\end{proof}

\

We are now in position to prove the main point:

\begin{proposition}\label{prop-3.2}
Assume $\Omega$ is bounded. Let $u$ be a solution to \eqref{P} and \eqref{eq:bc} that satisfies \eqref{k-inv}
and such that 
$m_k(u)\leq 2$. Then there exists a direction $e\in \mathcal S$ such that, or $w_e\equiv 0$ in $S_{k,e}^+$ and $\l_1(L_u,S_{k,e}^+)\geq 0$ or $w_e$ has a sign in $S_{k,e}^+$.
\end{proposition}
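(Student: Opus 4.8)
**The plan is to use the Morse index constraint to produce a direction $e$ along which one of the two semisectors $S_{k,e}^\pm$ carries a nonnegative first eigenvalue, and then invoke Lemma \ref{lemma-fine-dim}.** The key observation is that $m_k(u)\leq 2$ bounds the dimension of the space on which $Q_{u,\Omega}$ is negative definite among $k$-invariant functions. The strategy is to build, for each direction $e$ for which $w_e$ changes sign in $S_{k,e}^+$, explicit test functions out of the positive and negative parts $w_e^+$ and $w_e^-$, suitably extended by $k$-invariance and by odd reflection across $r_e$, and to show that if no good direction existed we could assemble too many linearly independent directions of negativity for $Q_{u,\Omega}$, exceeding the bound $2$.

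First I would argue by contradiction: suppose that for every direction $e\in\mathcal S$ it is \emph{not} the case that $w_e\equiv 0$ in $S_{k,e}^+$ with $\l_1(L_u,S_{k,e}^+)\geq 0$, and also that $w_e$ does not have a sign in $S_{k,e}^+$. By the contrapositive of Lemma \ref{lemma-fine-dim}, the failure of both alternatives forces \emph{both} $\l_1(L_u,S_{k,e}^+)<0$ and $\l_1(L_u,S_{k,e}^-)<0$ for every $e$; indeed if either eigenvalue were nonnegative the Lemma would yield one of the two desired conclusions. Thus the contradiction hypothesis gives a direction-independent strict negativity of the first eigenvalue on \emph{each} semisector. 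Next I would fix a convenient direction, say $e=(1,0)$, and from $\l_1(L_u,S_{k,e}^+)<0$ extract a first eigenfunction $\varphi^+>0$ supported in $S_{k,e}^+$ on which $Q_{u,S_{k,e}^+}(\varphi^+,\varphi^+)<0$; similarly $\varphi^-$ on $S_{k,e}^-$. Extending each by zero to $\Omega$ and then by $k$-invariance (averaging over the group $\mathcal G_{\frac{2\pi}k}$, or replicating around the $k$ sectors), I obtain $k$-invariant test functions in $C^1_{0,k}$ with disjoint supports, hence $Q$-orthogonal, producing two negative directions already from a single $e$.

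**The main obstacle, and where I would spend the most care, is manufacturing a \emph{third} independent negative direction to violate $m_k(u)\leq 2$.** The idea is to rotate: choosing a second direction $e'$ whose semisector overlaps the first only partially, and using that $\l_1(L_u,S_{k,e'}^\pm)<0$ as well, one tries to produce an eigenfunction whose support is not contained in the span of the first two. The delicate point is the overlap of supports under rotation, since the sectors $S_{k,e}^\pm$ for nearby $e$ intersect, so the test functions are no longer automatically $Q$-orthogonal and linear independence must be checked by a support or nodal argument rather than by orthogonality. I would resolve this by exploiting the freedom in choosing $e'$ so that $S_{k,e'}^+$ straddles the common edge $r_e$ of $S_{k,e}^+$ and $S_{k,e}^-$; the corresponding eigenfunction is then supported across that edge where the previous two vanish, forcing genuine independence. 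Assembling three linearly independent functions on which $Q_{u,\Omega}$ is negative definite contradicts $m_k(u)\leq 2$, and this contradiction establishes the existence of the asserted direction $e$. I expect that handling the boundary behaviour of these extended test functions on $r_e$ and $r_{\psi+\frac\pi k}$, and verifying they lie in the symmetric space $C^1_{0,k}$, will be the technically fussy part, though conceptually routine given the setup in Section \ref{se:2}.
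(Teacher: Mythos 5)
Your reduction via the contrapositive of Lemma \ref{lemma-fine-dim} is fine: if the dichotomy fails for every $e$, then indeed $\l_1(L_u,S_{k,e}^+)<0$ and $\l_1(L_u,S_{k,e}^-)<0$ for every direction, and the two first eigenfunctions $\varphi_e^\pm$, extended by zero and by $k$-invariance, span a two-dimensional $k$-invariant subspace on which $Q_{u,\Omega}$ is negative definite (disjointness of supports kills the cross terms). The gap is the third step. The $k$-Morse index bounds the dimension of subspaces on which $Q_{u,\Omega}$ is negative \emph{definite}; it says nothing about how many linearly independent functions $v_i$ with $Q_{u,\Omega}(v_i,v_i)<0$ can exist. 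Your third eigenfunction, supported on a sector straddling $r_e$, overlaps the supports of both previous functions, so the cross terms $Q_{u,\Omega}(\varphi',\tilde\varphi_e^\pm)$ are uncontrolled, and negative definiteness on the three-dimensional span simply does not follow: a symmetric $3\times 3$ matrix with negative diagonal entries need not be negative definite. Checking linear independence ``by a support or nodal argument,'' as you propose, addresses the wrong issue — independence was never the problem, definiteness is. Nor can the disjoint-support trick be pushed further: only two disjoint semisectors of amplitude $\frac\pi k$ fit inside the fundamental sector $S_{2k,e}$ of amplitude $\frac{2\pi}k$, and your hypothesis gives nothing on narrower sectors, where the first eigenvalue only increases. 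So no contradiction with $m_k(u)\leq 2$ is reached along this route.

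The paper avoids manufacturing a third negative direction altogether. For each $e$ it forms the single function $\xi_e=A_e\varphi_e^+-B_e\varphi_e^-$, with coefficients chosen so that its $k$-invariant extension $\tilde\xi_e$ is $L^2$-orthogonal to the first negative eigenfunction $\varphi_1$; it then observes that $h(\psi)=\int_{S_{2k,\psi}}\xi_\psi\varphi_2\,dx$ is continuous and satisfies $h(\frac\pi k)=-h(0)$, because rotating by $\frac\pi k$ swaps the two semisectors up to $k$-invariance and gives $\tilde\xi_{\frac\pi k}=-\tilde\xi_0$. By the intermediate value theorem some $\tilde\xi_{e'}$ is orthogonal to $\varphi_2$ as well, and then the Morse index bound forces $Q_{u,\Omega}(\tilde\xi_{e'},\tilde\xi_{e'})\geq 0$. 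Since the two pieces have disjoint supports, this quantity equals $k\bigl(A_{e'}^2\l_1(L_u,S_{k,e'}^+)+B_{e'}^2\l_1(L_u,S_{k,e'}^-)\bigr)$, so at least one of the two eigenvalues is nonnegative — exactly what your contradiction hypothesis denies — and Lemma \ref{lemma-fine-dim} concludes. If you wish to keep your framework, this orthogonality-plus-intermediate-value construction is the missing ingredient; the rest of your setup is compatible with it.
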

This proposition, together with Propositions \ref{prop-2.1} and \ref{prop-2.2} then implies Theorem \ref{teo:1} and concludes this case.
\begin{proof}
Let us denote by
$\varphi_1$ and  $\varphi_2$ two $k$-invariant eigenfunctions of the linearized operator $L_u$ 
%in $H^1_{0,k}$ 
orthogonal in $L^2(\Omega)$
corresponding to the two negative eigenvalues. \\
For any direction $e\in \mathcal S$ let us denote by $\varphi_e^+$ the unique first positive eigenfunction of $L_u$ in $S_{k,e}^+$ with Dirichlet boundary conditions, which corresponds to $\l_1(L_u, S_{k,e}^+ )$ and by $\varphi_e^-$ the unique first positive eigenfunction of $L_u$ in $S_{k,e}^-$ with Dirichlet boundary conditions, which corresponds to $\l_1(L_u, S_{k,e}^- )$, normalized such that
$\int_{S_{k,e}^+}
\left(\varphi_e^+\right)^2\ dx=\int_{S_{k,e}^-}
\left(\varphi_e^-\right)^2\ dx=1$
and by $\tilde \varphi_e^\pm$ their extension in all of $\Omega$ by the invariance by rotations of angle $\frac{2\pi}k$. 
We want to prove that there exists a direction $e\in \mathcal S$ for which at least one, among $\l_1(L_u, S_{k,e}^+ )$ and $\l_1(L_u, S_{k,e}^- )$ is nonnegative, so that the result follows by Lemma \ref{lemma-fine-dim}. \\
Define the function
 \[\xi_e:=\left(\frac {\int _{  S_{k,e}^-} \varphi_{e}^-\varphi_1  }{\int _{  S_{k,e}^+} \varphi_{e}^+\varphi_1  }\right)^{\frac 12} \varphi_e^+-\left(\frac {\int _{  S_{k,e}^+} \varphi_{e}^+\varphi_1}{\int _{  S_{k,e}^-} \varphi_{e}^-\varphi_1  } \right)^{\frac 12}\varphi_{e}^-=A_e\varphi_e^+-B_e\varphi_{e}^-\]
 which is supported in $S_{2k,e}$, and denote by $\tilde \xi_e$ its extension in all of $\Omega$ by the invariance by rotations of angle $\frac{2\pi}k$.
 Then, by construction
\[\begin{split}
\int_\Omega \tilde\xi_e\varphi_1\ dx&=k\int_{S_{2k,e}}\xi_e\varphi_1 \ dx=k
\left[A_e\int _{S_{k,e}^+}\varphi_e^+\varphi_1 \ dx-B_e\int _{S_{k,e}^-}\varphi_{e}^-\varphi_1 \ dx\right]\\
&=k\left[\left(\int _{  S_{k,e}^-} \varphi_{e}^-\varphi_1\int _{S_{k,e}^+}\varphi_e^+\varphi_1dx\right)^{\frac 12}-\left(\int _{S_{k,e}^+}\varphi_e^+\varphi_1
\int _{  S_{k,e}^-} \varphi_{e}^-\varphi_1dx\right)^{\frac 12}\right]=0
\end{split}
\]
which means that $\tilde \xi_e$ is %$k$-invariant and 
orthogonal in $L^2(\Omega)$ to $\varphi_1$ for every $e\in \mathcal S$. Assume $e=e_\psi=(\cos \psi, \sin \psi)$, with $\psi\in [0,2\pi)$.
Let us consider the map
\begin{equation}%\label{eq:h-psi}
h: [0,2\pi)\to \R \ \ , \ \ h(\psi):=\int _{S_{2k,\psi}}\xi_\psi\varphi_2 \ dx \end{equation}
which is continuous with respect to the angle $\psi$. Here $\xi_\psi$ stands for $\xi_{e_\psi}$. We want to prove that there exists $\psi\in [0,\frac \pi k)$ such that $h(\psi)=0$. 
If $h(0)=0$ then we are done.
If else $h(0)\neq 0$, observe that $S_{k, \frac \pi k}^-=S_{k, 0}^+$ and that $S_{k, \frac \pi k}^+$ can be obtained rotating the sector $S_{k, 0}^-$ by $\frac {2\pi }k$, namely $S_{k, \frac \pi k}^+=R_{\frac{2\pi}k}(S_{k, 0}^-)$. Then $\varphi_{\frac \pi k}^+=\tilde \varphi_{0}^-$ by the uniqueness of the first positive eigenfunction of norm $1$ and $\varphi_{\frac \pi k}^-=\varphi_{0}^+$.\\
Moreover
\[
A_{e_{\frac \pi k}}=\left(\frac {\int _{  S_{k,\frac \pi k }^-} \varphi_{\frac \pi k}^-\varphi_1  }{\int _{  S_{k,\frac \pi k}^+} \varphi_{\frac \pi k}^+\varphi_1  }\right)^{\frac 12} =\left(\frac {\int _{  S_{k,0}^+} \varphi_{0}^+\varphi_1  }{\int _{  S_{k,0}^-} \varphi_{0}^-\varphi_1  }\right)^{\frac 12} =B_{e_{0}}
\]
where we used the fact that $\varphi_1$ is $k$-invariant. In the same manner 
\[B_{e_{\frac \pi k}}=A_{e_{0}}\]
It is then easy to see that $\tilde \xi_{\frac \pi k}=-\tilde\xi_0$, that implies 
$h(\frac \pi k)=-h(0)$
since $\varphi_2$ is $k$-invariant.
Since $h$ is a continuous function then it has one zero between $0$ and $\frac \pi k$. This means that there exists a direction $e'\in \mathcal S$ such that
$\tilde \xi_{e'}$ is orthogonal to $\varphi_2$ in $L^2(\Omega)$, since $\int_\Omega \tilde \xi_{e'}\varphi_2 =k\int_{S_{2k,e'}} \xi_{e'}\varphi_2 =0$. 
Since $\varphi_1, \varphi_2$ and  $\tilde \xi_{e'}$ are $k$-invariant and  $m_k(u)=2$ then 
\[Q_{u,\Omega}(\tilde \xi_{e'},\tilde \xi_{e'})\geq 0.\]
Finally by the definition of $\tilde \xi_{e'}$ we have 
\[\begin{split}
&Q_{u,\Omega}(\tilde \xi_{e'},\tilde \xi_{e'})=\\
&=k\left[A_{e'}^2 \left(\int_{S_{k,e'}^+}|\nabla \varphi_{e'}^+|^2-f'(|x|,u)(\varphi_{e'}^+)^2\right)+B_{e'}^2 \left(\int_{S_{k,e'}^-}|\nabla \varphi_{e'}^-|^2-f'(|x|,u)(\varphi_{e'}^-)^2\right)\right]\\
&=kA_{e'}^2\l_1(L_u,S_{k,e'}^+)+kB_{e'}^2\l_1(L_u,S_{k,e'}^-)
\geq 0
\end{split}
\]
from which it follows that al least one among $\l_1(L_u,S_{k,e'}^+)$ and $\l_1(L_u,S_{k,e'}^-)$ is nonnegative.
\end{proof}

\begin{remark}\label{rem:1}
%Remark that if we can prove that a solution $u\in H^1_{0,k}$ is not radial then  we get that $u$ is symmetric with respect to a direction $e=e_{\psi}\in \mathcal S$ in $S_{2k,e}$ and it is strictly monotone with respect to the angular variable in the sectors $S_{k,e}^{\pm}$. The direction $e$ passes through the maximum or minimum points of $u$.\\ 
%Moreover the symmetry of $u$ with respect to $r_{\psi}$ together with the invariance by rotations of angle $\frac {2\pi}k$ also implies that $u$ is symmetric with respect to the line $\theta=\psi+\frac{\pi}k$ which is part of the boundary of $S_{k,e}^+$ and in consecutive sectors of amplitude $\frac \pi k$ the angular derivative  $u_\theta$ is or first strictly increasing and next strictly decreasing or viceversa. Then, whenever a solution $u\in H^1_{0,k}$ is nonradial it is symmetric with respect to the lines which passes throughout all the critical points of $u$, maxima and minima, which are alternates and located at an angular distance of $\frac \pi k$. These solutions then exhibit the same monotonicity and symmetry properties of the functions $\sin k\theta$, $\cos k\theta$ which are indeed spherical Harmonics in $\R^2$.\\ 
%It lasts to understand whether the maxima and minima of $u$ lie at the same distance from the origin, namely they are located at the vertex of a regular polygon of $2k$ sides or lie at the vertex of two regular concentric polygons of $k$ sides, but this is still an open problem. 
When a $k$-invariant solution $u$ is not radial, then, by Theorem \ref{teo:1} it is symmetric with respect to a direction $e=e_\psi$ in $S_{2k,e}$ and by the invariance \eqref{k-inv} it is symmetric also with respect to the directions $e_{\psi+\frac \pi k}$ and also $e_{\psi+\frac {h\pi}k}$ for $h=1,\dots,2k$. Moreover 
%$(\cos(\psi+\frac {h\pi}k), \sin(\psi+\frac {h\pi}k))$ for $h=1,\dots,2k$ and 
in each sector $S_{k, \psi+\frac {h\pi}k}^\pm$ it is strictly monotone with respect to the angular variable and consecutive sectors see the angular derivative change sign. \\
Then, the maxima and minima of $u$ either belong to the symmetry axes or they are placed in the origin.
Both this configurations can appear, in particular for sign changing solutions. Indeed a nodal $k$-invariant solution can have, as an example, either $k$ maxima and $k$ minima placed alternately along the directions $(\cos(\psi+\frac {h\pi}k), \sin(\psi+\frac {h\pi}k))$ at an angular distance of $\frac \pi k$, or else only $1$ maximum in the origin and $k$ minima placed along the directions $(\cos(\psi+\frac {2h\pi}k), \sin(\psi+\frac {2h\pi}k))$ at an angular distance of $\frac {2\pi} k$.
%The first type of solutions resemble the ones in ......., while the second are of the type of .......
\end{remark}
%\edz{finire con citazioni}

\

\section{the case of $f'$ convex}
In this section we consider the case when $f'(|x|,s)=\frac{\partial f}{\partial s}(|x|,s)$ is convex with respect to the second variable and we prove Theorem \ref{teo:2}. Under this assumption we have for any $x\in \Omega$
\begin{equation}\label{eq:confronto-V}
\begin{split}
V_e(x)&\leq \int_0^1\left[tf'(|x|,u(x))+(1-t)f'(|x|,u(\sigma_e(x)))\right]\ dt\\
& =\frac 12 \left[f'(|x|,u(x))+f'(|x|,u(\sigma_e(x)))\right]=V_{es}(x).
\end{split}\end{equation} 
First, we can prove the following:
\begin{proposition}\label{prop-4.1}
Assume $\Omega$ is bounded. Let $u$ be a solution to \eqref{P} and \eqref{eq:bc} that
satisfies \eqref{k-inv} and such that
%Suppose $u\in H^1_{0,k}$ solves \eqref{P}. Assume further that 
$m_k(u)\leq 2$. Suppose further that 
$f'$ is convex with respect to the second variable. Then there exists a direction $e\in \mathcal S$ such that, or $w_e\equiv 0$ in $S_{k,e}^+$ and $\l_1(L_u,S_{k,e}^+)\geq 0$ or $w_e$ has one sign in $S_{k,e}^+$.
\end{proposition}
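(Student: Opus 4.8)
The plan is to run the argument of Proposition \ref{prop-3.2} with the symmetrized operator $L_{es}$ playing the role of $L_u$, exploiting the two structural facts peculiar to the present case: the pointwise bound $V_e\le V_{es}$ from \eqref{eq:confronto-V}, and the $\sigma_e$-invariance of the symmetric potential, $V_{es}(\sigma_e(x))=V_{es}(x)$, which forces $\l_1(L_{es},S_{k,e}^+)=\l_1(L_{es},S_{k,e}^-)$ with first eigenfunctions exchanged by the reflection $\sigma_e$. First I would record the analogue of Lemma \ref{lemma-fine-dim}: if $\l_1(L_{es},S_{k,e}^+)\ge 0$, then either $w_e\equiv 0$ in $S_{k,e}^+$ — in which case $L_{es}=L_u$ there and hence $\l_1(L_u,S_{k,e}^+)\ge 0$ — or $w_e$ has a sign in $S_{k,e}^+$. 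The proof tests the identity $L_e w_e=0$ against $w_e^-$ to obtain $Q_{e,S_{k,e}^+}(w_e^-,w_e^-)=0$; since $V_e\le V_{es}$ gives $Q_{es,S_{k,e}^+}(v,v)\le Q_{e,S_{k,e}^+}(v,v)$, the sign condition on $\l_1(L_{es},S_{k,e}^+)$ pins $w_e^-$ as a first eigenfunction of $L_{es}$, so $w_e^-<0$ throughout and $w_e$ cannot change sign. This reduces Proposition \ref{prop-4.1} to producing a single direction $e$ with $\l_1(L_{es},S_{k,e}^+)\ge 0$.

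The bridge to the Morse index is the observation that for a function $v$ supported in $S_{2k,e}$ and symmetric with respect to $\sigma_e$ one has $Q_{u,S_{2k,e}}(v,v)=Q_{es,S_{2k,e}}(v,v)$, because the change of variables $x\mapsto\sigma_e(x)$ turns $\int f'(|x|,u)\,v^2$ into $\int f'(|x|,u(\sigma_e(x)))\,v^2$ and averages the two potentials to $V_{es}$. I would therefore take the first eigenfunction $\psi_e$ of $L_{es}$ in $S_{k,e}^+$, normalized in $L^2$, glue it with its reflection $\psi_e\circ\sigma_e$ into a $\sigma_e$-symmetric function on $S_{2k,e}$ lying in $H^1_0$ (both pieces vanish on $r_e$), and extend it by rotation invariance to a $k$-invariant $\tilde\Psi_e$. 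By the identity above, $Q_{u,\Om}(\tilde\Psi_e,\tilde\Psi_e)=2k\,\l_1(L_{es},S_{k,e}^+)$. Hence, as soon as $\tilde\Psi_e$ is $L^2$-orthogonal to the two negative eigenfunctions $\varphi_1,\varphi_2$, the bound $m_k(u)\le 2$ forces $\l_1(L_{es},S_{k,e}^+)\ge 0$, and the reduction lemma closes the argument.

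The main obstacle is precisely this orthogonality: $\tilde\Psi_e$ must be orthogonal to both $\varphi_1$ and $\varphi_2$, i.e. two scalar conditions, whereas the symmetric construction leaves only the rotation angle $\psi$ as a free parameter — in contrast to Proposition \ref{prop-3.2}, where the weights $A_e,B_e$ supplied a second parameter. The obstruction is genuine: passing to an unbalanced combination $A_e\psi_e-B_e(\psi_e\circ\sigma_e)$ restores the weight parameter but destroys the $\sigma_e$-symmetry, reintroducing into $Q_{u,\Om}$ a term proportional to $(A_e^2-B_e^2)\int_{S_{k,e}^+}\bigl(V_{es}-f'(|x|,u)\bigr)\psi_e^2$ of uncontrolled sign, so the clean identity $Q_{u,\Om}(\tilde\Psi_e,\tilde\Psi_e)=2k\,\l_1(L_{es},S_{k,e}^+)$ is lost.

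This is where the sectors of amplitude $\frac{\pi}{2k}$ enter. I would manufacture the missing parameter by subdividing each semisector along its bisector and building, from first eigenfunctions on these quarter-sectors, a one-parameter family of $\sigma_e$-symmetric test functions for which the identity $Q_{u,\Om}=Q_{es,\Om}$ still holds; together with the rotation this produces a two-parameter family indexed by $(\psi,t)$. A two-dimensional intermediate-value (degree) argument then yields a pair $(\psi,t)$ realizing both orthogonalities simultaneously, after which $m_k(u)\le 2$ delivers the required nonnegativity. Checking that the gluing along the new $\frac{\pi}{2k}$-cuts keeps the test functions in $H^1_0$, that all cross terms between quarter-sectors vanish, and that the admissible limits of $t$ feed the sign information back to $\l_1(L_{es},S_{k,e}^+)$ is the delicate part, and is the reason the case $m_k(u)=2$ is substantially harder than $m_k(u)\le 1$.
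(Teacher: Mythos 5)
Your preparatory reductions are correct and in fact mirror the paper's toolkit: the $L_{es}$-analogue of Lemma \ref{lemma-fine-dim} (testing $L_ew_e=0$ against $w_e^-$, then using $V_e\le V_{es}$ from \eqref{eq:confronto-V} so that $Q_{es,S_{k,e}^+}(w_e^-,w_e^-)\le Q_{e,S_{k,e}^+}(w_e^-,w_e^-)=0$) is sound, the identity $Q_{u,S_{2k,e}}(v,v)=Q_{es,S_{2k,e}}(v,v)$ for $\sigma_e$-symmetric $v$ is exactly the mechanism the paper exploits, and your computation that an unbalanced combination $A\psi_e-B(\psi_e\circ\sigma_e)$ leaves an error term proportional to $A^2-B^2$ of uncontrolled sign is accurate — it correctly explains why eigenfunction gluing cannot supply the second parameter. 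But the central step, producing one direction with $\l_1(L_{es},S_{k,e}^+)\ge 0$, is left to an unexecuted degree argument whose proposed mechanism fails on domain monotonicity: a $\sigma_e$-symmetric test function assembled from first eigenfunctions of the two quarter-sectors $q_1,q_2$ of amplitude $\frac{\pi}{2k}$ has, after the orthogonality conditions and $m_k(u)\le 2$, a quadratic form controlled by a positive combination $s^2\l_1(L_{es},q_1)+t^2\l_1(L_{es},q_2)$, so at best you conclude $\l_1(L_{es},q_i)\ge 0$ for some quarter-sector. Since first eigenvalues increase when the domain shrinks, nonnegativity on a quarter-sector is strictly weaker than $\l_1(L_{es},S_{k,e}^+)\ge 0$, which is what your reduction lemma needs; no choice of $t$ or boundary limit repairs this, and the degree count itself (continuity and nonvanishing of the normalizing weights, a nonzero winding of the map $(\psi,t)\mapsto(\langle\cdot,\varphi_1\rangle,\langle\cdot,\varphi_2\rangle)$) is never verified.

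The paper closes the gap by a different device. It splits on the set $S_*$ of \eqref{eq:defS}. When $S_*=\emptyset$ it assumes every $w_e$ changes sign and builds the two-parameter family out of $w_e$ \emph{itself}: the disjointly supported, $\sigma_e$-symmetric rearrangements $w_e^1,w_e^2$ of \eqref{eq:w1e2} each separately satisfy $Q_{u,S_{2k,e}}(w_e^i,w_e^i)\le 0$ (test $L_ew_e=0$, then use \eqref{eq:confronto-V} and the symmetry), so \emph{arbitrary} weights $A_e,B_e$ preserve $Q\le 0$ while buying orthogonality to $\varphi_1$, the angle sweep with $\tilde\xi_{\pi/k}=-\tilde\xi_0$ gives orthogonality to $\varphi_2$ by the intermediate value theorem, and then $m_k(u)\le2$ forces $Q=0$, hence $L_u\xi_{e'}=0$, contradicted by unique continuation via the cut-off \eqref{hat-xi}. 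When $S_*\neq\emptyset$, at a symmetric direction $e_\psi$ the odd extension $\tilde g_\psi$ of the first eigenfunction on $S_{k,\psi}^+$ replaces $\varphi_2$ as the second negative direction, and one studies the bisector direction $\psi+\frac{\pi}{2k}$ — its full semisector $S_{k,\psi+\frac{\pi}{2k}}^+$, not quarter-sectors — where the orthogonalities to $\varphi_1$ and $\tilde g_\psi$ come for free from parity (even versus odd with respect to $r_\psi$). So the $\frac{\pi}{2k}$ geometry enters as a new symmetry axis supplying parity, not as a source of an extra deformation parameter; the idea your plan is missing is to manufacture the test functions from the sign-changing $w_e$ rather than from eigenfunctions, which decouples the weights from the symmetry constraint.
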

This proposition, together with Propositions \ref{prop-2.1} and \ref{prop-2.2} then  implies Theorem \ref{teo:2} and concludes this case.

\begin{proof}
Let us denote by
$\varphi_1$ and  $\varphi_2$ two k-invariant
eigenfunctions of the linearized operator $L_u$, orthogonal in $L^2(\Omega)$, corresponding to the two negative eigenvalues.\\
Define 
\begin{equation}\label{eq:defS}
S_*:=\{e\in\mathcal S : w_e\equiv 0 \hbox{ in }S_{k,e}^+ \hbox{ and }  \inf _{\psi\in C^1_0(S_{k,e}^+)} Q_{u, S_{k,e}^+}(\psi,\psi)<0
%\l_1(L_u, S_{k,e}^+)<0
\}
\end{equation}
\noindent {\bf Case 1:} $S_*=\emptyset$\\
Assume  by contradiction that for every $e\in \mathcal S$ the function $w_e$ changes sign in $S_{k,e}^+$. In this case for every $e\in \mathcal S$ we can define the functions
\begin{equation}\label{eq:w1e2}
w^1_e(x)=w_e^+\chi_{S_{k,e}^+}-w_e^- \chi_{S_{k,e}^-}\ \ \ \ 
w^2_e(x)=-w_e^-\chi_{S_{k,e}^+}+w_e^+ \chi_{S_{k,e}^-}\end{equation}
%where $\chi_{D}$ denotes the the characteristic function of a set $D\subseteq \Omega$, 
which are supported in $S_{2k,e}$. \\
Observe that $w^1_e$ and $w^2_e$ are both nonnegative in $S_{2k,e}$, have disjoint supports and satisfy $w_e^i=0$ on $\partial S_{k,e}^\pm$ and also on $\partial S_{2k,e}$. Moreover 
$w_e^i(\sigma_e(x))=w_e^i(x)$ for $x\in S_{k,e}^+$.\\
Further, since $w_e$ satisfies $L_ew_e=0$ in $S_{2k,e}$ and $w_e=0$ on $\partial S_{2k,e}$, multiplying by $w_e^+\chi_{S_{k,e}^+}+w_e^- \chi_{S_{k,e}^-}$ and integrating over $S_{2k,e}$ we obtain
\[
\begin{split}
0=&\int_{S_{2k,e}}\nabla w_e\nabla \left(w_e^+\chi_{S_{k,e}^+}+w_e^- \chi_{S_{k,e}^-}\right)- V_e(x)w_e\left(w_e^+\chi_{S_{k,e}^+}+w_e^- \chi_{S_{k,e}^-}\right)\\
=& \int_{S_{2k,e}}|\nabla (w_e^+\chi_{S_{k,e}^+})|^2+|\nabla (w_e^- \chi_{S_{k,e}^-})|^2-V_e(x)\left( (w_e^+\chi_{S_{k,e}^+})^2+(
w_e^- \chi_{S_{k,e}^-})^2\right)\\
=&\int_{S_{2k,e}}|\nabla w_e^1|^2-V_e(x)(w_e^1)^2	.
\end{split}\]
Then, \eqref{eq:confronto-V}, together with $w_e^i(\sigma_e(x))=w_e^i(x)$,
gives
\begin{align*}
&Q_{u,S_{2k,e}}(w_e^1,w_e^1)=Q_{es,S_{2k,e}}(w_e^1,w_e^1)=\int_{S_{2k,e}}|\nabla w_e^1|^2-V_{es}(x)(w_e^1)^2\\
&\le \int_{S_{2k,e}}|\nabla w_e^1|^2-V_{e}(x)(w_e^1)^2=0
\end{align*}
where we are denoting by $Q_{es,D}$ the quadratic form associated with the operator $L_{es}$ in $D$.
Similarly we can show
\[Q_{u,S_{2k,e}}(w_e^2,w_e^2)\leq0.\]
Let us denote by $\tilde w_e^i$ the extension of $w_e^i$ in all $\Omega$ by the invariance by rotations of angle $\frac {2\pi}k$.
For every $e\in \mathcal S$ we can define the function
\[\xi_e:=\left(\frac {\int _{  S_{2k,e}} w_{e}^2\varphi_1  }{\int _{  S_{2k,e}} w_{e}^1\varphi_1  }\right)^{\frac 12} w_e^1-\left(\frac {\int _{  S_{2k,e}} w_{e}^1\varphi_1}{\int _{  S_{2k,e}} w_{e}^2\varphi_1  } \right)^{\frac 12} w_{e}^2=A_e w_e^1-B_e w_{e}^2\]
which is supported in $S_{2k,e}$ and denote by $\tilde \xi_e$ its extension in all of $\Omega$ by the invariance  
by rotations of angle $\frac {2\pi} k$. Then, by construction
\[\int_{\Omega} \tilde  \xi_e\varphi_1=k   \int _{S_{2k,e}}\xi_e\varphi_1=0\]
and
\begin{equation}\label{eq:forma-negativa}
Q_{u,\Omega}(\tilde \xi_e,\tilde \xi_e)=k
Q_{u,S_{2k,e}}(\xi_e,\xi_e)=kA_e^2Q_{u,S_{2k,e}}(w_e^1,w_e^1)+kB_e^2Q_{u,S_{2k,e}}(w_e^2,w_e^2)
\leq 0\end{equation}
since $w_e^1$ and $w_e^2$ have disjoint supports. Assume $e=e_\psi=(\cos \psi,\sin \psi)$, with $\psi\in [0,2\pi)$. 
Let us consider the map
\begin{equation}\label{eq:h-psi}
h: [0,2\pi)\to \R \ \ , \ \ h(\psi):=\int _{S_{2k,\psi}} \xi_\psi \varphi_2 \ dx \end{equation}
which is continuous with respect to the angle $\psi$.  Here $\xi_\psi$ stands for $\xi_{e_\psi}$.\\
We want to prove that there exists $\psi\in [0,\frac \pi k)$ such that $h(\psi)=0$. 
If $h(0)=0$ then we are done.
If else $h(0)\neq 0$ as in the proof of Proposition \ref{prop-3.2} we have $S_{k,\frac \pi k}^-=S_{k,0}^+$
and $S_{k,\frac \pi k}^+= R_{\frac {2\pi}k}(S_{k,0}^-)$.
Observe that, for $x\in S_{k,\frac \pi k}^-$,  
\[w_{\frac \pi k}(x)=u(\sigma_{\frac \pi k}(x))-u(x) =u(R_{\frac {2\pi}k}(\sigma_0(x)))-u(x)=u(\sigma_0(x))-u(x)=
w_0(x)\]
since $u$ is $k$-invariant, while, for $x\in S_{k,\frac \pi k}^+$
\[w_{\frac \pi k}(x)=u(\sigma_{\frac \pi k}(x))-u(x) =u(\sigma_{0}(y))-u( R_{\frac {2\pi}k}( y))=w_0(y)\]
for $y\in S_{k,0}^-$.
From this it follows that 
\[
\begin{split}
\tilde w_{\frac \pi k}^1(x)=w_{\frac \pi k}^+\chi_{S_{k,\frac \pi k}^+}-w_{\frac \pi k}^-\chi_{S_{k,\frac \pi k}^-}
%=-w_\psi^-\chi_{S_{k,\psi}^+}+w_\psi^+\chi_{S_{k,\psi}^-}
=\tilde w_{0}^2
\end{split}\]
and %, in the same manner
\[\tilde w_{\frac \pi k}^2=\tilde w_{0}^1 . \]
This implies that
\[
A_{e_{\frac \pi k}}=\left(\frac {\int _{  S_{2k,\frac \pi k }} w_{\frac \pi k}^2\varphi_1  }{\int _{  S_{2k,\frac \pi k}} w_{\frac \pi k}^1\varphi_1  }\right)^{\frac 12} =\left(\frac {\int _{  S_{2k,0}} w_{\psi}^1\varphi_1  }{\int _{  S_{2k,0}} w_{\psi}^2\varphi_1  }\right)^{\frac 12} =B_{e_{0}}
\]
where we used the $k$-invariance of $\varphi_1$, and, in the same manner 
\[B_{e_{\frac \pi k}}=A_{e_{0}}.\]
It is then easy to see that $\tilde \xi_{\frac \pi k}=-\tilde \xi_0$, that implies, together with $\varphi_1(R_{\frac {2\pi}k}(x))=\varphi_1(x)$, that 
$h(\frac \pi k)=-h(0)$.

Since $h$ is a continuous function then it has one zero in $(0,\frac \pi k)$. This means that there exists a direction $e'\in \mathcal S$ such that
$\tilde \xi_{e'}$ is orthogonal to $\varphi_2$ in $L^2(\Omega)$, since $\int_\Omega \tilde \xi_{e'}\varphi_2 =k\int_{S_{2k,e'}} \xi_{e'}\varphi_2 =0$. 
Since $\varphi_1, \varphi_2$ and  $\tilde \xi_{e'}$ are $k$-invariant and  $m_k(u)\leq 2$ then 
\[Q_{u,\Omega}(\tilde \xi_{e'},\tilde \xi_{e'})\geq 0\]
which, together with \eqref{eq:forma-negativa} implies that 
$\tilde\xi_{e'}$ is a minimizer for $Q_{u,\Omega}$ in the space of functions $k$-invariant and hence, by the principle of symmetric criticality of \cite{Palais}, $\tilde\xi_{e'}$ weakly solves
$L_u\tilde \xi_{e'}=0$ in $\Omega$ and also $L_u \xi_{e'}=0$ in $S_{2k,e'}$. Recall that $\xi_{e'}=0$ on $\partial S_{2k,e'}$ and since $\xi_{e'}(\sigma_{e'}(x))=\xi_{e'}(x)$ for any $x\in S_{2k,e'}$ (by the symmetry of $w_{e'}^i$), then $\frac {\partial \xi_{e'}}{\partial \nu}=0$ on $\Gamma_{2,e'}$, where $\frac {\partial}{\partial \nu}$ denotes the inner normal derivative on $\Gamma_{2,e'}$. 
But, this implies that the function
\begin{equation}\label{hat-xi}
\widehat \xi_{e'}(x)=
\begin{cases}
\xi_{e'} (x) & \hbox{ for } x\in S_{k,e'}^+\\
0& \hbox{ for } x\in S_{k,e'}^-
\end{cases}\end{equation}
is also a weak solution to $L_u\widehat \xi_e=0$ in $S_{2k,e'}$, contradicting the unique continuation principle for this equation.
The only possibility is that $\xi_{e'}\equiv 0$ which is not possible since we are assuming that $w_{e'}$ changes sign in $S_{k,e'}^+$.

\

\noindent {\bf Case 2: }$S_*\neq \emptyset$.\\
Let us assume that $e=e_\psi\in S_*$, so that $w_\psi\equiv 0$ in $S_{k,\psi}^+$. By the symmetry of $u$ we also have that
$\varphi_1(\sigma_\psi(x))=\varphi_1(x)$.\\ % for $x\in S_{k,\psi}^+$.\\
Let us denote by $g_\psi$ the unique first positive eigenfunction of $L_u$ in $S_{k,\psi}^+$ with Dirichlet boundary conditions, which corresponds to $\l_1(L_u, S_{k,\psi}^+)$ normalized such that $\int_{S_{k,\psi}^+}g_\psi^2=1$, by 
$\widehat g_\psi$ its odd extension on $S_{2k,\psi}$, and by $\tilde g_\psi$ its extension in $\Omega$ by the invariance by rotations of angle $\frac{2\pi}k$.\\
It is easy to see that
\[\begin{split}
&\int_\Omega \tilde g_\psi\varphi_1\ dx=k\int_{S_{2k,\psi}} \widehat g_\psi\varphi_1\ dx=k\left(\int_{S_{k,\psi}^+}g_\psi\varphi_1\ dx+\int_{S_{k,\psi}^-}(-g_\psi(\sigma_\psi(x)))   \varphi_1\ dx\right)\\
&=k\left(\int_{S_{k,\psi}^+}g_\psi\varphi_1\ dx-\int_{S_{k,\psi}^+}g_\psi(x)   \varphi_1(\sigma_\psi(x))\ dx\right)\\
&=k\left(\int_{S_{k,\psi}^+}g_\psi\varphi_1\ dx-\int_{S_{k,\psi}^+}g_\psi(x)   \varphi_1(x)\ dx\right)=
0.
\end{split}\]
Moreover, by the symmetry of $u$ in $S_{k,\psi}^+$ we have
\[Q_{u,\Omega}(\tilde g_\psi,\tilde g_\psi)=k Q_{u, S_{2k,\psi}}(\widehat g_\psi,\widehat g_\psi)=2k Q_{u, S_{k,\psi}^+}( g_\psi, g_\psi)=2k \l_1(L_u, S_{k,\psi}^+)<0
\]
since we are assuming that $e_\psi\in S_*$.\\
To conclude we want to show that, when we consider the direction $e_{\psi+\frac \pi{2k}}$ (which divides $S_{k,\psi}^+$ into two equal parts), then we must have either that $w_{\psi+\frac \pi{2k}}>0$ in $S_{k,\psi+\frac \pi{2k}}^+$ either $w_{\psi+\frac \pi{2k}}<0$ in $S_{k,\psi+\frac \pi{2k}}^+$ or, else, that $w_{\psi+\frac \pi{2k}}\equiv 0$ in $S_{k,\psi+\frac \pi{2k}}^+$ and $\l_1(L_u, S_{k,\psi+\frac \pi{2k}}^+)\geq 0$.\\
i) First we show that whenever $w_{\psi+\frac \pi{2k}}\equiv 0$ in $S_{k,\psi+\frac \pi{2k}}^+$ then $\l_1(L_u, S_{k,\psi+\frac \pi{2k}}^+)\geq 0$.\\
Suppose by contradiction that $\l_1(L_u, S_{k,\psi+\frac \pi{2k}}^+)< 0$. Since $u$ is symmetric with respect to $r_{\psi+\frac \pi{2k}}$, namely $u(x)=u(\sigma_{\psi+\frac \pi{2k}})$ in $S_{k,\psi+\frac \pi{2k}}^+$, and since $r_{\psi+\frac \pi{2k}}$ divides the sector $S_{k,\psi}^+$ into two equal parts, then the first eigenfunction $g_\psi$ in $S_{k,\psi}^+$ is symmetric with respect $r_{\psi+\frac \pi{2k}}$. \\
Next we let $g_{\psi+\frac \pi{2k}}$ be the first positive eigenfunction of the linear operator $L_u$ in the sector $S_{k,\psi+\frac \pi{2k}}^+$
with Dirichlet boundary conditions, which corresponds to $\l_1(L_u, S_{k,\psi+\frac \pi{2k} }^+)$ normalized such that $\int_{S_{k,\psi+\frac \pi{2k}}^+}g_{\psi+\frac \pi{2k}}^2=1$, by 
$\widehat g_{\psi+\frac \pi{2k}}$ its odd extension on $S_{2k,\psi+\frac \pi{2k}}$, and by $\tilde g_{\psi+\frac \pi{2k}}$ its extension in $\Omega$ by the invariance by rotations of angle $\frac{2\pi}k$. Again by the symmetry of $u$ we have that $g_{\psi+\frac \pi{2k}}(\sigma_{\psi+\frac \pi{k}}(x))=g_{\psi+\frac \pi{2k}}(x)$ in $S_{k, \psi+\frac {2\pi}k} ^+$.
Then
\[
\begin{split}
&\int_{S_{2k,\psi+\frac \pi{2k}}}\widehat g_{\psi+\frac \pi{2k}} \tilde g_\psi\ dx=\int_{S_{k,\psi+\frac \pi{2k}}^+} g_{\psi+\frac \pi{2k}} \tilde g_\psi\ dx+\int_{S_{k,\psi+\frac \pi{2k}}^-}\widehat g_{\psi+\frac \pi{2k}} \widehat g_\psi\ dx
\end{split}\]
where 
\[\int_{S_{k,\psi+\frac \pi{2k}}^-}\widehat g_{\psi+\frac \pi{2k}} \widehat g_\psi\ dx=0\]
since $g_\psi$ is odd with respect $r_\psi$ while $\widehat g_{\psi+\frac \pi{2k}}$ is even with respect to $r_\psi$. Next
\[\begin{split}
&\int_{S_{k,\psi+\frac \pi{2k}}^+} g_{\psi+\frac \pi{2k}} \tilde g_\psi\ dx=\int_{(S_{k,\psi+\frac \pi{2k}}^+)\cap S_{k,\psi}^+} g_{\psi+\frac \pi{2k}} g_\psi\ dx+\int_{(S_{k,\psi+\frac \pi{2k}}^+)\setminus S_{k,\psi}^+} g_{\psi+\frac \pi{2k}} \tilde g_\psi\ dx\\
& =\int_{(S_{k,\psi+\frac \pi{2k}}^+)\cap S_{k,\psi}^+} g_{\psi+\frac \pi{2k}} g_\psi\ dx+\int_{(S_{k,\psi+\frac \pi{2k}}^+)\cap S_{k,\psi}^+} g_{\psi+\frac \pi{2k}}(\sigma_{\psi+\frac \pi k}(x)) \tilde g_\psi  (\sigma_{\psi+\frac \pi k}(x)) \ dx\\
& = \int_{(S_{k,\psi+\frac \pi{2k}}^+)\cap S_{k,\psi}^+} g_{\psi+\frac \pi{2k}} g_\psi\ dx+\int_{(S_{k,\psi+\frac \pi{2k}}^+)\cap S_{k,\psi}^+} g_{\psi+\frac \pi{2k}}(x) (- g_\psi  (x)) \ dx=0
\end{split}
\]
since $\tilde g_\psi(x)=\widehat g(R_{-\frac {2\pi}k}(x))=-g_\psi(\sigma_{\psi+\frac \pi k}(x))$ for $x\in S_{k,\psi+\frac \pi{2k}}^+\setminus S_{k,\psi}^+$.
Similarly
\[\int_{S_{2k,\psi+\frac \pi{2k}}}\widehat g_{\psi+\frac \pi{2k}} \varphi_1\ dx=0\]
so that $\tilde g_{\psi+\frac \pi{2k}}$ is orthogonal to $\varphi_1$ and to $\tilde g_\psi$ in $L^2(\Omega)$. By assumption then we have 
\[\begin{split}
&Q_{u,\Omega}(\tilde g_{\psi+\frac \pi{2k}},\tilde g_{\psi+\frac \pi{2k}})=k Q_{u,S_{2k, \psi+\frac \pi{2k}}}(\widehat g_{\psi+\frac \pi{2k}},\widehat  g_{\psi+\frac \pi{2k}})\\
&=2k Q_{u,S_{k, \psi+\frac \pi{2k}}^+}(g_{\psi+\frac \pi{2k}},g_{\psi+\frac \pi{2k}})=2k\l_1(L_u, S_{k,\psi+\frac \pi{2k}}^+)< 0
\end{split}
\]
which contradicts the bound on the $k$-Morse index. This shows that whenever $w_{\psi+\frac \pi {2k}} \equiv 0$ in $S_{k, \psi+\frac \pi {2k}}^+$, then $\l_1(L_u, S_{k, \psi+\frac \pi {2k}}^+)\geq 0$ and concludes this part. \\

\

ii) It lasts to prove that whenever $w_{\psi+\frac \pi {2k}}\neq 0$ 
in $S_{k, \psi+\frac \pi {2k}}^+$, then  the function $w_{\psi+\frac \pi {2k}}$ cannot change sign in $S_{k, \psi+\frac \pi {2k}}^+$.
Assume by contradiction that it changes sign. We can then define the functions $w_e^1$ and $w_e^2$ as in \eqref{eq:w1e2} relative to the direction $e=e_{\psi+\frac \pi{2k}}$, which are nonnegative, symmetric with respect to $r_{\psi+\frac \pi{2k}}$ and satisfies 
\[Q_{u,S_{2k,\psi+\frac \pi{2k}}}(w_e^i,w_e^i)\leq 0.\]
Moreover, by the symmetry of $u$ we have that $w_{\psi+\frac \pi{2k}}(\sigma_\psi(x))=w_{\psi+\frac \pi{2k}}(x)$ which implies that 
$w_e^i(\sigma_\psi(x))=w_e^i(x)$. Next we construct as before the function $\xi_e$, and $\tilde \xi_e$ which, again satisfies by construction $\xi_e(\sigma_\psi(x))=\xi_e(x)$ and also
\begin{equation}\label{f-neg}Q_{u,\Omega}(\tilde \xi_e,\tilde\xi_e)\leq 0.\end{equation}
Further $\tilde \xi_e$ is orthogonal to $\varphi_1$ by construction and $\tilde \xi_e$ is orthogonal to $\tilde g_\psi$ since the first is even with respect to $r_\psi$ while the second is odd. 
Since $\varphi_1$ and $\tilde g_\psi$ and  $\tilde \xi_{e}$ are $k$-invariant and  $m_k(u)\leq 2$ then 
\[Q_{u,\Omega}(\tilde \xi_{e},\tilde \xi_{e})\geq 0\]
which, together with \eqref{f-neg} implies that 
$\tilde\xi_{e}$ is a minimizer for $Q_{u,\Omega}$ in the space of $k$-invariant functions, and, as before, $\tilde\xi_{e}$ solves
$L_u\tilde \xi_{e}=0$ in $\Omega$ and also $L_u \xi_{e}=0$ in $S_{2k,e}$. Then we can conclude defining the function $\widehat \xi$ as in \eqref{hat-xi}
as in the end of the proof of case 1 showing that it is not possible that $w_{\psi+\frac \pi{2k}}$ changes sign in $S_{k,\psi+\frac \pi{2k}}$. 
\end{proof}

\

Of course Remark \ref{rem:1} holds also in this case.

\section{Unbounded domains}
In this section we extend the previous results to the case of an unbounded radial domain $\Omega$, where $\Omega$ either coincides with $\R^2$ or $\Omega=\R^2\setminus B$, extending some ideas in \cite{GPW}
to the case of functions which are $k$-invariant.\\
%We say that a function $v$ is $k$-invariant if it satisfies
%\begin{equation}\tag{\ref{k-inv}}
%v(x)=v\left(R_{\frac {2\pi}k}(x)\right) \ \ \text{ for all }x\in \Omega
%\end{equation}
%where, as before $R_{\frac {2\pi}k}$ denotes a rotation of angle $\frac{2\pi}k$ in counterclockwise direction centered at the origin. Next we denote by $C^1_k$ (or $H^1_k$) the subspace of $C^1_0(\Omega)$ of functions that satisfy \eqref{k-inv} (or the subspace of $H^1(\R^2)$ or of $H^1_0(\R^2\setminus B)$ of functions that satisfy \eqref{k-inv}). Observe that the boundary conditions \eqref{eq:bc} when it is needed it is contained in the definition of the spaces.\\
As before we denote by $C^1_{0,k}$ the subset of $C^1_0(\Omega)$ of functions that satisfy \eqref{k-inv}.\\
To prove the results we only highlight the difference between the bounded case. First we can prove an extension of Proposition \ref{prop-2.2}, namely
\begin{proposition}\label{prop-5.1}
Assume $u$ is a solution to \eqref{P} and possibly \eqref{eq:bc} that satisfies \eqref{k-inv} and such that 
$|\nabla u|\in L^2(\Omega)$.
Suppose further that there exists $e\in \mathcal S$ such that $w_e\equiv 0$ in $S_{k,e}^+$ and
\[\inf_{\phi\in C^1_0(S_{k,e}^+)}Q_{u, S_{k,e}^+}(\phi,\phi)\geq 0.\]
Then or $u$ is radial or it is strictly monotone in the angular variable in $S_{k,e}^+$.
\end{proposition}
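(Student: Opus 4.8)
The plan is to follow the proof of Proposition \ref{prop-2.2} line by line, replacing the eigenvalue argument (which is not available on an unbounded domain, since $L_u$ need not have a first eigenfunction there) by a cut-off energy estimate. As in that proof I would reduce to $e=(1,0)$, pass to polar coordinates $(r,\theta)$, and set $v:=u_\theta$ and $D:=S_{k,e}^+$. Differentiating \eqref{P} with respect to $\theta$ gives $L_u v=0$ in $D$, and the three pieces of $\partial D$ are treated exactly as before: $v=0$ on $\Gamma_{1,e}\subset\partial\Omega$ by \eqref{eq:bc}, $v=0$ on $\Gamma_{2,e}\subset r_e$ because $w_e\equiv0$ forces $u(\sigma_e(x))=u(x)$, and $v=0$ on $\Gamma_{3,e}\subset r_{\frac\pi k}$ because reflection in $r_e$ together with \eqref{k-inv} makes $u$ symmetric about $r_{\frac\pi k}$ as well. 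Hence $v=0$ on $\partial D$. If $v\equiv0$ then $u$ is radial and we are done, so I would assume $v\not\equiv0$ and aim to show that $v$ has a fixed sign in $D$.

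The genuinely new ingredient is the integrability furnished by $|\nabla u|\in L^2(\Omega)$. Since $|\nabla u|^2=u_r^2+r^{-2}u_\theta^2\ge r^{-2}v^2$, one gets $\int_D r^{-2}v^2\,dx<\infty$. Fixing a standard cut-off $\eta_R$ equal to $1$ on $B_R$, supported in $B_{2R}$, with $|\nabla\eta_R|\le C/R$, this yields
\[
\int_D v^2|\nabla\eta_R|^2\,dx\le \frac{C}{R^2}\int_{R\le|x|\le 2R}v^2\,dx\le 4C\int_{R\le|x|\le 2R}r^{-2}v^2\,dx\longrightarrow 0
\]
as $R\to\infty$. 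This decay of the cut-off error is exactly what will let the argument close in the absence of a first eigenfunction.

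Now suppose, by contradiction, that $v$ changes sign, so that $v^+\not\equiv0$ and $w:=v^-\le0$, $w\not\equiv0$. Testing $L_u v=0$ against $\eta_R^2 w\in C^1_0(D)$, and using $vw=w^2$ and $\nabla v\cdot\nabla w=|\nabla w|^2$ on $\{v<0\}$, gives
\[
\int_D \eta_R^2|\nabla w|^2 + 2\eta_R\,w\,\nabla w\cdot\nabla\eta_R - f'(|x|,u)\,\eta_R^2 w^2\,dx=0.
\]
Absorbing the cross term by Young's inequality and invoking the previous step together with the finiteness of $\int_D f'(|x|,u)\,w^2$ (which follows from the decay of $u$, hence of $f'(|x|,u)$, at infinity and from $\int_D r^{-2}w^2\,dx<\infty$) forces $\nabla w\in L^2(D)$ and, letting $R\to\infty$, $Q_{u,D}(w,w)=\int_D|\nabla w|^2-f'(|x|,u)w^2\,dx=0$. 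On the other hand $Q_{u,D}(\eta_R w,\eta_R w)\ge \inf_{\phi\in C^1_0(D)}Q_{u,D}(\phi,\phi)\ge0$ and, by the same estimates, $Q_{u,D}(\eta_R w,\eta_R w)\to Q_{u,D}(w,w)$; thus $w$ attains the value $0=\inf_{C^1_0(D)}Q_{u,D}$ along the approximating sequence $\eta_R w$.

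It follows that $w$ lies in the kernel of the nonnegative form $Q_{u,D}$: comparing $Q_{u,D}(\eta_R w+t\psi,\eta_R w+t\psi)\ge0$ with its limit for $\psi\in C^1_0(D)$ and expanding in $t$ yields $Q_{u,D}(w,\psi)=0$ for every $\psi\in C^1_0(D)$, i.e. $w$ weakly solves $L_u w=0$ in $D$. By elliptic regularity $w\in C^2(D)$, and since $w\le0$, $w\not\equiv0$ and $D$ is connected, the strong maximum principle gives $w<0$ throughout $D$; but then $v=u_\theta<0$ in $D$, contradicting $v^+\not\equiv0$. Hence $v$ cannot change sign, so $u_\theta$ is either positive or negative in $S_{k,e}^+$, which is the claimed strict angular monotonicity. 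The hard part is precisely this middle step: one cannot fall back on a first eigenfunction, so the whole weight of the argument rests on upgrading the cut-off test functions $\eta_R w$ to the true competitor $w=v^-$ in the form domain, and this hinges on the gradient bound $|\nabla u|\in L^2(\Omega)$ (to annihilate the cut-off error) together with enough decay of $f'(|x|,u)$ to keep the potential term $\int_D f'(|x|,u)\,w^2$ finite.
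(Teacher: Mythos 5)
Your strategy is the right one and is essentially the paper's own: the paper also reduces to showing that a signed part of $u_\theta$ weakly solves $L_u(\cdot)=0$ in $S_{k,e}^+$ by a cut-off argument powered by $|\nabla u|\in L^2(\Omega)$ (it defers to Proposition~2.5 and Lemma~2.3 of \cite{GPW}, whose mechanism is exactly your observation that $|\nabla u|^2\geq r^{-2}u_\theta^2$ makes the cut-off error $\int u_\theta^2|\nabla \eta_R|^2$ vanish as $R\to\infty$), and then concludes by unique continuation, equivalently the strong maximum principle as you do. There is, however, one genuinely unjustified step in your write-up: you claim that $\int_D f'(|x|,u)\,w^2\,dx<\infty$ ``follows from the decay of $u$, hence of $f'(|x|,u)$, at infinity.'' No decay of $u$ at infinity is assumed in the proposition, and none can be derived from the hypotheses: $|\nabla u|\in L^2(\Omega)$ does not force $u$, let alone $f'(|x|,u)$, to tend to zero, and the paper never uses such a property. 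As written, therefore, the intermediate conclusions ``$\nabla w\in L^2(D)$'' and ``$Q_{u,D}(w,w)=0$'' are not established.

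Fortunately this detour is dispensable, and the repair uses only what you already wrote. Inserting your tested identity
\begin{equation*}
\int_D \eta_R^2|\nabla w|^2 + 2\eta_R\,w\,\nabla w\cdot\nabla\eta_R - f'(|x|,u)\,\eta_R^2 w^2\,dx=0
\end{equation*}
into the expansion of the quadratic form cancels every term except the cut-off error:
\begin{equation*}
Q_{u,D}(\eta_R w,\eta_R w)=\int_D|\nabla(\eta_R w)|^2-f'(|x|,u)\eta_R^2w^2\,dx=\int_D w^2|\nabla\eta_R|^2\,dx\longrightarrow 0,
\end{equation*}
by your second step; no finiteness of $\int_D f'(|x|,u)w^2$ and no information on $\nabla w$ globally are needed. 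Your final argument is then self-sufficient: expanding $Q_{u,D}(\eta_R w+t\psi,\eta_R w+t\psi)\geq 0$ in $t$ (nonnegativity extends from $C^1_0(D)$ to $\eta_R w$ by density) and using that $Q_{u,D}(\eta_R w,\psi)=Q_{u,D}(w,\psi)$ exactly, for $R$ large, by locality, yields $Q_{u,D}(w,\psi)=0$ for all $\psi\in C^1_0(D)$; this never requires $w$ itself to lie in the form domain nor $Q_{u,D}(w,w)$ to be defined. With this modification your proof is correct and coincides with the argument the paper imports from \cite{GPW}.
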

\begin{proof}
%{\taglia As said before a solution $u\in C^1_k$ (or in $H^1_k$) by elliptic regularity theory belong to $C^{3,\beta}(\bar \Omega)$, for some $\beta>0$.}
As in the proof of Proposition \ref{prop-2.2} we have that the angular derivative $u_\theta$ satisfies
\[
\begin{cases}
-\Delta u_\theta=f'(|x|,u)u_\theta & \hbox{ in }S_{k,e}^+\\
u_\theta=0 & \hbox{ on }\partial S_{k,e}^+
\end{cases}\]
We claim that either $u_\theta\equiv 0$ in $S_{k,e}^+$ showing that $u$ is radial or, else, $u_\theta$ has a sign in $S_{k,e}^+$.\\
Using a radial cut-off function $\xi_R$ supported in $\Omega\cap B_R(0)$ and reasoning exactly as in the proof of Proposition 2.5 of \cite{GPW} we can prove that in this case the function $u_\theta^+$ satisfies
\[\int _{S_{k,e}^+}\left(\nabla u_\theta^+\nabla \phi-f'(|x|,u)u_\theta^+\phi\right)\, dx=0\]
for any $\phi\in C^{\infty}_0(S_{k,e}^+)$, meaning that $u_\theta^+$, if it is nonzero, solves
\[-\Delta u_\theta^+=f'(|x|,u)u_\theta^+ \ \ \hbox{ in }S_{k,e}^+\]
and by the unique continuation principle or $u_\theta^+\equiv 0$ or $u_\theta^+>0$ in $S_{k,e}^+$ concluding the proof. 
\end{proof}

By the symmetry assumption on $u$, whenever $u$ is nonradial then it is strictly monotone in the polar angle $\theta$ also in the sector $S_{k,e}^-$.

\

Moreover, when $f$ or $f'$ are convex in the second variable we can extend also Proposition \ref{prop-2.1}, getting the following result:
\begin{proposition}\label{prop-5.2}
Assume $u$ is a solution to \eqref{P} and possibly \eqref{eq:bc} that satisfies \eqref{k-inv} and such that 
$|\nabla u|\in L^2(\Omega)$ and $m_k(u)<\infty$.
Suppose furthermore that  $f$ or $f'$ are convex in the second variable and that there exists a direction $e\in \mathcal S$ s.t. 
\begin{equation}\nonumber
w_e>0 \ \ \hbox{ in }S_{k,e}^+ \ \ \hbox{ or }\ w_e<0 \ \ \hbox{ in }S_{k,e}^+.
\end{equation}
Then there exists another direction $e'\in \mathcal S$ s.t. $w_e\equiv 0$ and
\begin{equation}\label{eq:forma-positiva}
\inf _{\phi\in C^1_0( S_{k,e}^+)}Q_{u, S_{k,e}^+}(\phi,\phi)\geq0.
\end{equation}

\end{proposition}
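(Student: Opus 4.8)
The plan is to mimic the moving-direction argument of Proposition~\ref{prop-2.1}, but since we are on an unbounded domain we must avoid any reference to the first eigenvalue or first eigenfunction, which need not exist. Concretely, I would assume without loss of generality that $w_e>0$ in $S_{k,e}^+$ with $e=e_0=(1,0)$, and introduce exactly the same sup as in \eqref{tilde-psi}, namely
\[
\tilde\psi=\sup\Big\{\psi\in[0,\tfrac\pi k): w_{\psi'}\geq 0 \hbox{ in } S_{k,\psi'}^+ \hbox{ for all } \psi'\in[0,\psi)\Big\}.
\]
As in the bounded case, the $k$-invariance of $u$ gives $w_{\frac\pi k}=-w_e<0$, which forces $\tilde\psi<\frac\pi k$, and by continuity $w_{\tilde\psi}\geq 0$ in $S_{k,\tilde\psi}^+$. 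The target is to show $w_{\tilde\psi}\equiv 0$ in $S_{k,\tilde\psi}^+$ together with \eqref{eq:forma-positiva}, because these two facts are exactly the hypotheses of Proposition~\ref{prop-5.1}.

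The key structural input, and the place where the convexity of $f$ or $f'$ enters, is the differential inequality satisfied by $w_\psi$. When $f$ is convex we have $L_u w_\psi\geq 0$ in $S_{k,\psi}^+$ as in \eqref{eq:ineq-we}; when $f'$ is convex we instead compare $L_{\psi}$ with $L_{\psi s}$ via \eqref{eq:confronto-V}. In either case I would argue that the quadratic-form inf over $C^1_0(S_{k,\tilde\psi}^+)$ is nonnegative: if it were negative, one could use a test function built from $w_{\tilde\psi}$ (or its sign-definite part) to manufacture, via the $k$-invariance and the reflection symmetry across $r_e$ in each sector, a $k$-invariant function on which $Q_{u,\Omega}$ is negative, and then combine it with translated/rotated copies over the $k$ sectors to exceed the $k$-Morse index bound $m_k(u)<\infty$. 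This is the analogue of the Morse-index budgeting used in Proposition~\ref{prop-3.2} and Proposition~\ref{prop-4.1}, and I expect the unbounded setting to require the cut-off argument of Proposition~2.5 of \cite{GPW} to guarantee that these test functions are genuinely admissible in $C^1_0$ with finite energy, which is where the hypothesis $|\nabla u|\in L^2(\Omega)$ is indispensable.

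Assuming the inf is nonnegative, I would then rule out $w_{\tilde\psi}\not\equiv 0$ by the same contradiction as in Proposition~\ref{prop-2.1}: if $w_{\tilde\psi}\geq 0$ and is not identically zero, then by the strong maximum principle (applied to $L_{\tilde\psi}w_{\tilde\psi}=0$, using that $w_{\tilde\psi}$ vanishes on $\partial S_{k,\tilde\psi}^+$) it is strictly positive in the interior. Choosing a compact core $K\subset S_{k,\tilde\psi}^+$ whose complement in $S_{k,\tilde\psi+\e}^+$ has measure small enough for the weak maximum principle to hold for $L_{\tilde\psi+\e}$ on $S_{k,\tilde\psi+\e}^+\setminus K$, one gets $w_{\tilde\psi+\e}>0$ on all of $S_{k,\tilde\psi+\e}^+$ for small $\e>0$, contradicting the maximality of $\tilde\psi$. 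Hence $w_{\tilde\psi}\equiv 0$, and taking $e'=e_{\tilde\psi}$ finishes the proof, with \eqref{eq:forma-positiva} coming from the Morse-index step above rather than from a first-eigenvalue identity.

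The main obstacle, and the real difference from the bounded case, is precisely the replacement of the eigenvalue statement $\l_1(L_u,S_{k,e'}^+)\geq 0$ by the variational inequality \eqref{eq:forma-positiva}: on an unbounded sector there is no first eigenfunction to test against, so the nonnegativity of the inf must be extracted purely from $m_k(u)<\infty$ by constructing enough linearly independent $k$-invariant negative directions for $Q_{u,\Omega}$ and then invoking the cut-off/finite-energy machinery of \cite{GPW} to keep every competitor inside $C^1_{0,k}$. I expect the bookkeeping of how the reflection symmetry inside $S_{2k,e'}$ and the rotation invariance interact to produce exactly one extra admissible direction per sector — so that the count reaches $m_k(u)+1$ and yields the contradiction — to be the delicate point.
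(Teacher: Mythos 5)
Your skeleton (define $\tilde\psi$ as in \eqref{tilde-psi}, show $w_{\tilde\psi}\equiv 0$, conclude via Proposition \ref{prop-5.1}) matches the paper's, but the two steps where the unboundedness actually bites are handled incorrectly, and both are genuine gaps. The first is the continuation step. You propose to pick a compact core $K\subset S^+_{k,\tilde\psi}$ so that $S^+_{k,\tilde\psi+\e}\setminus K$ has small measure and then apply the weak maximum principle there, exactly as in Proposition \ref{prop-2.1}. This cannot work: the sector is unbounded, so the complement of any compact set has infinite measure, and the small-measure maximum principle is unavailable. The paper instead splits the sector. On $B_{R_1}\cap S^+_{k,\tilde\psi+\e}$ it runs a compactness argument: if $w_{\tilde\psi+\e_n}(x_n)<0$ with $\e_n\to 0$, then along a subsequence $x_n\to x_0\in\Gamma_{2,\tilde\psi}\cup\Gamma_{3,\tilde\psi}$, and one contradicts the Hopf-lemma sign conditions \eqref{eq:primo-passo} on $\partial w_{\tilde\psi}/\partial\theta$ (a small-measure argument is used only on a thin neighborhood $B_\delta$ of the origin, or of $\partial B$). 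On the unbounded part $S^+_{k,\tilde\psi+\e}\setminus B_{R_1}$ no maximum principle is invoked at all: one proves that $v=w^-_{\tilde\psi+\e}\chi_{S^+_{k,\tilde\psi+\e}}$ solves the linearized equation, by combining (i) the nonnegativity of the relevant quadratic form on $C^1_0(S^+_{k,\tilde\psi+\e}\setminus B_{R_0})$ -- which is exactly what $m_k(u)<\infty$ buys, since $Q_{u,\Omega\setminus B_{R_0}}\ge 0$ on $k$-invariant test functions; in the $f'$-convex case one additionally needs the odd-extension trick and \eqref{eq:confronto-V} -- with (ii) Cauchy--Schwarz for this semidefinite form and the cut-off estimates of Lemma~2.3 in \cite{GPW}, which is where $|\nabla u|\in L^2(\Omega)$ really enters; then $v\equiv 0$ follows from unique continuation, because $v$ already vanishes on the open set $B_{R_1}\cap S^+_{k,\tilde\psi+\e}$. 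Your proposal contains no substitute for this mechanism.

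The second gap is your derivation of \eqref{eq:forma-positiva}. You want to obtain it from Morse-index counting, ``constructing enough negative directions to reach $m_k(u)+1$''. But the hypothesis here is only $m_k(u)<\infty$: there is no numerical bound to exceed, so no construction of finitely many negative directions can yield a contradiction. Moreover, summing rotated copies of a function supported in one sector over the $k$ sectors produces a \emph{single} $k$-invariant function, i.e.\ at most one negative direction for $Q_{u,\Omega}$ restricted to $C^1_{0,k}$, not $k$ of them, so the bookkeeping you describe does not even get started. In the paper, \eqref{eq:forma-positiva} costs nothing once $w_{\tilde\psi}\equiv 0$ is established: for $\psi<\tilde\psi$ the function $w_\psi>0$ solves $L_{e_\psi}w_\psi=0$ in $S^+_{k,\psi}$ with zero boundary values, so by Lemma~2.1 of \cite{GPW} (a positive solution forces the associated quadratic form to be nonnegative on $C^1_0$) the form with potential $V_{e_\psi}$ is nonnegative; letting $\psi\to\tilde\psi$ and using that $w_{\tilde\psi}\equiv 0$ gives $V_{e_{\tilde\psi}}=f'(|x|,u)$, i.e.\ the form becomes $Q_{u,S^+_{k,\tilde\psi}}$. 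So the logical order is the reverse of yours: the symmetry $w_{\tilde\psi}\equiv 0$ is proved first and the nonnegativity of the form is a consequence, rather than the nonnegativity being extracted from the index and then feeding the symmetry argument.
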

\begin{proof}
We define $\tilde \psi$ as in \eqref{tilde-psi} and we want to prove that $w_{\tilde\psi}\equiv 0$ in $S_{k,\tilde \psi}^+$. Indeed \eqref{eq:forma-positiva} follows as in Proposition \ref{prop-2.1} using Lemma 2.1 in \cite{GPW}. The proof follows very closely the one in Proposition 2.8 of \cite{GPW}, adapted to the case of the sector $S_k$ as in  Proposition \ref{prop-2.1}.\\
Assume by contradiction that $w_{\tilde \psi}\neq 0$ in $S_{k,\tilde \psi}^+$. Then necessarily  $w_{\tilde \psi}> 0$ in $S_{k,\tilde \psi}^+$ by the strong maximum principle. Moreover applying the Hopf Lemma on the straight lines $\Gamma_{2,\tilde \psi}$ and $\Gamma_{3,\tilde\psi}$, where $w_{\tilde \psi}$ vanishes we have:
\begin{equation}\label{eq:primo-passo}
\frac{\partial w_{\tilde \psi}}{\partial \theta}(x)>0 \text{ on }\Gamma_{2,\tilde \psi}\setminus\{0\}\ \ ,\ \ \frac{\partial w_{\tilde \psi}}{\partial \theta}(x)<0 \text{ on }\Gamma_{3,\tilde \psi}\setminus\{0\}
\end{equation}
where $\theta$ denotes the angular variable in polar coordinates. Since by hypothesis $m_k(u)<\infty$, there exists $R_0>0$ such that 
$Q_{u,\Omega \setminus B_R} (\phi,\phi)\ge 0$ for every $\phi\in C^1_0(\Omega \setminus B_R) $ that satisfies \eqref{k-inv} and every $R>R_0$. Then also $Q_{u,S_{2k,\tilde \psi}\setminus B_R} (\phi,\phi)\ge 0$ for every $ \phi\in C^1_0(S_{2k,\tilde \psi} \setminus B_R) $ and every $R>R_0$
which implies that
$Q_{u,S_{k,\tilde \psi}^+\setminus B_R} (\phi,\phi)\ge 0$ for every $ \phi\in C^1_0(S_{k,\tilde \psi}^+ \setminus B_R) $ and every $R>R_0$.\\
We fix $R_1>R_0$ and we claim that there exists $\e_0>0$ such that 
\begin{equation}\label{eq:passo}
w_{\tilde \psi+\e}\geq 0 \ \ \text{ in }B_{R_1}\cap S_{k,\tilde\psi+\e}^+\ \ \forall \e\in [0,\e_0).
\end{equation}
In the case $\Omega=\R^2\setminus B$ let $B_\delta$ be a neighborhood of $\partial B$ in $\Omega$ of small measure, in case of $\Omega=\R^2$ instead let $B_\delta$ be a neighborhood of the origin of small measure. The measure of $B_\delta$ in both cases is so small to allow the strong maximum principle
to hold in $B_\delta\cap S_{k,\tilde \psi+\e}^+$ for the operator $L_{\tilde \psi+\e}$ for sufficiently small $\e>0$. We first show that
\begin{equation}\label{eq:secondo-passo}
w_{\tilde \psi+\e}(x)\geq  0 \ \text{ in }
B_{R_1}\cap \left(S_{k,\tilde\psi+\e}^+ \setminus B_\delta\right) \ \ \text{ for all } \e \in [0,\e_0)\end{equation}
If \eqref{eq:secondo-passo} is not true we have sequences $\e_n\to 0$,
$x_n \in (B_{R_1}\cap S_{k,\tilde{\psi}+\e_n}^+)\setminus B_{\delta}$ such that
$w_{\tilde{\psi}+\e_n}(x_n)
<0$. After passing to a subsequence,
$x_n\to x_0\in \overline{(B_{R_1}\cap 
  S_{k,\tilde{\psi}}^+)\setminus B_{\delta}}$ and
$w_{\tilde{\psi}}(x_0)=0$, hence  $x_0\in (\Gamma_{2,\tilde \psi}\cup \Gamma_{3,\tilde \psi})\cap (\overline{B_{R_1}\setminus B_\delta})$. Assume, without l.o.g. $x_0\in \Gamma_{2,\tilde \psi}$.
Since $w_{\tilde{\psi}+\e_n}(x)=0$ on
$\Gamma_{2,\tilde{\psi}+\e_n}$ and $w_{\tilde{\psi}+\e_n}(x_n)<0$, there should be points
$\xi_n$ on the line segment joining $x_n$ with $\Gamma_{2,\tilde\psi+\e_n}$ and
perpendicular to $ \Gamma_{2,\tilde\psi+\e_n}$,  such that
$\frac{\de w_{\tilde{\psi}+\e_n}}{\de \theta}(\xi_n)<0$.
Passing to the limit we get $\frac{\de w_{\tilde{\psi}}}
{\de \theta}(x_0)\le 0$ in contradiction with
(\ref{eq:primo-passo}). So we get (\ref{eq:secondo-passo}).\\
By the strong maximum principle, the definition of $B_{\delta}$ and
(\ref{eq:secondo-passo}) we get $w_{\tilde{\theta}+\e_n}\leq 0$ also in $
S_{k,\tilde{\psi}+\e}^+\cap  B_{\delta}$ and hence (\ref{eq:passo})
holds.\\
It lasts to prove that $w_{\tilde{\psi}+\e_n}\geq 0$ also in $S_{k,\tilde{\psi}+\e_n}^+\setminus B_{R_1}$ for all $\e\in [0,\e_0)$.
We recall that $w_{\tilde{\psi}+\e}$ satisfies
\begin{equation}\label{2.15}
\begin{array}{ll}
-\Delta w_{\tilde{\psi}+\epsilon}-V_{\tilde{\psi}+\epsilon}(x)w_{\tilde{\psi}+\epsilon}=0 & \hbox{ in }
S_{k,\tilde{\psi}+\e}^+.
\end{array}
\end{equation}
By (\ref{eq:passo}) the function
$v:=w_{\tilde{\psi} +\epsilon}^-\: \chi_{\stackrel{}{S_{k, \tilde{\psi}+\e}^+}}$ has its support strictly contained
in $S_{k,\tilde{\psi}+\e}^+\setminus B_{R_0}$. 
We claim that
\begin{equation}
  \label{eq:equiv0}
v \equiv 0.
\end{equation}
We first consider the case where $f$ is convex in the second variable,
and we let $\phi \in
C_0^\infty(S_{k,\tilde{\psi}+\e}^+\setminus B_{R_0})$. By previous considerations
$Q_{u,S_{k,\tilde{\psi}+\e}^+\setminus B_{R_0}} $ defines a (semidefinite) scalar product on
$C^1_0(S_{k,\tilde{\psi}+\e}^+\setminus B_{R_0}$), and the corresponding
Cauchy-Schwarz-inequality yields
$$
\left(
\int_{S_{k,\tilde{\psi}+\e}^+\setminus B_{R_0}}
\nabla \psi\nabla \rho -f'(|x|,u) \psi\rho\right)^2\le Q_{u,S_{k,\tilde{\psi}+\e}^+\setminus B_{R_0}}(\psi,\psi) Q_{u,S_{k,\tilde{\psi}+\e}\setminus B_{R_0}}(\rho,\rho)$$ 
for all $\psi,\rho \in C^1_0(S_{k,\tilde{\psi}+\e}^+\setminus B_{R_0})$.
Consequently, we obtain
\[
  \left(
\int_{S_{k,\tilde{\psi}+\e}^+\setminus B_{R_0}}
\nabla (v \xi_R)
\nabla \rho -f'(|x|,u) (v \xi_R)\rho\right)^2 \le Q_{u,S_{k,\tilde{\psi}+\e}^+\setminus B_{R_0}}(v \xi_R, v\xi_R)\: Q_{u,S_{k,\tilde{\psi}+\e}^+\setminus B_{R_0}} (\phi,\phi) 
\]
for $R>0$, where $\xi_R$ is a cut-off function supported in $B_{2R}$. 
By Lemma~2.3 (ii) in \cite{GPW}, we have
$$
\limsup_{R \to \infty} Q_{u, S_{k,\tilde{\psi}+\e}^+\setminus B_{R_0}}(v \xi_R, v \xi_R) \le 0
$$
so that
$$
\int_{S_{k,\tilde{\psi}+\e}^+\setminus B_{R_0}}\left(\nabla v \nabla \phi -f'(|x|,u) v
\phi \right)\:dx =
 \lim_{R \to \infty}
 \int_{S_{k,\tilde{\psi}+\e}^+\setminus B_{R_0}}
\nabla (v \xi_R)
\nabla \phi -f'(|x|,u) (v \xi_R)\phi
= 0.
$$
Since $\phi \in C_0^\infty(S_{k,\tilde{\psi}+\e}^+\setminus B_{R_0})$ was chosen
arbitrarily, we conclude that $v$ is a solution of
$$
-L_uv= 0 \qquad \text{in $S_{k,\tilde{\psi}+\e}^+\setminus B_{R_0}$.}
$$
Then however $v=w_{\tilde{\psi} +\epsilon}^- \chi_{S_{k,\tilde{\psi} +\epsilon} ^+}\equiv 0$ by the unique continuation
principle, since $w_{\tilde{\psi}+\epsilon}
^-(x) \equiv 0$ in $B_{R_1} \cap S
_{k,\tilde{\psi}+\e}^+$ by (\ref{eq:passo}) and $R_1>R_0$. Hence \eqref{eq:equiv0} holds.

\

Next we consider the case where $f'$ is convex in the second
variable. Since every function $\tau \in
C^1_0(S_{k,\tilde{\psi}+\e}^+\setminus B_{R_0})$ can be extended to an
odd function $\tilde{\tau} \in C^1_0(S_{2k,\tilde{\psi}+\e} \setminus B_{R_0})$ with
respect to the
reflection at $r_{\tilde{\psi} +\epsilon}$, we have 
$$
\int_{S_{k,\tilde{\psi}+\e}^+\setminus B_{R_0}}|\nabla \tau|^2-V_{\tilde \psi+\e}(x) \tau^2
\geq \int_{S_{k,\tilde{\psi}+\e}^+\setminus B_{R_0}}|\nabla \tau|^2-V_{es}(x) \tau^2
=\frac 12 Q_{u,S_{2k,\tilde{\psi}+\e}} (\tilde{\tau},\tilde{\tau})\geq 0 
$$
for all $\tau \in C^1_0(S_{k,\tilde{\psi}+\e}^+\setminus B_{R_0})$.
Hence $Q_{{\tilde\psi+\e}}(\tau,\tau):=\int_{S_{k,\tilde{\psi}+\e}^+\setminus B_{R_0}}|\nabla \tau|^2-V_{\tilde \psi+\e}(x) \tau^2$ defines a (semidefinite) scalar product on
$C^1_0(S_{k,\tilde{\psi}+\e}^+ \setminus B_{R_0})$, and the corresponding
Cauchy-Schwarz-inequality reads
\begin{equation}
  \label{eq:chauchy-schwarz-2}
\left(Q_{{\tilde{\psi}+\e}}(\psi,\rho)\right)^2 \le Q_{{\tilde{\psi}+\e}}(\psi,\psi)
Q_{{\tilde{\psi}t+\e}}(\rho,\rho),
\end{equation}
which by density holds for  all $\psi,\rho \in H^1_0(S_{k,e_{\tilde\psi+\e}}^+ \setminus B_{R_0})$ vanishing a.e. outside a bounded set. Now we let again $\phi \in
C_0^\infty(S_{k,\tilde{\psi}+\e}^+\setminus B_{R_0})$. By Lemma~2.3(i) in \cite{GPW}, we have
$$
\limsup_{R \to \infty} Q_{{\tilde\psi+\e}}(v \xi_R, v \xi_R) \le 0.
$$
Combining this with \eqref{eq:chauchy-schwarz-2}, we find  that
$$
\int_{S_{k,\tilde{\psi}+\e}^+\setminus B_{R_0}}\Bigl(\nabla v \nabla \phi -V_{{\tilde\psi+\e}}(x) v
\phi \Bigr)\:dx = \lim_{R \to \infty}Q_{{\tilde\psi+\e}}(v \xi_R,\phi)= 0
$$
Since $\phi \in C_0^\infty(\Sigma_{\tilde{\psi}+\e}\setminus B_{R_0})$ was chosen
arbitrarily, we conclude that $v$ is a solution of
$$
-\Delta v -V_{{\tilde\psi+\e}}(x) v= 0 \qquad \text{in $S_{k,\tilde{\psi}+\e}^+\setminus B_{R_0}$.}
$$
As above, this implies \eqref{eq:equiv0} by the unique continuation principle.\\
As a consequence of \eqref{eq:equiv0}, we have got $w_{\tilde\psi+\e}\ge 0$ in $S_{k,\tilde\psi+\e}^+$ contradicting the definition of $\tilde \psi$. Then the definition of $\tilde\psi$ implies that
  $w_{\tilde\psi}\equiv 0$.  

\end{proof}

\

In the case where $f$ is convex in the second variable then we have:
\begin{proposition}\label{prop-5.3}
Assume $u$ is a solution to \eqref{P} and possibly \eqref{eq:bc} that satisfies \eqref{k-inv} and such that 
$|\nabla u|\in L^2(\Omega)$ and $m_k(u)\leq 2$.
Suppose furthermore that  $f$ is convex in the second variable. Then, or $u$ is radial or, else, there exists a direction $e\in \mathcal S$ such that $u$ is symmetric with respect to $r_e$ in $S_{2k,e}$ and it is strictly monotone in the angular variable in the sectors $S_{k,e}^+$ and $S_{k,e}^-$.
\end{proposition}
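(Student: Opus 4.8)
The plan is to reproduce the scheme of Theorem~\ref{teo:1} in the unbounded setting, using Propositions~\ref{prop-5.1} and \ref{prop-5.2} in place of Propositions~\ref{prop-2.2} and \ref{prop-2.1}. Concretely, it suffices to produce a direction $e\in\mathcal S$ on one of whose half-sectors the quadratic form is nonnegative, i.e. with $\inf_{\phi\in C^1_0(S_{k,e}^+)}Q_{u,S_{k,e}^+}(\phi,\phi)\geq 0$ (or the same on $S_{k,e}^-$). Once this is granted, an unbounded version of Lemma~\ref{lemma-fine-dim} shows that on that half-sector $w_e$ is either identically zero or of one sign: in the first case $\inf Q\geq 0$ there and Proposition~\ref{prop-5.1} gives that $u$ is radial or strictly monotone, while in the second case Proposition~\ref{prop-5.2} first upgrades the sign condition to a direction $e'$ with $w_{e'}\equiv 0$ and nonnegative form, after which Proposition~\ref{prop-5.1} again concludes. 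Since the symmetry of $u$ with respect to $r_e$ in $S_{2k,e}$ is exactly the statement $w_e\equiv 0$, the full conclusion follows.

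First I would establish the unbounded analogue of Lemma~\ref{lemma-fine-dim}. Since $f$ is convex, $w_e$ satisfies $L_uw_e\geq 0$ in $S_{k,e}^+$ as in \eqref{eq:ineq-we}. Assuming $\inf_{\phi\in C^1_0(S_{k,e}^+)}Q_{u,S_{k,e}^+}(\phi,\phi)\geq 0$ and that $w_e$ changes sign, I would test this inequality against $w_e^-$ using the radial cut-off $\xi_R$ of Proposition~\ref{prop-5.2}, and let $R\to\infty$ invoking Lemma~2.3 of \cite{GPW} to control the cut-off error. This yields $Q_{u,S_{k,e}^+}(w_e^-,w_e^-)\leq 0$, hence $Q_{u,S_{k,e}^+}(w_e^-,w_e^-)=0$ by the assumed nonnegativity, so that $w_e^-$ realizes the infimum, solves $L_uw_e^-=0$, and the strong maximum principle forces $w_e^-\equiv 0$ or $w_e^-<0$ throughout; either way $w_e$ cannot change sign, a contradiction. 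Thus on any half-sector where the form is nonnegative, $w_e$ has a sign or vanishes.

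The core step is the unbounded analogue of Proposition~\ref{prop-3.2}, which I would obtain by truncation. Because $m_k(u)<\infty$, fix $R_0$ with $Q_{u,\Omega\setminus B_{R_0}}(\phi,\phi)\geq 0$ for all $k$-invariant $\phi$, and extract $k$-invariant, $L^2$-orthogonal functions $\varphi_1,\varphi_2$ spanning the $k$-invariant negative subspace of $Q_{u,\Omega}$ (which exist in $L^2$ by the confinement of the negative part to $B_{R_0}$, as in \cite{GPW}). For $R>R_0$ and each $e$, let $\varphi_{e,R}^\pm$ be the first Dirichlet eigenfunctions of $L_u$ on the \emph{bounded} sectors $S_{k,e}^\pm\cap B_R$, and run the construction of Proposition~\ref{prop-3.2}: form $\xi_{e,R}$ orthogonal to $\varphi_1$, then use the continuity of $h_R(\psi)=\int_\Omega \tilde\xi_{\psi,R}\varphi_2$ together with $h_R(\tfrac{\pi}{k})=-h_R(0)$ to find $e'_R$ with $\tilde\xi_{e'_R,R}$ orthogonal to both $\varphi_1$ and $\varphi_2$. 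As $\tilde\xi_{e'_R,R}$ is $k$-invariant and compactly supported, the bound $m_k(u)\leq 2$ gives $Q_{u,\Omega}(\tilde\xi_{e'_R,R},\tilde\xi_{e'_R,R})\geq 0$, whence $A_{e'_R}^2\,\l_1(L_u,S_{k,e'_R}^+\cap B_R)+B_{e'_R}^2\,\l_1(L_u,S_{k,e'_R}^-\cap B_R)\geq 0$ and at least one truncated first eigenvalue is nonnegative. Passing to the limit along a subsequence with $e'_R\to e'$, domain monotonicity forces $\l_1(L_u,S_{k,e'}^\pm\cap B_R)$ to decrease to the generalized principal eigenvalue of $S_{k,e'}^\pm$, whose nonnegativity is equivalent to $\inf_{\phi\in C^1_0}Q_{u,S_{k,e'}^\pm}(\phi,\phi)\geq 0$; hence the limiting direction $e'$ has nonnegative form on one half-sector, as required.

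The main obstacle is precisely this passage to the limit $R\to\infty$: one must show that the truncated eigenvalues $\l_1(L_u,S_{k,e}^\pm\cap B_R)$ stay uniformly bounded below (which relies on $Q_{u,\Omega\setminus B_{R_0}}\geq 0$, preventing the minimizers from escaping to infinity) and converge monotonically to the generalized principal eigenvalue, and that the selected directions $e'_R$ converge while the intermediate-value structure of $h_R$ remains stable under the limit. The remaining ingredients---the extraction of $\varphi_1,\varphi_2$, the unbounded form of Lemma~\ref{lemma-fine-dim}, and the final invocation of Propositions~\ref{prop-5.1} and \ref{prop-5.2}---are routine adaptations of the arguments already developed for the bounded case and in \cite{GPW}.
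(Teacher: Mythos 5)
Your overall architecture coincides with the paper's: establish that some direction $e$ satisfies $\inf_{\phi\in C^1_0(S_{k,e}^+)}Q_{u,S_{k,e}^+}(\phi,\phi)\geq 0$ (or the same on $S_{k,e}^-$), then use convexity of $f$ plus cut-offs (Lemma 2.3 of \cite{GPW}) to show that for such a direction either $w_e\equiv 0$ or $w_e$ has a sign, and conclude through Propositions \ref{prop-5.2} and \ref{prop-5.1}. Where you differ is the core step. The paper argues by contradiction: if the claim failed for every direction, Lemma 2.9 of \cite{GPW} would give a \emph{uniform} radius $\bar R$ with $\lambda_1(L_u,S_{k,e}^\pm\cap B_R)<0$ for all $e$ and all $R>\bar R$; it then fixes a single large $R$, takes $\varphi_1,\varphi_2$ to be the first two ($k$-invariant, negative) Dirichlet eigenfunctions of $L_u$ \emph{on the truncated domain} $\Omega\cap B_R$, and runs the Proposition \ref{prop-3.2} argument once in $\Omega\cap B_R$, contradicting the uniform negativity. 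You instead propose to run the truncated argument at every radius $R$ and pass to the limit $R\to\infty$, which is a viable alternative organization of the same ingredients.

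However, as written your core step has a genuine gap: you fix $\varphi_1,\varphi_2$ once and for all as functions ``spanning the $k$-invariant negative subspace of $Q_{u,\Omega}$'' and then infer, from $m_k(u)\leq 2$ and $L^2$-orthogonality to $\varphi_1,\varphi_2$, that $Q_{u,\Omega}(\tilde\xi_{e'_R,R},\tilde\xi_{e'_R,R})\geq 0$. That implication is not valid for an arbitrary maximal negative subspace: if $\psi_1,\psi_2$ merely span a two-dimensional negative subspace, a function $L^2$-orthogonal to them can still make the form negative (already on a bounded domain, perturb the second eigenfunction by a third eigenfunction whose eigenvalue is $0$). The inference requires $\varphi_1,\varphi_2$ to be genuine eigenfunctions spanning the negative spectral subspace of a self-adjoint realization of $L_u$ on $\Omega$, and on an unbounded domain the existence of such $L^2$ bound states is not free --- no decay of $f'(|x|,u)$ is assumed, and avoiding exactly this spectral issue is why the paper (following \cite{GPW}) works with truncated eigenfunctions. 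The repair stays inside your scheme: take $\varphi_1^R,\varphi_2^R$ to be the eigenfunctions of $L_u$ on $\Omega\cap B_R$ in the $k$-invariant class (for $R\geq R_0$ there are exactly two negative eigenvalues and the third is nonnegative, by $m_k(u)\leq 2$), so that the spectral theorem on the bounded domain justifies the key inequality at each $R$. The limit must then be concluded by a test-function argument rather than by the ``domain monotonicity'' you invoke: after a pigeonhole on the sign $\pm$ and extraction $e'_R\to e'$, any fixed $\phi\in C^1_0(S_{k,e'}^+)$ has compact support contained, for $R$ large, in $S_{k,e'_R}^+\cap B_R$, whence $Q_{u}(\phi,\phi)\geq \lambda_1(L_u,S_{k,e'_R}^+\cap B_R)\|\phi\|_{L^2}^2\geq 0$. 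Your monotonicity statement, as phrased, concerns the fixed limiting direction $e'$, while the nonnegativity you actually possess is at the moving directions $e'_R$; the argument above is what transfers it.
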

\begin{proof}
We want to prove that there exists a direction $e\in \mathcal S$ such that
\begin{equation}\label{eq:forma-positiva-passo}
\inf_{\psi\in C^1_0(S_{k,e}^+)}Q_{u,S_{k,e}^+}(\psi,\psi)\geq 0 \ \ \text{ or }\inf_{\psi\in C^1_0(S_{k,e}^-)}Q_{u,S_{k,e}^-}(\psi,\psi)\geq 0.
\end{equation}
With respect to this direction then, either  $w_e\equiv 0$, and then the monotonicity of $u$ follows by Proposition \ref{prop-5.1},  or else it can be proved, as in the proof of Theorem 1.4 in \cite{GPW} and as in Proposition \ref{prop-5.2}, using the function $v_R=w_e^+\chi_{S_{k,e}^+}\xi_R$ for a cut off $\xi_R$, that $w_e$ has one sign in $S_{k,e}^+$ so that the monotonicity follows by Propositions \ref{prop-5.2} and \ref{prop-5.1}.\\
Suppose that \eqref{eq:forma-positiva-passo} does not hold. Then, by Lemma 2.9 of \cite{GPW} there exists $\bar R>0$ such that for any $e\in \mathcal S$ and for every $R>\bar R$ either $\l_1(L_u, S_{k,e}^+\cap B_R)<0$ or $\l_1(L_u, S_{k,e}^-\cap B_R)<0$.
By definition of $m_k(u)$, $2$ is the maximal dimension of a subspace $X=\mathit{span}\{\psi_1,\psi_2\}$ of
$C^1_{0,k}$ such that $Q_{u,\Omega}(\psi,\psi)<0$  for every $\psi\in X\setminus\{0\}$. \\
We take a ball $B_R$ with radius $R$ sufficiently large to contain the supports of $\psi_1$ and $\psi_2$ and also $B_{\bar R}$. We deduce that in $B_R\cap \Omega$ the operator $L_u$ admits precisely $2$ negative eigenvalues in $H^1_{0,k}$ and the third is nonnengative. Let us denote by $\varphi_1$ and $\varphi_2$ two eigenfunctions of $L_u$, orthogonal in $L^2(\Omega)$, corresponding to the negative eigenvalues in $\Omega\cap B_R$.\\
We then follow exactly the proof of Proposition \ref{prop-3.2} in $\Omega\cap B_R$ and we obtain that there exists a direction $e\in \mathcal S$ for which at least one among $\l_1(L_u, S_{k,e}^+\cap B_R)$ and $\l_1(L_u, S_{k,e}^-\cap B_R)$ is nonnegative obtaining a contradiction. This proves \eqref{eq:forma-positiva-passo} and concludes the proof.
%{\taglia By definition $2$ is the maximal dimension of a subspace of $X=\mathit{span}\{\psi_1,\psi_2\}$ of $C^1_0(\Omega)$ such that $\psi$ satisfies \eqref{k-inv} and $Q_{u,\Omega}(\psi,\psi)<0$  for every $\psi\in X\setminus\{0\}$. Taking a ball $B_R$ with radius sufficiently large to contain the supports of $\psi_1$ and $\psi_2$ we deduce that in $B_R\cap \Omega$ the operator $L_u$ admits precisely $2$ negative eigenvalues in $H^1_{0,k}$ and the third is nonnengative. Let us denote by $\varphi_1$ and $\varphi_2$ two orthogonal eigenfunction of $L_u$ corresponding to the negative eigenvalues in $\Omega\cap B_R$.\\We then follows exactly the proof of Proposition \ref{prop-3.2} in $\Omega\cap B_R$ and we obtain that there exists a direction $e\in \mathcal S$ for which at least one among $\l_1(L_u, S_{k,e}^+\cap B_R)$ and $\l_1(L_u, S_{k,e}^-\cap B_R)$ is nonnegative. This proves that \[\inf_{\psi\in C^1_0(S_{k,e}^+)}Q_{u,S_{k,e}^+}(\psi,\psi)\geq 0 \ \ \text{ or }\inf_{\psi\in C^1_0(S_{k,e}^-)}Q_{u,S_{k,e}^-}(\psi,\psi)\geq 0 \]}
\end{proof}

In the case when $f'$ is convex instead we have:
\begin{proposition}\label{prop-5.4}
Assume $u$ is a solution to \eqref{P} and possibly \eqref{eq:bc} that satisfies \eqref{k-inv} and such that 
$|\nabla u|\in L^2(\Omega)$ and $m_k(u)\leq 2$.
Suppose furthermore that  $f'$ is convex in the second variable. Then, or $u$ is radial or, else, there exists a direction $e\in \mathcal S$ such that $u$ is symmetric with respect to $r_e$ in $S_{2k,e}$ and it is strictly monotone in the angular variable in the sectors $S_{k,e}^+$ and $S_{k,e}^-$.
\end{proposition}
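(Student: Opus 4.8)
The plan is to follow the scheme of Proposition \ref{prop-5.3} step by step, replacing the bounded-domain argument based on Proposition \ref{prop-3.2} (which exploits the convexity of $f$) by the one based on Proposition \ref{prop-4.1} (which exploits the convexity of $f'$). As in that proof, everything will reduce to exhibiting a direction $e\in\mathcal S$ for which
\begin{equation}\tag{\ref{eq:forma-positiva-passo}}
\inf_{\psi\in C^1_0(S_{k,e}^+)}Q_{u,S_{k,e}^+}(\psi,\psi)\geq 0 \quad\text{ or }\quad \inf_{\psi\in C^1_0(S_{k,e}^-)}Q_{u,S_{k,e}^-}(\psi,\psi)\geq 0
\end{equation}
holds. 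Once such a direction is available I would argue exactly as in Proposition \ref{prop-5.3}: if $w_e\equiv 0$ in $S_{k,e}^+$, the symmetry and the strict angular monotonicity of $u$ follow from Proposition \ref{prop-5.1}; if instead $w_e\not\equiv 0$, I would show, testing the differential inequality satisfied by $w_e$ against the truncated function $v_R=w_e^+\chi_{S_{k,e}^+}\xi_R$ (with $\xi_R$ a radial cut-off) and letting $R\to\infty$ as in Proposition \ref{prop-5.2} and in Theorem~1.4 of \cite{GPW}, that $w_e$ has one sign in $S_{k,e}^+$, after which Propositions \ref{prop-5.2} and \ref{prop-5.1} give the conclusion.

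The core of the proof is therefore \eqref{eq:forma-positiva-passo}, which I would establish by contradiction. Assuming it fails for every $e\in\mathcal S$, the finiteness of $m_k(u)$ together with Lemma~2.9 of \cite{GPW} yields a radius $\bar R>0$ such that, for every $e\in\mathcal S$ and every $R>\bar R$, at least one of $\l_1(L_u,S_{k,e}^+\cap B_R)$ and $\l_1(L_u,S_{k,e}^-\cap B_R)$ is strictly negative. On the other hand, since $m_k(u)\leq 2$, there is a maximal subspace $X=\mathit{span}\{\psi_1,\psi_2\}$ of $C^1_{0,k}$ on which $Q_{u,\Omega}$ is negative definite; choosing $R$ so large that $B_R$ contains $B_{\bar R}$ and the supports of $\psi_1,\psi_2$, the operator $L_u$ has exactly two negative eigenvalues in $H^1_{0,k}(\Omega\cap B_R)$, with orthogonal $k$-invariant eigenfunctions $\varphi_1,\varphi_2$. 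Now $\Omega\cap B_R$ is a bounded radial domain on which $f'$ is still convex and $L_u$ has $k$-Morse index $2$, so I would run the whole argument of Proposition \ref{prop-4.1} verbatim there, with each sector $S_{k,e}^\pm$ replaced by $S_{k,e}^\pm\cap B_R$. This produces a direction $e$ for which one of $\l_1(L_u,S_{k,e}^\pm\cap B_R)$ is nonnegative, contradicting the conclusion drawn from Lemma~2.9 and hence establishing \eqref{eq:forma-positiva-passo}.

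The hard part will be the faithful transplantation of Proposition \ref{prop-4.1} to the truncated domain $\Omega\cap B_R$, and in particular the reproduction of its delicate Case~2, namely the sub-analysis on sectors of amplitude $\frac{\pi}{2k}$. One must check that the dichotomy $S_*=\emptyset$ versus $S_*\neq\emptyset$, the construction of $\tilde\xi_e$ and of the continuous angle map $h(\psi)$ with its antisymmetry $h(\frac\pi k)=-h(0)$, and the final step invoking the principle of symmetric criticality of \cite{Palais} together with the unique continuation principle, all remain valid when the sectors acquire an extra boundary arc on $\partial B_R$. Since the truncated sectors are bounded Lipschitz domains and all competitor functions carry homogeneous Dirichlet data on this new arc, I expect the Hopf-type and unique-continuation arguments to be unaffected and the reduction to go through; nonetheless this bookkeeping is precisely where the proof has to be verified, exactly as the analogous passage in Proposition \ref{prop-5.3} relies on Proposition \ref{prop-3.2} surviving truncation.
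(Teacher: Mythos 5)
Your reduction breaks down exactly at the step you flag as ``the hard part'': Proposition \ref{prop-4.1} cannot be run verbatim on $\Omega\cap B_R$. The reason the analogous truncation works in Proposition \ref{prop-5.3} is that the relevant part of Proposition \ref{prop-3.2} uses only the Dirichlet eigenfunctions $\varphi_e^\pm$ of $L_u$ on the sectors, objects which are intrinsically defined on the truncated sectors $S_{k,e}^\pm\cap B_R$; the functions $w_e$ never enter that construction (they appear only through Lemma \ref{lemma-fine-dim}, which is not invoked in the truncated domain). The proof of Proposition \ref{prop-4.1}, by contrast, is built entirely on the functions $w_e^1,w_e^2$ of \eqref{eq:w1e2}, and these do \emph{not} survive truncation: $u$ restricted to $\Omega\cap B_R$ is not a Dirichlet solution there, so $w_e$ has no reason to vanish on the new arc $\partial B_R\cap S_{2k,e}$. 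Consequently $w_e^\pm\chi_{S_{k,e}^\pm\cap B_R}$ are not admissible $H^1_0$ test functions on the truncated sectors, and the starting point of Proposition \ref{prop-4.1} --- the inequality $Q_{u,S_{2k,e}\cap B_R}(w_e^i,w_e^i)\le 0$, obtained by multiplying $L_ew_e=0$ by functions vanishing on the \emph{whole} boundary and then using \eqref{eq:confronto-V} --- is simply unavailable. Your assertion that ``all competitor functions carry homogeneous Dirichlet data on this new arc'' is false precisely for the $w_e^i$. Note also that the conclusion of Proposition \ref{prop-4.1} is the dichotomy on $w_e$, not the nonnegativity of a sector eigenvalue, so even its output would not produce the contradiction you want against Lemma 2.9 of \cite{GPW}.

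There is a second, independent gap: even granting \eqref{eq:forma-positiva-passo}, your passage from $w_e\not\equiv0$ to ``$w_e$ has one sign'' tests ``the differential inequality satisfied by $w_e$'' against $v_R=w_e^+\chi_{S_{k,e}^+}\xi_R$. That inequality, $L_u w_e\ge 0$ in $S_{k,e}^+$ (i.e.\ \eqref{eq:ineq-we}), is a consequence of the convexity of $f$ and is what powers Theorem 1.4 of \cite{GPW} and Proposition \ref{prop-5.3}; under convexity of $f'$ alone it does not exist. All one has is $V_e\le V_{es}$, i.e.\ \eqref{eq:confronto-V}, which gives information only when tested against functions symmetric with respect to $r_e$ and supported in the whole double sector $S_{2k,e}$, not against $w_e^+\chi_{S_{k,e}^+}\xi_R$. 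This is exactly why the paper's proof of Proposition \ref{prop-5.4} never establishes \eqref{eq:forma-positiva-passo} as an intermediate step: it argues directly in the unbounded domain, splitting according to $S_*=\emptyset$ or $S_*\neq\emptyset$ as in \eqref{eq:defS}, keeping $w_e^1,w_e^2$ on the full (unbounded) sectors, replacing the $L^2$-based coefficients of $\xi_e$ by the quadratic-form coefficients \eqref{eq:xi-secondo}, truncating only the eigenfunctions $\varphi_1,\varphi_2$ (which genuinely are compactly supported Dirichlet eigenfunctions of $\Omega\cap B_R$), handling infinity with cut-offs, Lemma 2.3 of \cite{GPW} and a Cauchy--Schwarz inequality for the semidefinite form on the space $\mathcal H$, and, in Case 2, redoing the $\frac{\pi}{2k}$ sub-sector analysis with $\tilde g_\psi$ in place of $\varphi_2$.
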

\begin{proof}
First we let $S_*$ as in \eqref{eq:defS}.  
%\[S_*:=\{e\in\mathcal S : w_e\equiv 0 \hbox{ in }S_{k,e}^+ \hbox{ and  } \inf _{\psi\in C^1_0(S_{k,e}^+)} Q_{u, S_{k,e}^+}(\psi,\psi)<0\}.\]
Next, as in the proof of the previous Proposition, from $m_k(u)\leq 2$, we deduce that there exists $R_0>0$ such that $L_u$ admits exactly two negative eigenvalues in $\Omega \cap B_R$, for every $R\geq R_0$, with eigenfunctions $k$-invariant while the third eigenvalue is nonnengative. Let $\varphi_1$ and $\varphi_2$ two eigenfunctions $k$-invariant of $L_u$, orthogonal in $L^2(\Omega)$ 
corresponding to the negative eigenvalues in $\Omega\cap B_{R}$, for some $R\geq R_0$.
Hence $\varphi_1,\varphi_2\in H^1_{0,k}(B_{R})$ and, extending these functions by zero in $\Omega \setminus B_{R}$
\[Q_{u,\Omega}(\varphi_i,\varphi_i)<0 \ \text{ and }\ Q_{u,\Omega}(\varphi_i,\varphi_j)=0\]
where the second equality follows by the orthogonality of $\varphi_1$ and $\varphi_2$ in $L^2(B_{R})$ and the fact that
$\varphi_i\in H^1_{0,k}(B_{R})$ solves $-\Delta \varphi_i-f'(|x|,u)\varphi_i=\l_i\varphi_i$  in $\Omega\cap B_R$.\\
\noindent {\bf Case 1:} $S_*=\emptyset$\\
Assume by contradiction that for every $e\in\mathcal S$ the function $w_e$ changes sign in $S_{k,e}^+$. 
We define the functions $w_e^1$ and $w_e^2$ as in \eqref{eq:w1e2}. They belong to $H^1_{0,k,loc}(\Omega)$.
Next we need to change a little the proof of Proposition \ref{prop-4.1} in order to deal with the unboundedness  of $\Omega$ following the proof of Theorem 1.3 in  \cite{GPW}. For every $e\in \mathcal S$ we define the
function
 \begin{equation}\label{eq:xi-secondo}
 \xi_e:=Q_{u,S_{2k,e}}( w_e^2,\varphi_1  )w_e^1-Q_{u,S_{2k,e}}( w_e^1,\varphi_1  )w_e^2=A_ew_e^1-B_ew_e^2\end{equation}
 which is supported in $S_{2k,e}$, and denote by $\tilde \xi_e$ its extension in all of $\Omega$ by the invariance by rotations of angle $\frac{2\pi}k$.
 Then, by construction
\[\begin{split}
Q_{u,\Omega}(\tilde\xi_e,\varphi_1)&=kQ_{u,S_{2k,e}}(\xi_e,\varphi_1)=k
\left[A_eQ_{u,S_{2k,e}}(w_e^1,\varphi_1)-B_eQ_{u,S_{2k,e}}(w_e^2,\varphi_1)\right]=0
\end{split}
\]
Observe that $Q_{u,\Omega}(\tilde\xi_e,\varphi_1)$ is well defined since $\varphi_1$ is supported in $B_{R_0}$ and $\xi_e$ and $\tilde \xi_e$ belong to $H^1_{0,loc}(S_{2k,e})$ and to $H^1_{0,k,loc}(\Omega)$ respectively. \\

Since $\varphi_2$ is supported in $B_{R}$ we can define the function $h(\psi)$ as in \eqref{eq:h-psi}. 
We then obtain, 
as in Proposition \ref{prop-4.1}, that there exists a direction $e'\in \mathcal S$ such that $h(\psi_{e'})=0$ showing that 
$\tilde \xi_{e'}$ is orthogonal to $\varphi_2$ in $L^2(\Omega)$, since $h(\psi_{e'})=\int_{\Omega}\tilde \xi_{e'}\varphi_2 =k\int_{S_{2k,e'}} \xi_{e'}\varphi_2 =0$. \\

From the definition of $k$-Morse index of $u$ it is easy to
deduce that $Q_{u,\Omega}(\rho,\rho) \ge 0$
for all $\rho \in C^1_0(\Omega)$ that satisfies \eqref{k-inv} which is orthogonal in $L^2(\Omega)$ to $\varphi_1$ and $\varphi_2$. By density this holds also for functions in $H^1_{0,loc}(\Omega)$ vanishing outside a compact set, and so we denote by $\mathcal H $ the subspace of functions in $H^1_{0,loc}(\Omega)$ vanishing outside a compact set that satisfies \eqref{k-inv} which are orthogonal in $L^2(\Omega)$ to $\varphi_1$ and $\varphi_2$.
Hence $Q_{u,\Omega}$ defines a semidefinite scalar product in
$\mathcal H$ with corresponding Cauchy Schwarz inequality
$$
Q_{u,\Omega}(\tau,\rho)^2 \le Q_{u,\Omega}(\tau,\tau) Q_{u,\Omega}(\rho,\rho) \qquad \text{for
  $\tau,\rho \in \mathcal H$.}
$$
We now fix $\phi \in C_0^\infty(\Omega)$ and put
$$
\phi_1= \phi - \sum_{i=1}^2 \frac{Q_{u,\Omega}(\phi,\varphi_i)}{Q_{u,\Omega}(\varphi_i,\varphi_i)}\varphi_i.
$$
Then $\phi_1 \in \mathcal H$ and, letting $\xi_R$ be a cut-off function supported in $B_R$, we have 
\begin{equation}
Q_{u,\Omega}(\tilde \xi_{e'} \xi_R,\phi)^2=Q_{u,\Omega}(\tilde \xi_{e'} \xi_R,\phi_1)^2 \le
Q_{u,\Omega}(\tilde \xi_{e'} \xi_R,\tilde\xi_{e'} \xi_R)\: Q_{u}(\phi_1,\phi_1) \quad \text{for $R>R_0$.}     \label{eq:cs-particular}
\end{equation}
Now, by Lemma 2.3 (i) of \cite{GPW}  we have that 
\begin{equation}
\label{eq:limsup}
\limsup_{R\to \infty} Q_{e',\Omega}(w_{e'}^j\xi_R,w_{e'}^j\xi_R)\leq 0
\end{equation} 
for $j=1,2$ and, since $w_{e'}^j(\sigma_{e'}(x))=w_{e'}^j(x)$ then by \eqref{eq:confronto-V}
\[Q_{u,S_{2k,e'}}(w_{e'}^j\xi_R, w_{e'}^j\xi_R)=Q_{e's,S_{2k,e'}}( w_{e'}^j\xi_R,w_{e'}^j\xi_R)
\le Q_{e',S_{2k,e'}}( w_{e'}^j\xi_R, w_{e'}^j\xi_R)\]
for $j=1,2$, and also
\[\begin{split}
&Q_{u,\Omega}(\tilde\xi_{e'}\xi_R,\tilde \xi_{e'}\xi_R)=kQ_{u,S_{2k,e'}}(\xi_{e'}\xi_R, \xi_{e'}\xi_R)\\
&=k
A_{e'}^2Q_{u,S_{2k,e'}}(w_{e'}^1\xi_R, w_{e'}^1\xi_R)+kB_{e'}^2Q_{u,S_{2k,e'}}(w_{e'}^2\xi_R, w_{e'}^2\xi_R)\end{split}\]
Combining this with \eqref{eq:limsup} we obtain
\[\limsup_{R\to \infty} Q_{u,\Omega}(\tilde \xi_{e'}\xi_R,\tilde \xi_{e'}\xi_R)\leq 0\]
which, together with \eqref{eq:cs-particular} gives 
\[\int_{\Omega}\nabla \tilde \xi_{e'}\nabla \phi-f'(|x|,u)\tilde \xi_{e'}\phi =\lim_{R\to \infty}Q_{u,\Omega}(\tilde \xi_{e'}\xi_R,\phi)=0 \]
Since $\phi\in C^{\infty}_0(\Omega)$ is arbitrary, then $\tilde \xi_{e'}$ solves $L_u\tilde \xi_{e'}=0$ and also $L_u\xi_{e'}=0$ in $S_{2k,e'}$. Then, defining the function $\widehat \xi_{e'}$ as in \eqref{hat-xi} we obtain 
as in the proof of Proposition \ref{prop-4.1} that $\xi_{e'}\equiv 0$, which is possibile, by \eqref{eq:xi-secondo}, since $w_{e'}^1$ and $w_{e'}^2$ have disjoint supports only when $A_{e'}=B_{e'}=0$, namely
\[0=Q_{u,S_{2k,e'}}(w_{e'}^j,\varphi_1) \ \ \text{ for }j=1,2.\]
The $k$-invariance of $\varphi_1$ and $\tilde w_{e'}$ then also implies that 
\[Q_{u,\Omega}(\tilde w_{e'}^j,\varphi_1) =k Q_{u,S_{2k,e'}}(w_{e'}^j,\varphi_1)=0\ \ \text{ for }j=1,2.\]
But this is not possible by the same argument in the end of the proof of Theorem 1.3, Case 1, in \cite{GPW}
and the contradiction concludes the proof of Case 1.\\
\noindent {\bf Case 2:} $S_*\neq \emptyset$\\
Let us assume that $e=e_\psi\in S_*$, so that $w_\psi\equiv 0$ in $S_{k,\psi}^+$. 
Since, by definition
\[ \inf _{\phi\in C^1_0(S_{k,e}^+)} Q_{u, S_{k,e}^+}(\phi,\phi)<0
\]
then, there exists $R_1>0$ such that 
\[\l_1(L_u,S_{k,\psi}^+\cap B_R)<0\
\ \text {for every }R>R_1.\]
For an $R>\max{R_0,R_1}$ we define, as in proof of Proposition \ref{prop-4.1}, the functions $g_\psi$, $ \widehat g_\psi$, $\tilde g_\psi$. By symmetry reasons $\tilde g_\psi$ is orthogonal to $\varphi_1$ in $L^2(\Omega\cap B_R)$ and satisfies
\[Q_{u,\Omega\cap B_R}(\tilde g_\psi,\tilde g_\psi)=2k\l_1(L_u,S_{k,\psi}^+\cap B_R)<0.\]
To conclude we want to prove that either $w_{\psi+\frac{\pi}{2k}}$ has a sign in $S_{k,\psi+\frac{\pi}{2k}}^+$ or $w_{\psi+\frac{\pi}{2k}}\equiv 0$ and
\begin{equation}\label{passo-verso-fine}
\inf _{\phi\in C^1_0(S_{k,\psi+\frac{\pi}{2k}}^+)} Q_{u, S_{k,\psi+\frac{\pi}{2k}}^+}(\phi,\phi)\geq 0
\end{equation}
since the thesis then follows by Propositions \ref{prop-5.1} and \ref{prop-5.2}. 
i) First we show that whenever $w_{\psi+\frac \pi{2k}}\equiv 0$ in $S_{k,\psi+\frac \pi{2k}}^+$ then \eqref{passo-verso-fine} holds. Suppose not, then for sufficiently large $R>\max{R_0,R_1}$ we have
\[\l_1(L_u,S_{k,\psi+\frac{\pi}{2k}}^+\cap B_R)<0\]
By the same arguments of the proof of Proposition \ref{prop-4.1} $i)$ the function $\tilde g_{\psi+\frac {\pi}{2k}}$ is orthogonal in $L^2(\Omega)$ to $\varphi_1$ and $\tilde g_\psi$, and satisfies
\[Q_{u,\Omega}(\tilde g_{\psi+\frac {\pi}{2k}},\tilde g_{\psi+\frac {\pi}{2k}})=2k\l_1(L_u,S_{k,\psi+\frac{\pi}{2k}}^+\cap B_R)<0\]
contradicting the bound $m_k(u)\leq 2$. This shows that, whenever $w_{\psi+\frac{\pi}{2k}}\equiv 0$ then \eqref{passo-verso-fine} should hold.\\
ii) It last to prove that, whenever $w_{\psi+\frac{\pi}{2k}}\neq 0$ then it has a sign in  $S_{k,\psi+\frac{\pi}{2k}}^+$.  Assume by contradiction it changes sign and define the functions $w_{e}^1$ and $w_{e}^2$ as in \eqref{eq:w1e2} relative to the direction $e=e_{\psi+\frac{\pi}{2k}}$. Then, as in the proof of Proposition \ref{prop-4.1} $ii)$ the function $\tilde \xi_e$ is orthogonal in $L^2(\Omega)$ to $\varphi_1$ and to $\tilde g_{\psi}$.
Then the proof follows as in the end of Case 1, where $\varphi_2$ is substituted by $\tilde g_\psi$ 
getting a contradiction and showing that, in this case, $_{\psi+\frac{\pi}{2k}}$ has a sign in $S_{k,\psi+\frac{\pi}{2k}}^+$.  
\end{proof}

\section{Applications}\label{se:6}
Let us consider the Lane-Emden problem
\begin{equation}\label{LE}
\left\{\begin{array}{ll}
-\Delta u = |u|^{p-1}u \qquad & \text{ in } \Omega, \\
u= 0 & \text{ on } \partial \Omega,
\end{array} \right.
\end{equation}
where $p>1$ is a real parameter. In the paper \cite{GGPS} the unique positive radial solution to \eqref{LE} is studied when $\Omega$ is an annulus. In particular problem \eqref{LE} admits a unique positive radial solution $u_p$ for every $p\in(1,\infty)$ and its Morse index converges to $+\infty$ as $p\to \infty$. This behavior produces infinitely many nonradial positive solutions that arise by bifurcation from the solution $u_p$. Moreover in \cite{G2} it has been proved that, for every
$k\in \N$, $k\geq 1$ there exists an exponent $p_k>1$ at which un unbounded continuum of $k$-invariant solutions to \eqref{LE} bifurcates. This continuum exists for every $p>p_k$ and all the solutions in it are symmetric with respect to a direction $e\in \mathcal S$ and are monotone in the angular variable in the sectors $S_{k,e}^\pm$, by construction. Namely they present the same monotonicity properties that we proved under the $k$-Morse index bound.\\
It is also proved in \cite{G2}, by Morse index considerations, that, letting 
\begin{equation}\label{H-k}
H^1_{0,k}:=\{v\in H^1_0(\Omega) :  v(x)=v(g(x)) \hbox{ for any }x\in \Omega,  \hbox{ for every } g\in \mathcal G_{\frac {2\pi}k}\}\end{equation}
the least energy solution $u_p^k$ in $H^1_{0,k}$, that can be found
 minimizing the Energy functional associated with \eqref{LE}
\[\mathcal E(u):=\frac 12 \int_\Omega|\nabla u|^2-\frac 1{p+1}\int_\Omega |u|^{p+1}\] 
on the $k$-invariant Nehari manifold
\[\mathcal N_k:=\{u\in H^1_{0,k}:  u\neq 0, \int_\Omega|\nabla u|^2=\int_\Omega|u|^{p+1}\}\]
is nonradial for every $p>p_k$, where $p_k$ is the same as before. \\
What it lasts to be proved is that these least energy solutions $u_p^k$ cannot have more invariances, namely they cannot belong to $H^1_{0,k'}$ for $k'> k$. \\
 But this can now follows from our results. Indeed a least energy solution $u_p^k\in H^1_{0,k} $ satisfies
\[m_k(u_p^k)=1
\]
so that, by Theorem \ref{teo:1}, $u_{p}^k$ is symmetric with respect to a direction $e\in \mathcal S$ in $S_{2k,e}$ and it is strictly monotone in the angular variable in $S_{k,e}^{\pm}$ so that cannot belong to $H^1_{0,k'}$ for $k'> k$.\\
Summarizing we have:
\begin{theorem}\label{teo-6.1}
For any $k\ge 1$ there exists an exponent $p_k$ for which $u_p^k$ is non radial
for any $p>p_k$, it is symmetric with respect to a direction $e\in \mathcal S$ and it is strictly monotone in the angular variable in $S_{k,e}^{\pm}$. Moreover for $p>p_k$ $u_p^k\neq  u_p^h$ with $h<k$ so that problem \eqref{LE} admits at least $k+1$ positive different solutions.
\end{theorem}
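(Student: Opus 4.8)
The plan is to read Theorem~\ref{teo-6.1} as a direct application of Theorem~\ref{teo:1} to the $k$-invariant ground states, the only genuinely new ingredient being the passage from the strict angular monotonicity to the rigidity of the full symmetry group. First I would collect the variational facts supplied by \cite{G2}: for each $k\ge 1$ there is an exponent $p_k$ such that, for $p>p_k$, the minimizer $u_p^k$ of $\mathcal E$ on $\mathcal N_k$ exists, is positive and is nonradial, and its $k$-invariant Morse index satisfies $m_k(u_p^k)=1$. This last value is the standard Nehari fact: testing $Q_{u_p^k,\Omega}$ against $u_p^k$ itself gives $(1-p)\int_\Omega|\nabla u_p^k|^2<0$, so there is at least one $k$-invariant negative direction, while minimality on the codimension-one manifold $\mathcal N_k$ rules out a second, whence $m_k(u_p^k)=1\le 2$. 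Since $u_p^k>0$ and $f(r,s)=|s|^{p-1}s$ equals $s^p$ on $s>0$, which is convex in $s$ for $p>1$ (and only this regime is relevant for a positive solution), the hypotheses of Theorem~\ref{teo:1} are met.

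I would then simply invoke Theorem~\ref{teo:1}. From $m_k(u_p^k)\le 2$ and the convexity of $f$ we obtain the dichotomy that $u_p^k$ is either radial, or symmetric with respect to some line $r_e$ in $S_{2k,e}$ and strictly monotone in the angular variable on $S_{k,e}^{\pm}$. For $p>p_k$ the nonradiality recalled above excludes the first alternative, which already establishes the symmetry and strict monotonicity statements of the theorem.

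The central step is to convert strict angular monotonicity into the assertion that the exact rotational symmetry group of $u_p^k$ is precisely $\mathcal G_{\frac{2\pi}k}$. Fixing a radius and studying $\theta\mapsto u_p^k(r,\theta)$, this map has period $\frac{2\pi}k$, is even about $\theta=\psi$, and is strictly monotone on the two half-sectors of width $\frac\pi k$; hence it attains exactly one maximum per period (cf.\ Remark~\ref{rem:1}), i.e.\ exactly $k$ maxima over a full turn. If $u_p^k$ were invariant under some $R_{\frac{2\pi}{k'}}\notin\mathcal G_{\frac{2\pi}k}$, then $\mathrm{lcm}(k,k')>k$ and the map would admit the strictly smaller period $\frac{2\pi}{\mathrm{lcm}(k,k')}$, forcing at least two maxima inside an interval of length $\frac{2\pi}k$ --- a contradiction. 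Thus $u_p^k\notin H^1_{0,k'}$ for every $k'>k$. Comparing with $u_p^h$ for $h<k$: if $u_p^h$ is radial it differs from the nonradial $u_p^k$; if $u_p^h$ is nonradial, Theorem~\ref{teo:1} applied at level $h$ pins its exact rotational symmetry to $\mathcal G_{\frac{2\pi}h}\neq\mathcal G_{\frac{2\pi}k}$. In either case $u_p^k\neq u_p^h$.

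Finally I would assemble the multiplicity count. Replacing $p_k$ by $\max_{1\le h\le k}p_h$ if necessary, I may assume the thresholds nondecreasing, so that for $p>p_k$ the solutions $u_p^1,\dots,u_p^k$ are all nonradial with pairwise distinct exact rotational symmetry groups $\mathcal G_{\frac{2\pi}h}$, hence pairwise distinct; being nonradial, each is also distinct from the radial solution $u_p$ guaranteed by \cite{GGPS}. This produces at least $k+1$ distinct positive solutions of \eqref{LE}. The only delicate point in the whole argument is the symmetry-group rigidity of the third paragraph; everything else is bookkeeping resting on Theorem~\ref{teo:1} and the variational analysis of \cite{G2}.
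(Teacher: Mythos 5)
Your proposal is correct and follows essentially the same route as the paper: invoke \cite{G2} for the nonradiality of $u_p^k$ and the fact that $m_k(u_p^k)=1$, apply Theorem \ref{teo:1} (with convexity of $s\mapsto s^p$ on $s>0$, which suffices for positive solutions) to obtain the symmetry and strict angular monotonicity, and then use that monotonicity to exclude membership in $H^1_{0,k'}$ for $k'>k$, which yields the pairwise distinctness and the count of $k+1$ solutions. The extra material you supply --- the Nehari computation giving $m_k(u_p^k)=1$, the counting-of-maxima rigidity argument, the radial/nonradial case split for $u_p^h$, and the adjustment making the thresholds $p_k$ nondecreasing --- consists of details the paper leaves implicit, not a different method.
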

The $k+1$ solutions in Theorem \ref{teo-6.1} are given by the radial solution $u_p$, the least-energy solution $u_p^1$ (corresponding to $k=1$) which is the least-energy solution in $H^1_0(\Omega)$, the least energy solution $u_p^2\in H^1_{0,2}$ and so on until the least energy solution $u_p^k\in H^1_{0,k}$. \\

 This permits to say that the solutions found by bifurcation and the least energy solutions in the symmetric spaces $H^1_{0,k}$ possess the same symmetry and monotonicity properties in the sectors $S_{k,e}^\pm$ supporting the conjecture that indeed they are the same.\\
Finally we observe that solutions with a large number of peaks for large values of $p$ has been constructed in \cite{EMP} in a more general domain than an annulus via the Lyapunov Schmidt reduction method.  We believe that their solutions in the case of an annulus can coincide with ours $u_p^k$.

\

We are confident that a very similar result should hold also for the exponential nonlinearity, namely
\begin{equation}\label{EX}
\left\{\begin{array}{ll}
-\Delta u = \l e^u\qquad & \text{ in } \Omega, \\
u= 0 & \text{ on } \partial \Omega,
\end{array} \right.
\end{equation}
in an annulus, using the asymptotic behavior of the radial mountain pass solution $u_\l$ as $\l\to 0^+$, performed in \cite{GG}. 
From this it should follow that the Morse index of $u_\l$ converges to $+\infty$ as $\l\to 0^+$, generating infinitely many nonradial solutions that arise by bifurcation as in the previous case.
Applying the Mountain pass Theorem in the spaces $H^1_{0,k}$ then one ends with a $k$-Morse index one positive solution that cannot be radial by Morse index considerations. Then Theorem \ref{teo:1} applies and implies that solutions found in this way cannot coincide. 
%via Mountain pass Theorem in the spaces $H^1_{0,k}$ 
%using the monotonicity properties of Theorem \ref{teo:1}.\\
Solutions with a large number of peaks for small values of $\l$ has been constructed in \cite{EGP} and \cite{DPKM} via the Lyapunov Schmidt reduction method.\\
Of course a result of this type deserves a deep study that we give to the interested reader.

\

\

Let us turn to the case of nodal solutions and consider now the Lane-Emden problem \eqref{LE} in the unit ball $B$. In \cite{GI} the radial nodal least-energy solution $u_p$ is studied. In particular it is shown that the Morse index of $u_p$ changes in its existence range $p\in (1,\infty)$, corresponding to some symmetric spaces $H^1_{0,k}$ when $k=3,4$ and $5$ giving rise to some bifurcating branches of solutions which arise from the nodal radial solution $u_p$. Along these branches solutions are symmetric with respect to a direction $e\in \mathcal S$ and are monotone in the angular variable in the sectors $S_{k,e}^\pm$, by construction. Namely they present the same monotonicity properties that we proved under the $k$-invariant Morse index bound.\\
It is also shown, using the $k$-Morse index, that, for $k=3,4$ and $k=5$, there exists an exponent $p_k$ such that least-energy nodal solutions in $H^1_{0,k}$, that we denote by $\widetilde u_p^k$, are nonradial when $p>p_k$, while it seems that they are radial when $p$ is near $p=1$. \\
Moreover it is known that $\tilde u_p^1$ is nonradial for every $p>1$, see \cite{BWW} or \cite{Pacella}, and it
has also been noticed in \cite{GI}, by Morse index considerations, that $\widetilde u_p^2$ is nonradial when $p$ is near $1$ and when $p$ is large.\\
These solutions $\widetilde u_p^k$ are found minimizing the energy functional $\mathcal E(u)$
on the nodal $k$-invariant Nehari manifold 
\[ \begin{array}{rl}\mathcal N_{k, nod}:=\Big\{u\in H^1_{0,k}:&   u^+\neq 0, \ \int_B|\nabla u^+|^2=\int_B |u^+|^{p+1}, \\ &  u^-\neq 0 , \ \int_B|\nabla u^-|^2=\int_B |u^-|^{p+1}\Big\} .\end{array} \]
%\[\mathcal N_{k, nod}:=\{u\in H^1_{0,k} :  u^+\neq 0, u^-\neq 0, \int_\Omega|\nabla u^+|^2=\int_\Omega |u^+|^{p+1}\ ,\  \int_\Omega|\nabla u^-|^2=\int_\Omega |u^-|^{p+1}\}\]
($s^+$ ($s^-$) stands for  the positive (negative) part of $s$)
which has codimension $2$ by a result of \cite{BW}, so that their $k$-Morse index is exactly two, namely
\[m_k(\widetilde u_p^k)= 2.\] 
Then Theorem \ref{teo:2} applies when $p\ge2$, and hence a $k$-invariant nodal least energy solution $\widetilde u^k_p$ is symmetric with respect to a direction $e\in \mathcal S$ and strictly monotone in the angular variable in the sectors $S_{k,e}^{\pm}$, showing that $\widetilde u_p^1\neq \widetilde u_p^2\neq \widetilde u_p^3\neq \widetilde u_p^4\neq \widetilde u_p^5$ when they are nonradial and $p\ge2$. For $k=1$, since $H^1_{0,1}$ coincides with $H^1_0(B)$ then $\widetilde u_p^1$ coincide with the least energy nodal solution and hence it is foliated Schwarz symmetric for every $p$, by \cite{BWW}. Observe that the foliated Schwarz symmetry in the plane is nothing else that our $k$-invariance and $k$-monotonicity property for $k=1$.\\
%Our monotonicity result implies that nodal least energy solutions $\widetilde u_p^1, \widetilde u_p^2, \widetilde u_p^3, \widetilde u_p^4, \widetilde u_p^5$ are distinct when they are nonradial.\\
Summarizing  previous and new results we have:
\begin{theorem}\label{teo:6.2}
The solution $\widetilde u_p^1$ is nonradial for every $p>1$. %The solution $\widetilde u_p^2$ is nonradial for $p$ near $1$. 
For $k=2,3,4,5$ there exists an exponent $p_k$ for which $\widetilde u_p^k$ is non radial
for any $p>p_k$. When they are nonradial, the functions $\widetilde u_p^k$ are symmetric with respect to a direction $e\in \mathcal S$ and strictly monotone in the angular variable in $S_{k,e}^{\pm}$. Moreover $\widetilde u_p^1\neq \widetilde u_p^k$ for every $p>1$ and every $k$, and for $p>p_k$ $\widetilde u_p^k\neq  \widetilde u_p^h$ with $h<k$ so that problem \eqref{LE} admits for $p>p_k$ at least $k+1$ distinct nodal solutions.
\end{theorem}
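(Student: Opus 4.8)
The plan is to assemble the cited structural facts about the solutions $\widetilde u_p^k$ with the monotonicity conclusion of Theorem \ref{teo:2}, and then to use the strict angular monotonicity as the invariant that separates different values of $k$. The nonradiality of $\widetilde u_p^1$ for every $p>1$, together with its foliated Schwarz symmetry, is exactly the content of \cite{BWW} (or \cite{Pacella}) applied to the least energy nodal solution in $H^1_0(B)=H^1_{0,1}$, and for $k=2,3,4,5$ the existence of a threshold $p_k$ past which $\widetilde u_p^k$ is nonradial is the Morse index computation of \cite{GI}. Since $\widetilde u_p^k$ minimizes $\mathcal E$ on the nodal $k$-invariant Nehari manifold $\mathcal N_{k,nod}$, and the latter has codimension two by \cite{BW}, one has $m_k(\widetilde u_p^k)=2$. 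Finally, for $p\ge 2$ the derivative $f'(|x|,s)=p|s|^{p-1}$ is convex in $s$, so Theorem \ref{teo:2} applies: whenever $\widetilde u_p^k$ is nonradial it is symmetric with respect to some $e\in\mathcal S$ in $S_{2k,e}$ and strictly monotone in the angular variable in $S_{k,e}^{\pm}$. This settles the first three assertions.

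The core of the statement is distinctness. For $\widetilde u_p^1\neq \widetilde u_p^k$ with $k\ge 2$ and \emph{every} $p>1$ I would argue directly from the foliated Schwarz symmetry of $\widetilde u_p^1$, which holds for all $p$ and does not require $p\ge 2$. Writing $u=\widetilde u_p^1$ as a function nonincreasing in the angle $\theta\in[0,\pi]$ measured from its symmetry axis $e$, suppose $u$ were also $k$-invariant. Then invariance under $R_{2\pi/k}$ would force the value at $\theta=0$, which is the maximum over each circle, to coincide with the value at $\theta=2\pi/k\in(0,\pi]$; since $u$ is nonincreasing in $\theta$ this makes $u$ constant in $\theta$, i.e.\ radial, contradicting the nonradiality from \cite{BWW}. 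Hence $\widetilde u_p^1$ is not $k$-invariant, whereas $\widetilde u_p^k\in H^1_{0,k}$ is, so the two must differ.

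For $\widetilde u_p^k\neq \widetilde u_p^h$ with $2\le h<k$ and $p>p_k$ I would use the strict monotonicity of Theorem \ref{teo:2} as a counting invariant. By Remark \ref{rem:1} the angular derivative of a nonradial $\widetilde u_p^k$ vanishes exactly on the $2k$ rays $\theta=\psi+\tfrac{h\pi}{k}$ and keeps a constant, alternating sign in between, so it has precisely $2k$ nodal rays; equivalently, the isotropy group of $\widetilde u_p^k$ in $O(2)$ is the dihedral group of order $2k$. If $\widetilde u_p^h$ is radial it is trivially different from the nonradial $\widetilde u_p^k$, while if it is nonradial its angular derivative has $2h\neq 2k$ nodal rays; in either case the two solutions differ. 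Taking $p$ beyond all the thresholds $p_2,\dots,p_k$ (so that every $\widetilde u_p^j$ with $2\le j\le k$ is nonradial), the functions $\widetilde u_p^1,\dots,\widetilde u_p^k$ are pairwise distinct by the above and each is distinct from the radial nodal solution $u_p$ of \cite{GI}, yielding the $k+1$ distinct nodal solutions $u_p,\widetilde u_p^1,\dots,\widetilde u_p^k$.

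The step I expect to be most delicate is precisely the separation of $\widetilde u_p^1$ from the higher $\widetilde u_p^k$ in the range $1<p<2$: Theorem \ref{teo:2} produces the angular monotonicity only for $p\ge 2$, so in this low range the argument cannot rely on the monotonicity machinery of this paper and must instead exploit the independent foliated Schwarz symmetry of the global least energy nodal solution, as carried out in the second paragraph.
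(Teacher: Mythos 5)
Your proposal is correct and follows essentially the same route as the paper's own argument: the paper likewise assembles the nonradiality of $\widetilde u_p^1$ from \cite{BWW}, the thresholds $p_k$ from \cite{GI}, the bound $m_k(\widetilde u_p^k)= 2$ from the codimension-two result of \cite{BW}, and then invokes Theorem \ref{teo:2} for $p\ge 2$ to get the symmetry and strict angular monotonicity. The only difference is that you make explicit the two separation mechanisms the paper leaves implicit --- that a nonradial foliated Schwarz symmetric function cannot be $k$-invariant for $k\ge 2$ (which handles $\widetilde u_p^1\neq \widetilde u_p^k$ for all $p>1$, outside the range where Theorem \ref{teo:2} applies), and that nonradial solutions for different $k$ have different numbers of nodal rays of $u_\theta$ --- which is a welcome elaboration, not a different approach.
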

The $k+1$ in Theorem \ref{teo:6.2} solutions are given by the radial solution $u_p$, the nodal least-energy solution $\widetilde u_p^1$ (corresponding to $k=1$) which is the nodal least-energy solution in $H^1_0(\Omega)$, the nodal least-energy solution $\widetilde u_p^2\in H^1_{0,2}$ and so on until the nodal least-energy solution $\widetilde u_p^5\in H^1_{0,5}$. Finally, Morse index considerations in \cite{GI} suggest that $\widetilde u_p^k$ coincides with the radial nodal solution when $k\geq 6$.\\

As before we proved so far that  the solutions found by bifurcation and the least energy nodal solutions in the symmetric spaces $H^1_{0,k}$ possess the same symmetry and monotonicity properties in the sectors $S_{k,e}^\pm$ supporting the conjecture that indeed they are the same.\\
Solutions with this type of symmetry have been construct by Lyapunov Schmidt reduction method in \cite{EMP2}.
%{\F The construction of the solutions $\widetilde u_p^3, \widetilde u_p^4, \widetilde u_p^5$ by the Liapunov reduction method is very recently faced in \cite{GIP?}. \edz{decidere se mettere GIP?}}

\

\

Theorem \ref{teo:2} can be applied also considering the $\sinh$-Poisson problem
\begin{equation}\label{SP}
\left\{\begin{array}{ll}
-\Delta u = \e (e^u-e^{-u})
 \qquad & \text{ in } \Omega, \\
u= 0 & \text{ on } \partial \Omega,
\end{array} \right.
\end{equation}
where $\e>0$ is a small parameter when $\Omega$ is a ball or an annulus 
and the solutions change sign. In this case the known results, see \cite{BPW}, suggest that nonradial solutions can be found using our minimization procedure in the spaces $H^1_{0,k}$. We leave the interested reader to carry out this study.

\

The previous results can be applied also to positive and nodal solutions of the H\'enon problem
\begin{equation}\label{H}
\left\{\begin{array}{ll}
-\Delta u = |x|^\a |u|^{p-1}u \qquad & \text{ in } \Omega, \\
u= 0 & \text{ on } \partial \Omega,
\end{array} \right.
\end{equation}
where $\Omega$ is a ball or an annulus, 
and can be used to distinguish solutions that belong to different spaces $H^1_{0,k}$. In the paper
\cite{AG}, as an example, it has been used, when $\Omega$ is a ball, in order to obtain some multiplicity results, minimizing the energy functional associated with \eqref{H}, namely
\[\mathcal E(u):=\frac 12 \int_B|\nabla u|^2-\frac 1{p+1}\int_B|x|^\a |u|^{p+1}\] 
on the $k$-invariant Nehari manifold
\[\mathcal N_k:=\{u\in H^1_{0,k}: \int_B|\nabla u|^2=\int_B |x|^\a|u|^{p+1}\}\]
or on the $k$-invariant  nodal Nehari manifold
\[ \begin{array}{rl}\mathcal N_{k, nod}:=\Big\{u\in H^1_{0,k}:&   u^+\neq 0, \ \int_B|\nabla u^+|^2=\int_B |x|^\a|u^+|^{p+1}, \\ &  u^-\neq 0 , \ \int_B|\nabla u^-|^2=\int_B |x|^\a|u^-|^{p+1}\Big\} .\end{array} \]
%\[\mathcal N_{k, nod}:=\{u\in H^1_{0,k} : u^+\neq 0, u^-\neq 0, \int_B|\nabla u^+|^2=\int_B |x|^\a |(u^+)|^{p+1}\ ,\  \int_B|\nabla u^-|^2=\int_B|x|^\a |(u^-)|^{p+1}\}.\]
Minimizing $\mathcal E(u)$ on $\mathcal N_k$, for any $k\in \N$, $k\geq 1$, produces positive solutions that we denote by $u^k$ and satisfies
\[m_k(u^k)=1\]
 while minimizing $\mathcal E(u)$ on $\mathcal N_{k,nod}$,  for any $k\in \N$, $k\geq 1$, produces nodal solutions that we denote by $\widetilde u^k$ and satisfies
\[m_k(\widetilde u^k)= 2.\]
In \cite{AG} it has been shown, by a careful study of radial solutions, that:
\begin{theorem}[Theorem 4.2 in \cite{AG}]
Let $\a> 0$ be fixed. Then, there exists an exponent $p^\star_1(\a)>1$ such that problem \eqref{H} admits at least $j_1=1+\lceil \frac \a 2\rceil$ distinct positive solutions for every $p>p^\star_1(\a)$.
\end{theorem}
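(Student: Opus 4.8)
The plan is to exhibit the $j_1=1+\lceil\a/2\rceil$ solutions as the positive radial solution together with $\lceil\a/2\rceil$ genuinely nonradial least-energy solutions $u^k$, $k=1,\dots,\lceil\a/2\rceil$, of $\mathcal E$ on the $k$-invariant Nehari manifolds $\mathcal N_k$, and then to separate all of them by means of the dichotomy of Theorem \ref{teo:1}. Since $\Om$ is a planar bounded domain every $p>1$ is subcritical, so the embedding $H^1_{0,k}\hookrightarrow L^{p+1}(\Om)$ is compact and the bounded weight $|x|^\a$ produces no loss of compactness; hence for each $k\ge1$ the infimum of $\mathcal E$ on $\mathcal N_k$ is attained. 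By the principle of symmetric criticality of \cite{Palais} the minimizer $u^k$ is a solution of \eqref{H}, it can be taken positive by replacing it with $|u^k|$, and as a constrained least-energy critical point it has $m_k(u^k)=1$.

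Because the nonlinearity $f(|x|,s)=|x|^\a|s|^{p-1}s$ is convex in $s$ on the range $s>0$ relevant to positive solutions, Theorem \ref{teo:1} applies to $u^k$: either $u^k$ is radial, or there is a direction $e\in\mathcal S$ such that $u^k$ is symmetric with respect to $r_e$ in $S_{2k,e}$ and strictly monotone in the angular variable in each semisector $S_{k,e}^{\pm}$. The core step is therefore to exclude radiality for the small values of $k$, and the mechanism is an index comparison with the positive radial solution $u_{\rad}$: if $m_k(u_{\rad})\ge2$ then $u_{\rad}$ cannot minimize $\mathcal E$ on $\mathcal N_k$, and, since the positive radial solution is unique (by the ODE analysis recalled in \cite{AG}), the minimizer $u^k$ must be nonradial. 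Decomposing $L_{u_{\rad}}$ into the angular Fourier modes $\cos(j\theta),\sin(j\theta)$, the $k$-Morse index of $u_{\rad}$ is the number of negative eigenvalues carried by the modes $j\in\{0,k,2k,\dots\}$; the radial mode $j=0$ contributes exactly one, so it suffices that the mode $j=k$ already carries a negative eigenvalue. This is exactly where the careful asymptotic study of the radial solution as $p\to\infty$ enters: one shows, as in \cite{AG}, that there is $p^\star_1(\a)>1$ such that for every $p>p^\star_1(\a)$ each angular mode $j=1,\dots,\lceil\a/2\rceil$ contributes a negative eigenvalue to $L_{u_{\rad}}$, the threshold $\lceil\a/2\rceil$ being dictated by the weight exponent through the behaviour of the singular angular eigenvalue problem attached to the concentrating radial profile.

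Granting this spectral input, for every $k=1,\dots,\lceil\a/2\rceil$ the mode $j=k$ forces $m_k(u_{\rad})\ge2$, so $u^k$ is nonradial and, by Theorem \ref{teo:1}, symmetric about some $r_e$ and strictly monotone in $S_{k,e}^{\pm}$; consequently $u^k$ has exactly $k$ angular maxima over a full turn, whence $u^k$ is not $k'$-invariant for any $k'>k$ and, comparing numbers of angular maxima, $u^k\ne u^h$ for $h\ne k$ and $u^k\ne u_{\rad}$. Counting the radial solution together with $u^1,\dots,u^{\lceil\a/2\rceil}$ gives $1+\lceil\a/2\rceil=j_1$ distinct positive solutions for $p>p^\star_1(\a)$. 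I expect the decisive obstacle to be precisely the spectral step, that is, proving uniformly for the finitely many modes $j\le\lceil\a/2\rceil$ that a negative eigenvalue of $L_{u_{\rad}}$ appears for all large $p$, since this requires a sharp description of the limit profile of $u_{\rad}$ and of its linearized spectrum, and it is this analysis that fixes the exact value $\lceil\a/2\rceil$. The remaining ingredients, namely existence and unit index of $u^k$, the convexity needed to invoke Theorem \ref{teo:1}, and the separation of the solutions by their angular monotonicity, are comparatively routine once the spectral count is available.
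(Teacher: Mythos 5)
Your proposal is correct and follows essentially the same route as the paper: the paper (quoting \cite{AG}) obtains the $1+\lceil \frac{\a}{2}\rceil$ solutions precisely by minimizing $\mathcal E$ on the $k$-invariant Nehari manifolds $\mathcal N_k$ to produce positive solutions with $m_k(u^k)=1$, excluding radiality through the Morse index of the unique positive radial solution (the ``careful study of radial solutions'' in \cite{AG}, which is exactly your spectral count of negative eigenvalues in the angular modes $j=1,\dots,\lceil \frac{\a}{2}\rceil$ for $p$ large), and separating the resulting solutions by the symmetry and strict angular monotonicity dichotomy of Theorem \ref{teo:1}. Like the paper, you correctly identify the spectral step as the one genuine external input and defer it to \cite{AG}; the remaining ingredients you give (existence of the constrained minimizers, $m_k(u^k)=1$, convexity of $|x|^\a s^p$ on the range of a positive solution, and the incompatibility of strict monotonicity in sectors of angle $\frac{\pi}{k}$ with any extra invariance) match the paper's argument.
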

Here $\lceil t\rceil=\min\{k\in \Z : k\geq t \}$ denotes the ceiling function. 
One solution is radial while the others are not.
Further
\begin{theorem}[Theorem 4.4 in \cite{AG}]\label{teo:6.4}
Let $\a\geq 0$ be fixed. Then, there exists an exponent $p^\star_2(\a)>1$ such that problem \eqref{H} admits at least $j_2=1+\lceil \frac {2+\a} 2\kappa \rceil$ distinct nodal least energy solutions for every $p>p^\star_2(\a)$.
\end{theorem}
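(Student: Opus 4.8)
The plan is to produce, for $p$ large, one radial nodal solution together with $j_2-1$ nonradial ones, each living in a distinct symmetric class $H^1_{0,k}$, and to keep them apart through the symmetry/monotonicity dichotomy of Theorem \ref{teo:2}. For each $k\ge 1$ I would first construct the $k$-invariant least energy nodal solution $\widetilde u^k$ by minimizing $\mathcal E$ over the nodal Nehari manifold $\mathcal N_{k,nod}$; existence follows from the direct method, and since $\mathcal N_{k,nod}$ has codimension two (by \cite{BW}) the minimizer satisfies $m_k(\widetilde u^k)=2$. For the H\'enon nonlinearity $f(|x|,s)=|x|^\a|s|^{p-1}s$ one has $f'(|x|,s)=p|x|^\a|s|^{p-1}$, which is convex in $s$ exactly when $p\ge 2$; since the statement concerns $p$ large this is no restriction, and Theorem \ref{teo:2} then applies to every $\widetilde u^k$.

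The separation of the solutions is a pure symmetry matter. By Theorem \ref{teo:2} a nonradial $\widetilde u^k$ is symmetric with respect to some $r_e$ in $S_{2k,e}$ and \emph{strictly} monotone in the angular variable on each half-sector $S_{k,e}^\pm$ of amplitude $\frac\pi k$. I would use this strict monotonicity to show that the exact invariance group of $\widetilde u^k$ is $\mathcal G_{\frac{2\pi}k}$ and nothing larger: if $\widetilde u^k$ were also $k'$-invariant for some $k'>k$, then it would be invariant under $\mathcal G_{\frac{2\pi}L}$ with $L=\mathrm{lcm}(k,k')\ge 2k$, hence $\frac{2\pi}L$-periodic in $\theta$ with $\frac{2\pi}L\le\frac\pi k$, which is incompatible with strict monotonicity on a sector of amplitude $\frac\pi k$. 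Consequently two nonradial solutions $\widetilde u^k,\widetilde u^h$ with $k\ne h$ carry different exact symmetry groups, so they are distinct, and each is distinct from the radial nodal solution $u_p$.

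The quantitative core is to decide for which $k$ the solution $\widetilde u^k$ is genuinely nonradial. I would argue by Morse index: since a minimizer on $\mathcal N_{k,nod}$ has $k$-index exactly two, it is enough to prove that the radial nodal least energy solution $u_p$ satisfies $m_k(u_p)\ge 3$; then $u_p$ cannot be that minimizer, so $\mathcal E(\widetilde u^k)<\mathcal E(u_p)$, and as $u_p$ is the radial least energy nodal solution this forces $\widetilde u^k$ to be nonradial. To evaluate $m_k(u_p)$ I would separate variables in $L_{u_p}$: in polar coordinates it decomposes over the angular modes $\cos(m\theta),\sin(m\theta)$, the $k$-invariant ones being exactly those with $k\mid m$, and for each such $m$ one counts the negative eigenvalues of the associated singular radial eigenvalue problem; summing over $m\in\{0,k,2k,\dots\}$ yields $m_k(u_p)$. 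The threshold $k\le j_2-1$ then comes from the asymptotic profile of the radial nodal solution as $p\to\infty$, which fixes the largest unstable angular mode and its dependence on $\a$.

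Collecting the pieces, for $p>p^\star_2(\a)$ one obtains $u_p$ together with the nonradial $\widetilde u^1,\dots,\widetilde u^{j_2-1}$, that is $j_2$ distinct nodal solutions. The main obstacle is precisely this last step: the delicate asymptotic (blow-up) analysis of the radial nodal solution and of the angular spectrum of $L_{u_p}$ as $p\to\infty$, which is what pins down the exact count $j_2$ and the role of $\a$. By contrast, the input of Theorem \ref{teo:2} is only qualitative, guaranteeing that the candidates produced in this way are genuinely different.
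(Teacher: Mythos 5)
Your structural scaffolding is exactly the one the paper uses. In fact, the present paper does not prove this statement at all: it is quoted verbatim as Theorem 4.4 of \cite{AG}, and the paper's own contribution to it is precisely the part you reconstruct correctly, namely that the minimizers $\widetilde u^k$ on the $k$-invariant nodal Nehari manifold satisfy $m_k(\widetilde u^k)=2$, that $f'(|x|,s)=p|x|^\alpha|s|^{p-1}$ is convex in $s$ for $p\ge 2$ so that Theorem \ref{teo:2} applies, and that the resulting symmetry-plus-strict-monotonicity dichotomy keeps solutions in different classes $H^1_{0,k}$ apart. Your lcm/periodicity argument showing that a nonradial $\widetilde u^k$ cannot be $k'$-invariant for $k'>k$ is correct (and somewhat more explicit than the paper's own remark), and your deduction of nonradiality from $m_k(u_p)\ge 3$, via the strict energy inequality $\mathcal E(\widetilde u^k)<\mathcal E(u_p)$ against the radial nodal least-energy solution, is sound.

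The genuine gap is the quantitative core, which you name but leave as a black box: proving that $m_k(u_p)\ge 3$ holds, for all $p$ beyond some $p^\star_2(\alpha)$, precisely for $k$ up to the threshold $\lceil\frac{2+\alpha}{2}\kappa\rceil$. This is not a routine verification; it is the actual content of Theorem 4.4 in \cite{AG}. After the angular decomposition you describe, one must control, uniformly as $p\to\infty$, the number of angular modes $m$ for which the associated singular radial eigenvalue problem admits a negative eigenvalue, and the constant $\kappa\approx 5.1869$ cannot be produced by soft arguments: it arises from the spectral analysis of the limit problem (a singular Liouville equation obtained from the blow-up profile of the radial nodal solution) carried out in \cite{GGP}, combined in \cite{AG} with the weight $|x|^\alpha$, which is what produces the factor $\frac{2+\alpha}{2}$. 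Without that asymptotic spectral computation your argument only yields ``there exist $1+K(p,\alpha)$ distinct nodal solutions, where $K(p,\alpha)$ is the number of $k$ with $m_k(u_p)\ge 3$,'' with no specific value of $j_2$ and no explicit dependence on $\alpha$. That said, this deferral mirrors the paper's own treatment, which cites \cite{AG} for exactly this step; what the present paper adds to the theorem is the separation mechanism that you did carry out in full via Theorem \ref{teo:2}.
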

Here $\kappa \approx 5.1869$. This number $\kappa$ has been found out in \cite{GGP} in the study of nodal radial solutions to \eqref{LE} in the unit ball and indeed it is responsible of the existence of the nonradial nodal solutions $\widetilde u_p^k$ for $k=2,3,4$ and $5$. As seen in Theorem \ref{teo:6.4} it plays a role also in the case of nodal solutions to \eqref{H}, where its effects are combined with the ones of the parameter $\a$. When $\a=0$,  $\lceil \frac {2+\a} 2\kappa \rceil=5$ as said in Theorem \ref{teo:6.2}. \\
Positive solutions with this type of symmetry have been construct by the finite dimensional reduction method in \cite{EPW} for large values of $p$.\\
Nodal solutions with this type of symmetry have been construct by the finite dimensional reduction method in \cite{ZY} for large values of $p$.

\

We believe that very similar results should hold also for positive solutions with exponential nonlinearities of the H\'enon type 
\[-\Delta u=\l |x|^\a e^u \ \ \text{ in }\Omega\]
and for positive and sign changing solution of the $\sinh$-Poisson problem of H\'enon type
\[-\Delta u=\e |x|^\a(e^u-e^{-u}) \ \ \text{ in }\Omega\]
where $\l$ and $\e$ are small parameters. Note that Theorem \ref{teo:1} can be applied in the first case, while Theorem \ref{teo:2} holds in the second example.

We quote the existence results in \cite{GGN}, \cite{D1}, \cite{D2} 
%\edz{vedi se citare D'Aprile (2015), Bartolucci-Pistoia (2007), Bartsch-Pistoia-Weth}

\

We end observing that in the case of the unbounded domain $\R^2$, positive solutions to 
\[-\Delta u=|x|^\a e^u \ \ \text{ in }\R^2\]
have been classified in the famous paper \cite{PT} and they exhibit the same monotonicity properties that we have highlighted in our results.

\end{document}
\

{\taglia Let us consider now the Lane-Emden problem \eqref{LE} in an unbounded domain $\Omega=\R^2\setminus B_1(0)$. 
In \cite{GP} the unique radial least energy solution $u_p$ has been studied. Its existence range is again $(1,\infty)$ and as $p\to \infty$ the Morse index of $u_p$ converges to $+\infty$ and it can be proved that the same result holds in any symmetric space $H^1_{0,k}$. In this case we can prove this result
 \begin{theorem}
For any $k>0$ there exists an exponent $p_k$ for which denoting by $u_p^k$
the least energy solution to \eqref{LE} in the space $H^1_{0,k}$ then we have that $u_p^k$ is non radial
for any $p>p_k$. Moreover $u_p^k$ is symmetric with respect to a direction $e\in \mathcal S$ and it is strictly monotone in the angular variable in $S_{k,e}^{\pm}$.
\end{theorem}
}